\newenvironment{proof}[1][Proof]{\textbf{#1.} }{\hfill $\square$}
\newtheorem{lem}{Lemma}
\newtheorem{thm}{Theorem}
\newtheorem{defin}{Definition}
\newtheorem{rem}{Remark}
\newtheorem{prop}{Proposition}
\newtheorem{corol}{Corollary}
\newcommand{\R}{\mathbb{R}}
\newcommand{\N}{\mathbb{N}}
\newcommand{\spas}{\mathcal{S}}
\newcommand{\spam}{\mathcal{M}}
\newcommand{\spaprod}{\mathcal{B}}
\newcommand{\Prb}{\mathbb{P}}
\newcommand{\E}{\mathbb{E}}
\newcommand{\tri}{\mathcal{F}}
\newcommand{\al}{\alpha}
\newcommand{\eps}{\varepsilon}
\newcommand{\D}{\Delta}
\newcommand{\ind}{\mathbf{1}}
\newcommand{\sgn}{\mbox{sgn}}
\newcommand{\ka}{\kappa}
\title{$L^p$-Solutions for Reflected Backward Stochastic Differential Equations}
\author{Sa\"id Hamad\`ene and  Alexandre Popier \\Laboratoire de statistiques et processus\\
    Universit\'e du Maine\\
    Avenue Olivier Messiaen\\
    F-72085 Le Mans Cedex 9\\
    France}
\begin{document}

\maketitle

\begin{abstract}
This paper deals with the problem of existence and uniqueness of a solution for a backward stochastic differential equation (BSDE for short) with one reflecting barrier in the case when the terminal value, the generator and the obstacle process are $L^p$-integrable with $p\in ]1,2[$. To construct the solution we use two methods: penalization and Snell envelope. As an application we broaden the class of functions for which the related obstacle partial differential equation problem has a unique viscosity solution.
\end{abstract}

{\bf 2000 AMS subject classifications:} Primary: 60G40, 60H10,
60H30; Secondary: 60G99

{\bf Key words and phrases:} Reflected backward stochastic
differential equations ; Snell envelope ; Penalization ; Obstacle
PDE ; Viscosity solution.

\section{Introduction}

The notion of nonlinear backward stochastic differential equation (BSDE for short) was introduced by Pardoux and Peng (1990, \cite{ppeng}). A solution of this equation, associated with a terminal value $\xi$ and a {\it generator or coefficient} $f(t,\omega,y,z)$, is a couple of adapted stochastic processes $(Y_t,Z_t)_{t\leq T}$ such that:
\begin{equation}\label{standbsde}
\forall t \leq T, \quad Y_t=\xi+\int_t^Tf(s,Y_s,Z_s)ds-\int_t^TZ_sdB_s,
\end{equation} 
where $B$ is a Brownian motion and adaptation is related to the completed filtration of $B$.
\medskip

In their seminal work \cite{ppeng}, Pardoux and Peng proved the existence and uniqueness of a solution under suitable assumptions, mainly square integrability of $\xi$ and the process $(f(t,\omega,0,0))_{t\leq T}$, on the one hand, and, the Lipschitz property w.r.t. $(y,z)$ of the generator $f$, on the other hand. Since this first result, it has been widely recognized that BSDE's provide a useful framework for formulating a lot of mathematical problems such as problems in mathematical finance (see e.g.\cite{BElKa, KPQ97,KPQ, nicolerouge},... ), stochastic control and differential games (see e.g. \cite{HLP95,HLP295},...), partial differential equations (see e.g.\cite{pardoux,ppeng2},... ) and so on.
\medskip

Another types of BSDEs, the one barrier reflected BSDEs, have been introduced by El-Karoui et al. \cite{KPQ97}. In the framework of those BSDEs, one of the components of the solution is forced to stay above a given {\it barrier/obstacle} process $(L_t)_{t\leq T}$. Therefore a solution is a triple of adapted processes $(Y_t,Z_t,K_t)_{t\leq T}$ which satisfies: 
\begin{eqnarray} \label{RBSDE} 
&&Y_t = \xi + \int_t^T f(s,Y_s,Z_s) ds + K_T - K_t - \int_t^T Z_s dB_s, \quad 0 \leq t \leq T,\\ \nonumber 
&&Y_t \geq L_t, \quad 0 \leq t \leq T \mbox{ and }\int_0^T(Y_s-L_s)dK_s=0., \ P-\mbox{a.s.};
\end{eqnarray}
here the process $K$ is non-decreasing and its role is to push upwards $Y$ in order to keep it above the obstacle $L$.

These types of equations are connected with a wide range of applications especially the pricing of America options in markets constrained or not, mixed control, partial differential variational inequalities, real options (see e.g. \cite{hamdjepop, hamela,KPQ97, KPQ, hamjea,hamzha,lepeltier1},...and the references therein). Once more under square integrability of the data and Lipschitz property of the coefficient $f$, the authors of \cite{KKPPQ97} show existence and uniqueness of the solution.
\medskip

For several years there have been a lot of works which deal with the issue of existence/uniqueness results under weaker assumptions than the ones of Pardoux-Peng \cite{ppeng} or El-Karoui et al \cite{KKPPQ97}. However, for their own reasons, authors focus only on the weakness of the Lipschitz property of the coefficient and not on square integrability of the data $\xi$ and $(f(t,\omega,0,0))_{t\leq T}$. Actually there have been relatively few papers which deal with the problem of existence/uniqueness of the solution for BSDEs in the case when the coefficients are not square integrable. Nevertheless we should point out that El-Karoui et al. (1997, \cite{KPQ97}) and Briand et al. (2003, \cite{briand}) have proved existence and uniqueness of a solution for the standard BSDE (\ref{standbsde}) in the case when the data belong only to $L^p$ for some $p\in ]1,2[$. Therefore the main objective of our paper is to complete those works and to study the reflected BSDE (\ref{RBSDE}) in the case when the terminal condition $\xi$ and the generator $f$ are only $p$-integrable with $p\in ]1,2[$. The main motivation of this work is that in several applications (finance, control, games, PDEs,...) the data are not square integrable and to assume them so is somehow restrictive.

In this article we show that if $\xi$, $\sup_{t\leq T}(L_t^+)$ and $\int_0^T|f(t,0,0)|dt$ belong to $L^p$ for some $p\in ]1,2[$, then the BSDE (\ref{RBSDE}) with one reflecting barrier associated with $(f,\xi,L)$ has a unique solution. We prove existence and uniqueness of the solution in using penalization and Snell envelope of
processes methods. We finally deal with the partial differential variational inequality (PDVI in short) associated with the reflected BSDE under consideration in the case when randomness comes from a standard diffusion process. Actually we show existence and uniqueness of the solution in viscosity sense for that PDVI in some appropriate space. This result broadens the class of functions for which there exists a unique solution for this associated PDVI with obstacle.
\medskip

The paper is organized as follows: the next section contains all the notations and basic estimates. Uniqueness of the solution is also proved in this section. In Sections 3 and 4 existence is established via two different methods. The first one studied in Section 3 uses a fixed point argument for an appropriate mapping and based also on
arguments related to the Snell envelope of processes and optimal stopping. The second approximation, given in Section 4, is constructed by penalization of the constraint $Y\geq L$. Finally, in Section 5, we show that, provided the problem is formulated within a Markovian framework, the solution of the reflected BSDE provides a probabilistic representation for the unique viscosity solution of an obstacle problem for a nonlinear parabolic partial differential equation with obstacle or variational inequality. 

\section{Notations, setting of the problem and preliminary results}

\noindent Let $(\Omega ,\mathcal{F},P)$ be a fixed probability space on which is defined a standard $d$-dimensional Brownian motion $B=(B_{t})_{t\leq T}$ whose natural filtration is $({\cal F}_{t}^{0}:=\sigma \{B_{s},s\leq t\})_{t\leq T}$. We denote by $({\cal F}_{t})_{t\leq T}$ the completed filtration of $({\cal F}_{t}^{0})_{t\leq T}$ with the $P$-null sets of $ \mathcal{F}$, therefore $({\cal F}_{t})_{t\leq T}$ satisfies the usual conditions, i.e. it is right continuous and complete.

From now on stochastic processes will be defined for $t \in [0,T]$, where $T$ is a positive real constant, and will takes their values in $\R^n$ for some positive integer $n$. Finally for any $x,x' \in \R^{k}$, $|x|$ denotes the Euclidean norm of $x$ and $\langle x,x' \rangle$ the inner product.
\medskip

Next for any real constant $p\in ]1,2[$, let:

$(i)$ $\spas^{p}(\R^{n})$ be the set of $\R^n$-valued, adapted and continuous processes $\left\{ X_{t} \right\}_{t \in [0,T]}$ such that:
$$\|X \|_{\spas^{p}} = \E \left[ \sup_{t \in [0,T]} |X_{t}|^{p} \right]^{\frac{1}{p}} < + \infty.$$
The space $\spas^{p}(\R^{n})$ endowed with the norm $\| . \|_{\spas^{p}}$ is of Banach type.

$(ii)$ $\spam^{p}(\R^{n})$ be the set of predictable processes $\left\{ Z_{t} \right\}_{t \in [0,T]}$ with values in $\R^{n}$ such that:
$$\|Z \|_{\spam^{p}} = \E \left[ \left( \int_{0}^{T} |Z_{t}|^{2} dt \right)^{p/2} \right]^{ 1/p} < + \infty.$$
Once more $\spam^{p}(\R^{n})$ endowed with this norm $\|Z \|_{\spam^{p}}$ is a Banach space.  
\medskip

Now let $\spaprod^{p}$ be the space $\spas^{p}(\R) \times \spam^{p}(\R^{d})$. Let $\xi$ be an an $\R$-valued and $\tri_T$-measurable random variable and let us consider a random function $f :[0,T] \times \Omega \times \R \times \R^d \to \R$ measurable with respect to ${\cal P} \times \mathcal{B}(\R)\times \mathcal{B}(\R^d)$ where $\cal P $ denotes the $\sigma$-field of progressive subsets of $[0,T]\times \Omega$. Finally let $L:=\{L_t\}_{t \in [0,T]}$ be a continuous progressively measurable $\R$-valued process. On the items $\xi$, $L$ and $f$ we make the following assumptions:
\begin{enumerate}
\item[(H1)] $\xi \in L^{p}(\Omega)$;
\item[(H2)]
\begin{enumerate}\item[(i)] the process $\{f(t,0,0), \ 0 \leq t \leq T\}$ satisfies
    $\displaystyle \E \left( \int_0^T |f(t,0,0)| dt \right)^p < + \infty$;
    \item[(ii)] there exists a constant $\kappa$ such that:
     $$P-a.s.,\,\,|f(t,y,z)-f(t,y',z')| \leq \kappa (|y-y'|+|z-z'|), \,\forall t,y,y',z,z'.$$
    \end{enumerate}
\item[(H3)] the barrier $L$ is s.t. $L_T\leq \xi$ and $L^+ :=L\vee 0\in \spas^{p}(\R)$. 
\end{enumerate}

To begin with let us define the notion of solution of the reflected BSDE associated with the triple $(f,\xi,L)$ which we consider throughout this paper.

\begin{defin}[of $L^p$-solutions]\label{defiLpsol}:
We say that $\{(Y_t,Z_t,K_t), \ 0 \leq t \leq T \}$ is a $L^p$-solution of the reflected BSDE with one continuous lower reflecting barrier $L$, terminal condition $\xi$ and generator $f$ if the followings hold:
\begin{enumerate}
\item $\{(Y_t,Z_t), \ 0 \leq t \leq T \}$ belongs to $\spaprod^{p}$;
\item $K=\{K_t, \ 0 \leq t \leq T\}$ is an adapted continuous non decreasing process s.t. $K_0=0$ and $K_T \in L^p(\Omega)$;
\item $\displaystyle Y_t = \xi + \int_t^T f(s,Y_s,Z_s) ds + K_T - K_t - \int_t^T Z_s dB_s$, $0 \leq t \leq T$ a.s.;
\item $Y_t \geq L_t$, $0 \leq t \leq T$;
\item $\displaystyle \int_0^T (Y_s - L_s) dK_s =0$, P-a.s..
\end{enumerate}
\end{defin}
\medskip

The following corollary whose proof is given in \cite{briand} will be used several times later, therefore for the sake completeness we recall it.
\begin{corol}[Cor.2.3 in \cite{briand}] \label{corol_briand}
Assume that $(Y,Z)\in {\cal B}^p$ is a solution of the following BSDE:
$$Y_t = \xi + \int_t^T \tilde{f}(t,Y_s,Z_s) ds +A_T-A_t- \int_t^T Z_s dB_s,\,\,t\leq T$$
where:

$(i)$ $\tilde{f}$ is a function which satisfies the same assumptions as $f$

$(ii)$ P-$a.s.$ the process $(A_t)_{t\leq T}$ is of bounded variation type.
\medskip

\noindent Then for any $0 \leq t \leq u \leq T$ we have:
\begin{eqnarray*}
&&|Y_t|^p + c(p) \int_t^u |Y_s|^{p-2} \ind_{Y_s \neq 0} |Z_s|^2 ds \\
&& \quad \leq |Y_u|^p+ p \int_t^u |Y_s|^{p-1} \hat{Y}_s dA_s +p \int_t^u |Y_s|^{p-1} \hat{Y}_s \tilde{f}(s,Y_s,Z_s) ds- p \int_t^u |Y_s|^{p-1} \hat{Y}_s Z_s dB_s,
\end{eqnarray*}
where $c(p)=\frac{p(p-1)}{2}$ and $\hat{y} = \frac{y}{|y|} \ind_{y \neq 0}$.
\end{corol}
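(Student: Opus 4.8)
The function $x \mapsto |x|^{p}$ is $C^{1}$ but fails to be twice differentiable at the origin when $p \in ]1,2[$, so the plan is to regularize it, apply It\^o's formula to the smooth approximation, and only then pass to the limit. Concretely, I would set $u_{\eps}(x) = (x^{2} + \eps^{2})^{p/2}$ for $\eps > 0$, which is smooth, and record its derivatives
$$u_{\eps}'(x) = p\, x\,(x^{2}+\eps^{2})^{\frac{p-2}{2}}, \qquad u_{\eps}''(x) = p\,(x^{2}+\eps^{2})^{\frac{p-4}{2}}\big[(p-1)x^{2}+\eps^{2}\big].$$
Since $p > 1$, one has $u_{\eps}'' \geq 0$ (this convexity is exactly what will force the final inequality to go the right way), and the elementary bound $|u_{\eps}'(x)| \leq p|x|^{p-1}$ holds uniformly in $\eps$.

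Next I would read off the dynamics of $Y$ from the equation, namely $dY_{s} = -\tilde{f}(s,Y_{s},Z_{s})\,ds - dA_{s} + Z_{s}\,dB_{s}$, the quadratic variation being $d\langle Y\rangle_{s} = |Z_{s}|^{2}\,ds$ because $A$ is of bounded variation and does not contribute. Applying It\^o's formula to $u_{\eps}(Y_{s})$ on $[t,u]$ and moving the nonnegative second-order term to the left gives, for each fixed $\eps$,
$$u_{\eps}(Y_{t}) + \tfrac12\int_{t}^{u} u_{\eps}''(Y_{s})|Z_{s}|^{2}\,ds = u_{\eps}(Y_{u}) + \int_{t}^{u} u_{\eps}'(Y_{s})\tilde{f}(s,Y_{s},Z_{s})\,ds + \int_{t}^{u} u_{\eps}'(Y_{s})\,dA_{s} - \int_{t}^{u} u_{\eps}'(Y_{s})Z_{s}\,dB_{s}.$$

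The final step is to let $\eps \to 0$. Pointwise one has $u_{\eps}(x) \to |x|^{p}$, $u_{\eps}'(x) \to p|x|^{p-1}\hat{x}$, and $\tfrac12 u_{\eps}''(x) \to c(p)|x|^{p-2}\ind_{x\neq 0}$. For the terminal term, the drift term and the $dA$ term, the uniform domination $|u_{\eps}'| \leq p|\cdot|^{p-1}$ together with the integrability furnished by $Y\in\spas^{p}$ lets me invoke dominated convergence, so all three converge to the corresponding terms in the statement. Because $u_{\eps}'' \geq 0$, I would apply Fatou's lemma to the second-order integral, which yields $\liminf_{\eps}\tfrac12\int_{t}^{u} u_{\eps}''(Y_{s})|Z_{s}|^{2}\,ds \geq c(p)\int_{t}^{u}|Y_{s}|^{p-2}\ind_{Y_{s}\neq 0}|Z_{s}|^{2}\,ds$; this one-sided bound is precisely the origin of the $\leq$ in the conclusion (note that on $\{Y_{s}=0\}$ the integrand may blow up, but the claimed lower bound is $0$ there, so Fatou handles this set painlessly).

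The delicate point, and the one I expect to cost the most work, is the convergence of the stochastic integral $\int_{t}^{u} u_{\eps}'(Y_{s})Z_{s}\,dB_{s}$, since with $p<2$ there need not be enough integrability for a clean $L^{2}$ argument. Instead I would argue via convergence in probability: the integrand difference is dominated by $2p|Y_{s}|^{p-1}$, so
$$\int_{t}^{u} \big|u_{\eps}'(Y_{s}) - p|Y_{s}|^{p-1}\hat{Y}_{s}\big|^{2}\,|Z_{s}|^{2}\,ds \leq 4p^{2}\Big(\sup_{s\leq T}|Y_{s}|\Big)^{2(p-1)}\int_{0}^{T}|Z_{s}|^{2}\,ds,$$
whose right-hand side is finite almost surely because $Y\in\spas^{p}$ has a.s. bounded continuous paths and $Z\in\spam^{p}$ gives $\int_{0}^{T}|Z_{s}|^{2}\,ds < \infty$ a.s. Dominated convergence then sends the left-hand bracket to $0$ a.s., and the standard continuity of the It\^o integral with respect to its integrand (in the ucp sense) yields convergence of $\int_{t}^{u} u_{\eps}'(Y_{s})Z_{s}\,dB_{s}$ in probability to $p\int_{t}^{u}|Y_{s}|^{p-1}\hat{Y}_{s}Z_{s}\,dB_{s}$. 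Collecting all the limits in the displayed identity — the right-hand side converging and the left-hand side bounded below by Fatou — delivers exactly the asserted inequality.
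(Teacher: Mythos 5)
The paper does not prove this corollary at all: it is quoted from Briand et al.\ \cite{briand}, and your argument --- regularizing $|x|^p$ by $u_\eps(x)=(x^2+\eps^2)^{p/2}$, applying It\^o's formula, using Fatou's lemma on the nonnegative second-order term to get the one-sided bound at $\{Y_s=0\}$, dominated convergence for the finite-variation and drift terms, and convergence in probability of the stochastic integrals via the a.s.\ domination $|u_\eps'|\leq p|\cdot|^{p-1}$ --- is precisely the proof given in that reference (their Lemma 2.2 and Corollary 2.3). Your proposal is correct and takes essentially the same route as the source the paper defers to.
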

\medskip

We are now going to focus on uniqueness of the $L^p$-solution of the BSDE associated with $(f,\xi,L)$. However we first provide some estimates, on the one hand, on the bounds of the solution w.r.t. the data $(f,\xi,L)$, and, on the other hand, on the variation of solutions of those BSDEs induced by a variations of the data. Actually we have:

\begin{lem} \label{estimate_on_Z}
Assume that $(Y,Z)\in {\cal B}^p$ is a solution of the following BSDE:
$$Y_t = \xi + \int_t^T f(t,Y_s,Z_s) ds +K_T-K_t- \int_t^T Z_s dB_s,\,\,t\leq T$$
where P-$a.s.$ the process $(K_t)_{t\leq T}$ is continuous non decreasing, with $K_0=0$. There exists a real constant $C_{p,\kappa}$ depending only on $p$ and $\kappa$ such that:
$$\E \left[ \left( \int_0^T |Z_s|^2 ds \right)^{p/2} \right] \leq C_{p,\kappa} \E \left[ \sup_{t \in [0,T]} |Y_t|^p + \left(\int_0^T |f(s,0,0)| ds \right)^p  \right].$$
\end{lem}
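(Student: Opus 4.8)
The plan is to apply It\^o's formula to $|Y_t|^2$ on $[0,T]$, read off an explicit expression for $\int_0^T|Z_s|^2\,ds$, and then transfer the estimate to the $L^{p/2}$ scale. Writing $Y^\ast=\sup_{t\in[0,T]}|Y_t|$ and $F=\int_0^T|f(s,0,0)|\,ds$, It\^o's formula between $0$ and $T$ gives, since $Y_T=\xi$,
\begin{equation*}
\int_0^T|Z_s|^2\,ds=|\xi|^2-|Y_0|^2+2\int_0^T Y_sf(s,Y_s,Z_s)\,ds+2\int_0^T Y_s\,dK_s-2\int_0^T Y_sZ_s\,dB_s.
\end{equation*}
I would use $|\xi|=|Y_T|\le Y^\ast$ to dominate the first two terms by $(Y^\ast)^2$.

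Next I would estimate the drift and increasing-process terms. The Lipschitz assumption (H2)(ii) gives $|f(s,Y_s,Z_s)|\le|f(s,0,0)|+\ka(|Y_s|+|Z_s|)$, so $2\int_0^T Y_sf(s,Y_s,Z_s)\,ds$ is bounded by $2Y^\ast F+C_\ka T(Y^\ast)^2$ plus a term $2\ka\int_0^T|Y_s||Z_s|\,ds$ that Young's inequality splits into $\tfrac14\int_0^T|Z_s|^2\,ds$ (absorbed on the left) and a multiple of $(Y^\ast)^2$. The delicate term is $2\int_0^T Y_s\,dK_s\le 2Y^\ast K_T$, since $K_T$ must not appear on the right-hand side. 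I would eliminate it by solving the equation at $t=0$, namely $K_T=Y_0-\xi-\int_0^T f(s,Y_s,Z_s)\,ds+\int_0^T Z_s\,dB_s$, whence $K_T\le 2Y^\ast+F+\ka TY^\ast+\ka\sqrt{T}\big(\int_0^T|Z_s|^2\,ds\big)^{1/2}+\big|\int_0^T Z_s\,dB_s\big|$. Multiplying by $2Y^\ast$ and applying Young once more, the $(\int|Z|^2)^{1/2}$ contribution is again absorbed, leaving a pathwise bound of the form $\tfrac12\int_0^T|Z_s|^2\,ds\le C_{p,\ka}\big((Y^\ast)^2+Y^\ast F+Y^\ast|N_T|\big)-2\int_0^T Y_sZ_s\,dB_s$, where $N_T=\int_0^T Z_s\,dB_s$.

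It remains to pass to the $L^{p/2}$ scale. As $p/2\in(\tfrac12,1)$, I would raise the inequality to the power $p/2$, use subadditivity $(a+b)^{p/2}\le a^{p/2}+b^{p/2}$ (bounding the signed martingale term by its absolute value), and take expectations. The $(Y^\ast)^p$ terms are already of the right shape, and $\E[(Y^\ast F)^{p/2}]\le\tfrac12\E[(Y^\ast)^p]+\tfrac12\E[F^p]$ by Young. For the stochastic-integral terms I would invoke the Burkholder--Davis--Gundy inequality, $\E[|\int_0^T Y_sZ_s\,dB_s|^{p/2}]\le c\,\E[((Y^\ast)^2\int_0^T|Z_s|^2\,ds)^{p/4}]$ and $\E[|N_T|^p]\le c_p\,\E[(\int_0^T|Z_s|^2\,ds)^{p/2}]$; in each case a further Young inequality with a small parameter $\delta$ produces a term $\delta\,\E[(\int_0^T|Z_s|^2\,ds)^{p/2}]$ that is absorbed on the left. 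Choosing $\delta$ small enough yields the claimed estimate with a constant $C_{p,\ka}$.

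The main obstacle is twofold, and both difficulties originate from the increasing process combined with the low integrability. First, $\int_0^T Y_s\,dK_s$ cannot be controlled directly; the substitution of $K_T$ from the equation is the crucial step, converting the obstruction into manageable quantities at the cost of reintroducing $\int_0^T|Z_s|^2\,ds$, which must then be absorbed. Second, since $p<2$ only guarantees $\E[(\int_0^T|Z_s|^2\,ds)^{p/2}]<\infty$ (because $(Y,Z)\in\spaprod^{p}$), the stochastic integrals need not be true martingales; I would therefore run the computation along a localizing sequence of stopping times $\tau_n$, apply It\^o's formula and BDG on $[0,\tau_n]$, and pass to the limit by Fatou's lemma, using the finiteness furnished by $(Y,Z)\in\spaprod^{p}$ and (H2)(i) to make every absorption legitimate.
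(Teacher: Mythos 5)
Your proposal is correct and follows essentially the same route as the paper: It\^o's formula applied to $|Y|^2$ along a localizing sequence, the crucial substitution of $K$ from the equation itself to eliminate $\int_0^\cdot Y_s\,dK_s$ (with the reintroduced $\int_0^T|Z_s|^2ds$ absorbed by a small parameter), passage to the $L^{p/2}$ scale via subadditivity, BDG plus Young with a small constant, and Fatou. The only cosmetic differences are that the paper carries an exponential weight $e^{\al s}$ to dispose of the $\int|Y_s|^2ds$ term and applies Young to get $\eps K_{\tau_k}^2$ before substituting, where you bound that term by $T(Y^\ast)^2$ and substitute $K_T$ linearly; both are equivalent.
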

\begin{proof}
Let $\al$ be a real constant and for each integer $k$ let us define:
$$\tau_k =\inf  \left\{ t \in [0,T], \ \int_0^t |Z_s|^2 ds \geq k \right\} \wedge T.$$
The sequence $(\tau_k)_{k\geq 0}$ is of stationary type since the process $Z$ belongs to ${\cal M}^p$ and then $\int_0^T |Z_s|^2 ds<\infty,\,\,P-a.s.$. Next using It\^o's formula yields:
\begin{eqnarray*}
&& |Y_0|^2 + \int_0^{\tau_k}  e^{\al s}|Z_s|^2 ds = e^{\al \tau_k}|Y_{\tau_k}|^2 + \int_0^{\tau_k} e^{\al s}Y_s (2f(s,Y_s,Z_s)-\al Y_s) ds \\
&& \qquad + 2 \int_0^{\tau_k} e^{\al s} Y_s dK_s - 2 \int_0^{\tau_k} e^{\al s} Y_s Z_s dB_s \\
&& \leq e^{\al \tau_k}|Y_{\tau_k}|^2 + \int_0^{\tau_k} e^{\al s}\{2|Y_s f(s,0,0)|+2\ka|Y_s|^2 +2\kappa|Y_s||Z_s| -\al |Y|^2_s\}ds\\
&& \qquad + 2 \int_0^{\tau_k} e^{\al s}Y_s dK_s - 2 \int_0^{\tau_k} e^{\al s}Y_s Z_s dB_s \\
&& \leq e^{\al \tau_k}|Y_{\tau_k}|^2 + 2\sup_{s\leq \tau_k}e^{\al s}|Y_s|\times \int_0^{\tau_k} |f(s,0,0)| ds \\
&& \qquad + (2\ka+\kappa \eps^{-1}-\al) \int_0^{\tau_k} e^{\al s}|Y_s|^2 ds + \eps \kappa \int_0^{\tau_k}  e^{\al s}|Z_s|^2 ds \\
&& \qquad + 2 \int_0^{\tau_k} e^{\al s}Y_s dK_s - 2 \int_0^{\tau_k} e^{\al s}Y_s Z_s dB_s
\end{eqnarray*}
for any $\eps >0$. Therefore
\begin{eqnarray*}
&&|Y_0|^2 + (1-\eps \kappa) \int_0^{\tau_k} e^{\al s}|Z_s|^2 ds \leq (e^{\al \tau_k}|Y_{\tau_k}|^2 +(1+\frac{1}{\eps}) \sup_{s\leq \tau_k}e^{2\al s}|Y_s|^2) \\
&& \qquad +\left( \int_0^{\tau_k} |f(s,0,0)| ds \right)^2 + (2\ka+\kappa \eps^{-1}-\al) \int_0^{\tau_k} e^{\al s}|Y_s|^2 ds \\
&& \qquad + \eps K_{\tau_k}^2-2 \int_0^{\tau_k} e^{\al s}Y_s Z_s dB_s.
\end{eqnarray*}
But there exists a constant $C_\kappa$ such that:
\begin{eqnarray*}
K_{\tau_k}^2 & \leq & C_\kappa \left( |Y_0|^2 + |Y_{\tau_k}|^2 + \left(\int_0^{\tau_k} |f(s,0,0)| ds \right)^2 + \int_0^{\tau_k} |Y_s|^2 ds \right. \\
& & \left. + \int_0^{\tau_k} |Z_s|^2 ds + \left| \int_0^{\tau_k} Z_s dB_s \right|^2 \right).
\end{eqnarray*}
Plugging this last inequality in the previous one to get:
\begin{eqnarray*}
&&(1-\eps C_\kappa)|Y_0|^2 + (1-\eps \ka)\int_0^{\tau_k} e^{\al s}|Z_s|^2 ds -\eps C_\kappa \int_0^{\tau_k} |Z_s|^2 ds \\
&& \qquad \leq \{(\eps C_\ka +e^{\al \tau_k})|Y_{\tau_k}|^2 +(1+\frac{1}{\eps}) \sup_{s\leq \tau_k}e^{2\al s}|Y_s|^2\}+\eps C_\kappa \left| \int_0^{\tau_k} Z_s dB_s \right|^2 \\
&& \qquad \qquad + 2 \left| \int_0^{\tau_k} e^{\al s}Y_s Z_s dB_s \right| +(2\ka+\kappa \eps^{-1}-\al) \int_0^{\tau_k} e^{\al s}|Y_s|^2 ds \\
&& \qquad \qquad +(1+\eps C_\kappa) (\int_0^{\tau_k} |f(s,0,0)|ds)^2.
\end{eqnarray*}
Choosing now $\eps$ small enough and $\al$ such that $2\ka+\kappa \eps^{-1}-\al<0$ we obtain:
\begin{eqnarray*}
\E \left( \int_0^{\tau_k} |Z_s|^2 ds \right)^{p/2} & \leq & C(\kappa,p) \left( \left( \int_0^{\tau_k} |f(s,0,0)| ds \right)^p + \sup_{t \in [0,T]} |Y_t|^p  \right) \\
&& + C(\kappa,p) \left| \int_0^{\tau_k} e^{\al s}Y_s Z_s dB_s \right|^{p/2}.
\end{eqnarray*}
Next thanks to BDG's inequality we have:
\begin{eqnarray*}
\E \left[  \left| \int_0^{\tau_k} e^{\al s}Y_s Z_s dB_s \right|^{p/2} \right] & \leq & \bar C_p \E \left[  \left( \int_0^{\tau_n} |Y_s|^2 |Z_s|^2 ds \right)^{p/4} \right]  \\
& \leq & \bar C_p \E \left[  \left( \sup_{t \in [0,T]} |Y_t| \right)^{p/2}  \left( \int_0^{\tau_k} |Z_s|^2 ds \right)^{p/4} \right] \\
& \leq & \frac{\bar C_p^2}{\eta} \E \left[ \sup_{t \in [0,T]} |Y_t|^p \right] + \eta \E  \left( \int_0^{\tau_k} |Z_s|^2 ds \right)^{p/2}.
\end{eqnarray*}
Finally plugging the last inequality in the previous one, choosing $\eta$ small enough and finally using Fatou's Lemma to obtain the desired result.
\end{proof}
\bigskip

We will now establish an estimate for the process $Y$. Actually we have:
\begin{lem} \label{estimate_on_Y}
We keep the notations of Lemma \ref{estimate_on_Z} and we assume moreover that $P$-a.s. $\int_0^T (Y_s-K_s)^+dK_s = 0$. Then there exists a constant $C_{p,\ka}$ such that:
$$\E \sup_{t \in [0,T]} |Y_t|^p \leq C_{\ka,p} \left[ \E |\xi|^p + \E( \int_0^T |f(s,0,0)|ds)^p +  \E \left(\sup_{t \in [0,T]} (L_s^+)^{p} \right) \right].$$
\end{lem}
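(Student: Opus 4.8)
The plan is to run It\^o's formula on the weighted power $e^{\al t}|Y_t|^p$ and then to close the estimate by absorbing every term in which $Y$, $Z$ or $K$ reappears on the right-hand side. Applying Corollary \ref{corol_briand} with $A=K$ and inserting the weight $e^{\al\cdot}$, I would write, for $\al>0$ to be fixed and $t\le T$,
\begin{align*}
&e^{\al t}|Y_t|^p + c(p)\int_t^T e^{\al s}|Y_s|^{p-2}\ind_{Y_s\neq 0}|Z_s|^2\,ds + \al\int_t^T e^{\al s}|Y_s|^p\,ds \\
&\quad\le e^{\al T}|\xi|^p + p\int_t^T e^{\al s}|Y_s|^{p-1}\hat{Y}_s\,dK_s + p\int_t^T e^{\al s}|Y_s|^{p-1}\hat{Y}_s f(s,Y_s,Z_s)\,ds - p\int_t^T e^{\al s}|Y_s|^{p-1}\hat{Y}_s Z_s\,dB_s .
\end{align*}
By (H2)(ii) the generator term is bounded by $|Y_s|^{p-1}|f(s,0,0)|+\ka|Y_s|^p+\ka|Y_s|^{p-1}|Z_s|$; I would split the cross term $\ka|Y_s|^{p-1}|Z_s|$ with Young's inequality into a small multiple of $|Y_s|^{p-2}\ind_{Y_s\neq0}|Z_s|^2$ (absorbed into the $Z$-energy on the left) plus a multiple of $|Y_s|^p$, and then take $\al$ large, depending only on $p$ and $\ka$, so that every $\int e^{\al s}|Y_s|^p\,ds$ term is swallowed by the $\al$-drift on the left --- exactly the device used in Lemma \ref{estimate_on_Z}.

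For the reflection term I would use the flat-off hypothesis, which forces $Y_s\le L_s$ on the support of $dK$. Since $y\mapsto|y|^{p-1}\sgn(y)$ is non-decreasing on $\R$ for $1<p<2$, this yields $|Y_s|^{p-1}\hat{Y}_s\le (L_s^+)^{p-1}$ $dK$-a.e., hence $p\int_t^T e^{\al s}|Y_s|^{p-1}\hat{Y}_s\,dK_s\le p\,e^{\al T}\sup_{s\le T}(L_s^+)^{p-1}K_T$. After these reductions the inequality takes the schematic form $e^{\al t}|Y_t|^p+c\int_t^T e^{\al s}|Y_s|^{p-2}\ind_{Y_s\neq0}|Z_s|^2\,ds\le e^{\al T}\big[|\xi|^p+p\sup(L^+)^{p-1}K_T+p\sup|Y|^{p-1}\int_0^T|f(s,0,0)|\,ds\big]-p\int_t^T e^{\al s}|Y_s|^{p-1}\hat{Y}_s Z_s\,dB_s$.

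From this single inequality I would extract two facts. Taking $t=0$ and expectations bounds $V:=\E\int_0^T|Y_s|^{p-2}\ind_{Y_s\neq0}|Z_s|^2\,ds$; taking $\sup_{t\le T}$, then expectations, and applying the Burkholder--Davis--Gundy inequality to the stochastic integral bounds $U:=\E\sup_{t\le T}|Y_t|^p$. The BDG term I would control by $C\,\E\big[(\sup|Y|^p)^{1/2}(\int_0^T|Y_s|^{p-2}\ind_{Y_s\neq0}|Z_s|^2\,ds)^{1/2}\big]\le \eps U+C_\eps V$, and the two cross terms by Young's inequality, namely $\sup(L^+)^{p-1}K_T\le C_\eps\sup(L^+)^p+\eps K_T^p$ and $\sup|Y|^{p-1}\int_0^T|f(s,0,0)|\,ds\le \eps\sup|Y|^p+C_\eps(\int_0^T|f(s,0,0)|\,ds)^p$. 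It remains to bound $\E K_T^p$: writing $K_T=Y_0-\xi-\int_0^T f(s,Y_s,Z_s)\,ds+\int_0^T Z_s\,dB_s$ and using (H2), BDG and Cauchy--Schwarz gives $\E K_T^p\le C\big[U+\E|\xi|^p+\E(\int_0^T|f(s,0,0)|\,ds)^p+\E(\int_0^T|Z_s|^2\,ds)^{p/2}\big]$, and Lemma \ref{estimate_on_Z} turns the last term into $C(U+\mathrm{data})$, so that $\E K_T^p\le C(U+\mathrm{data})$, where $\mathrm{data}$ denotes the right-hand side of the claimed bound.

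The main obstacle is the final absorption: after substitution both $U$ and $V$ reappear on the right, the offending coefficients being either a small $\eps$ or a product $C_\eps\cdot\eps$. The order of choices matters. I would first fix the BDG parameter $\eps$ (which fixes the large constant $C_\eps$ multiplying $V$), and only afterwards choose the Young parameters in the estimates of $V$ and of $\E K_T^p$ small enough that, once substituted, the total coefficient of $U$ on the right does not exceed $1/2$. With this ordering the $U$- and $V$-contributions are absorbed into the left-hand side and one obtains $U\le C_{\ka,p}\big[\E|\xi|^p+\E(\int_0^T|f(s,0,0)|\,ds)^p+\E\sup_{t\le T}(L_s^+)^p\big]$, which is the assertion.
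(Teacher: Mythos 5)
Your proof is correct and its skeleton coincides with the paper's: Corollary \ref{corol_briand} with an exponential weight, Young's inequality to push $\ka|Y_s|^{p-1}|Z_s|$ into the $|Y_s|^{p-2}\ind_{Y_s\neq0}|Z_s|^2$ energy term, the flat-off condition together with the monotonicity of $y\mapsto|y|^{p-1}\sgn(y)$ to dominate the $dK$-integral by $\sup_{s\le T}(L_s^+)^{p-1}K_T$, a moment bound on $K_T$ fed back through Young's inequality, and a final BDG-plus-absorption step with the parameters fixed in exactly the order you describe. The one genuinely different sub-step is the bound on $\E[K_T^p]$: you isolate $K_T$ from the equation and apply BDG directly, using $\E\left|\int_0^TZ_s\,dB_s\right|^p\leq C\,\E\left(\int_0^T|Z_s|^2\,ds\right)^{p/2}$, which is legitimate since $p>1$, and then remove the residual $Z$-term with Lemma \ref{estimate_on_Z}. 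The paper instead writes $\E[(K_T-K_t)^p]=p\,\E\int_t^T(K_T-K_s)^{p-1}\,dK_s$, applies Jensen's conditional inequality (exploiting $p-1<1$) and Doob's maximal inequality, so that the martingale increment is suppressed by conditioning rather than by BDG; both routes land on the same estimate (\ref{eq2-04-03}) and both ultimately rely on Lemma \ref{estimate_on_Z}, yours being the more elementary of the two. The only point worth adding is the standard justification that the local martingale $\int_0^{\cdot}|Y_s|^{p-1}\hat{Y}_sZ_s\,dB_s$ is a true martingale before you set $t=0$ and take expectations; the BDG computation you already perform for the sup estimate supplies exactly this.
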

\begin{proof}
From Corollary \ref{corol_briand} for any $\al \in \R$ and any $0 \leq t \leq u \leq T$ we have:
\begin{eqnarray*}
&& e^{\al p t} | Y_t|^p + c(p) \int_t^u e^{\al ps } |Y_s|^{p-2} \ind_{Y_s \neq 0} |Z_s|^2 ds   \\ \nonumber 
&& \quad \leq e^{\al p u} |Y_u|^p - p \int_t^u \al e^{\al p s } | Y_s|^p ds + p \int_t^u e^{\al p s} |Y_s|^{p-1} \sgn(Y_s) f(s,Y_s,Z_s) ds  \\ \nonumber 
&& \qquad +p \int_t^u e^{\al p s} |Y_s|^{p-1} \sgn(Y_s) dK_s - p \int_t^u e^{\al p s} |Y_s|^{p-1} \sgn(Y_s) Z_s dB_s
\end{eqnarray*}
where $\sgn(y) := \frac{y}{|y|} \ind_{y \neq 0}$. But since $f$ is Lipschitz then we have:
\begin{eqnarray*}
&& e^{\al p t} | Y_t|^p + c(p) \int_t^u e^{\al ps } |Y_s|^{p-2} \ind_{Y_s \neq 0} |Z_s|^2 ds   \\
&& \quad \leq e^{\al p u} |Y_u|^p + p(\ka - \al) \int_t^u e^{\al p s}  |Y_s|^p ds + p \int_t^u e^{\al p s} |Y_s|^{p-1}  |f(s,0,0)| ds  \\
&& \qquad + p\kappa \int_t^u e^{\al p s} |Y_s|^{p-1}  |Z_s| ds  +p \int_t^u e^{\al p s} |Y_s|^{p-1} \sgn(Y_s) dK_s \\
&& \qquad - p \int_t^u e^{\al p s} |Y_s|^{p-1} \sgn(Y_s) Z_s dB_s.
\end{eqnarray*}
As we have
$$p\kappa  |Y_s|^{p-1}|Z_s| \leq \frac{p\kappa^2}{(p-1)} |Y_s|^p + \frac{c(p)}{2} |Y_s|^{p-2} \ind_{Y_s \neq 0} |Z_s|^2,$$
and by Young's inequality it holds true that:
$$p \int_t^u e^{\al ps}|Y_s|^{p-1}|f(s,0,0)|ds \leq (p-1)\gamma^{\frac{p}{p-1}}(\sup_{t\leq s\leq u}|Y_s|^{p})+ \gamma^{-p}(\int_t^ue^{\al ps}|f(s,0,0)|ds)^p $$
for any $\gamma >0$. Then plug the two last inequalities in the previous ones to obtain:
\begin{eqnarray*}
&& e^{\al p t} | Y_t|^p + \frac{c(p)}{2} \int_t^u e^{\al ps } |Y_s|^{p-2} \ind_{Y_s \neq 0} |Z_s|^2 ds   \\
&& \quad \leq e^{\al p u} |Y_u|^p + (p-1)\gamma^{\frac{p}{p-1}}(\sup_{t\leq s\leq u}|Y_s|^{p})+ \gamma^{-p} \left( \int_t^ue^{\al ps}|f(s,0,0)|ds \right)^p  \\
&& \qquad + p\left( \ka + \frac{\kappa^2}{(p-1)}  - \al \right) \int_t^u e^{\al p s }  | Y_s|^p ds \\
&& \qquad  +p \int_t^u e^{\al p s} |Y_s|^{p-1} \sgn(Y_s) dK_s - p \int_t^u e^{\al p s} |Y_t|^{p-1} \sgn(Y_s) Z_s dB_s.
\end{eqnarray*}
Next let us deal with $\int_t^u e^{\al p s} |Y_s|^{p-1} \sgn(Y_s) dK_s$. Indeed the hypothesis related to increments of $K$ and $Y-L$ implies that $dK_s=1_{[Y_s\leq L_s]}dK_s$, for any $s\leq T$. Therefore we have:
$$\int_t^u e^{\al p s} | Y_s|^{p-1} \sgn( Y_s) d K_s = \int_t^u e^{\al p s}  | Y_s|^{p-1} \sgn( Y_s) 1_{[Y_s\leq L_s]}dK_s \leq \int_t^u e^{\al p s} \theta(L_s) dK_s$$
where $\theta:x\in \R \mapsto \theta(x)=|x|^{p-1}\frac{x}{|x|}1_{[x\neq 0]}$ which is actually a non-decreasing function. It follows that:
\begin{eqnarray*}
&&\int_t^u e^{\al p s} |Y_s|^{p-1} \sgn(Y_s) dK_s \leq \int_t^u e^{\al p s} |L_s|^{p-1} \sgn(L_s) dK_s\\
&& \quad \leq \int_t^u e^{\al p s} (L_s^+)^{p-1} dK_s \leq \left(  \sup_{t \in [0,T]} (L_s)^+ \right)^{p-1} \int_t^u e^{\al p s} dK_s \\
&& \quad \leq  \frac{(p-1)}{p} \frac{1}{\eps^{\frac{p}{p-1}}} \left( \sup_{t \in [0,T]} ((L_s)^+)^{p} \right) + \frac{1}{p} \eps^{p} \left( \int_t^u e^{\al p s} dK_s \right)^p.
\end{eqnarray*}
for any $\eps > 0$. Therefore choosing $\al$ such that 
$$\ka + \frac{\kappa^2}{p-1} \leq \al$$
then put $u=T$ and taking expectation to obtain,
\begin{eqnarray} \label{eq1-04-03}
&& e^{\al p t} | Y_t|^p + \frac{c(p)}{2} \int_t^T e^{\al ps } |Y_s|^{p-2} \ind_{Y_s \neq 0} |Z_s|^2 ds \leq e^{\al p T} |\xi|^p +(p-1)\gamma^{\frac{p}{p-1}} (\sup_{t\leq s\leq T}|Y_s|^{p})\\
\nonumber && +  \frac{1}{\gamma^p}(\int_t^T e^{\al p s} |f(s,0,0)|ds)^p + (p-1)\frac{1}{\eps^{\frac{p}{p-1}}} \left( \sup_{t \in [0,T]} (L_s^+)^{p} \right) \\ \nonumber 
&& \qquad+ \eps^{p} \left( \int_t^T e^{\al p s} dK_s \right)^p  - p \int_t^T e^{\al p s} |Y_t|^{p-1} \sgn(Y_s) Z_s dB_s.
\end{eqnarray}
Next we focus on the control of the term $ \int_t^T e^{\al p s} dK_s$. So using the predictable dual projection property (see e.g. \cite{della2}) we have: $\forall t\leq T$,
\begin{eqnarray*}
\E \left[ (K_T-K_t)^p \right] & = & \E \left[ \int_t^T p(K_T-K_s)^{p-1} dK_s \right] = p \E \int_t^T \E \left[ (K_T-K_s)^{p-1} | \tri_s \right] dK_s  \\
& \leq & p \E \int_t^T \left[ \E \left (K_T-K_s) | \tri_s \right) \right]^{p-1} dK_s, \quad \mbox{ since } p\in ]1,2[.
\end{eqnarray*}
The last inequality holds true thanks to Jensen's conditional one. Recall now that
\begin{equation*}
K_T - K_t =  Y_t - \xi - \int_t^T f(s,Y_s,Z_s) ds + \int_t^T Z_s dB_s
\end{equation*}
then
\begin{eqnarray*}
\E \left[ (K_T-K_t)^p \right] & \leq & p \E \int_t^T \left[ \E \left (Y_s-\xi - \int_s^T f(u,Y_u,Z_u) du \bigg| \tri_s \right) \right]^{p-1} dK_s \\
& \leq & p \E \int_t^T \left[ \E \left ( 2 \sup_{u \in [t,T]} |Y_u| + \int_s^T |f(u,Y_u,Z_u)| du \bigg| \tri_s \right) \right]^{p-1} dK_s \\
& \leq & \frac{1}{2} \E \left[ (K_T-K_t)^p \right] \\
&& + C_p \E \sup_{s \in [t,T]} \left[ \E \left ( 2 \sup_{u \in [t,T]} |Y_u| + \int_t^T |f(u,Y_u,Z_u)| du \bigg| \tri_s \right) \right]^{p}.
\end{eqnarray*}
The last inequality is obtained once more through Young's one. Thus using now Doob's maximal inequality to obtain:
\begin{eqnarray*}
\frac{1}{2}\E \left[ (K_T-K_t)^p \right] & \leq & C_p  \sup_{s \in [t,T]} \E \left[ \E \left ( 2\sup_{u \in [t,T]} |Y_u| + \int_t^T |f(u,Y_u,Z_u)| du \bigg| \tri_s \right) \right]^{p} \\
& \leq & \tilde{C}_p \E \left[  \sup_{u \in [t,T]} |Y_u|^p +  \left( \int_t^T |f(u,Y_u,Z_u)| du \right)^p \right] \\
& \leq & \bar{C}_{p,\kappa} \E \left[ \sup_{u \in [t,T]} |Y_u|^p + \left( \int_t^T |f(u,0,0)| du \right)^p + \left( \int_t^T |Z_u| du \right)^p \right].
\end{eqnarray*}
and then by Lemma \ref{estimate_on_Z} we have 
\begin{equation} \label{eq2-04-03}
\E \left[ (K_T-K_t)^p \right] \leq {C}_{p,\kappa} \E \left[ \sup_{u \in [t,T]} |Y_u|^p + \left( \int_t^T |f(u,0,0)| du \right)^p \right].
\end{equation}
Hereafter $C_{\ka,p}$ is a constant which depends on $p$, $\ka$ and possibly $T$ which may change from a line to another. Now the local martingale $(\int_0^t e^{\al p s} |Y_t|^{p-1} \sgn(Y_s) Z_s dB_s)_{t\leq T}$ is actually a martingale, therefore taking expectation in (\ref{eq1-04-03}) and taking into account of (\ref{eq2-04-03}) to obtain:
\begin{eqnarray}\label{est-term-quad}
&&  \frac{c(p)}{2}\E \int_t^T e^{\al ps } |Y_s|^{p-2} \ind_{Y_s \neq 0} |Z_s|^2 ds \leq e^{\al p T} \E |\xi|^p+\frac{p-1}{\eps^{\frac{p}{p-1}}} \E\left( \sup_{t \in [0,T]}
(L_s^+)^{p} \right) \\ \nonumber 
&& \qquad + C_{\ka,p}\left \{(\gamma^{\frac{p}{p-1}}+\eps^p) \E(\sup_{t\leq s\leq T}|Y_s|^{p})+(\frac{1}{\gamma^p}+\eps^p)\E(\int_t^T e^{\al p s} |f(s,0,0)|ds)^p\right\}.
\end{eqnarray}
Next going back to (\ref{eq1-04-03}) taking the supremum and then expectation we get after taking into account (\ref{eq2-04-03}) 
\begin{eqnarray} \label{eq4-04-03}
&&\E \sup_{t \in [0,T]} e^{\al p t} | Y_t|^p+ \frac{c(p)}{2}\E \int_t^T e^{\al ps } |Y_s|^{p-2} \ind_{Y_s \neq 0} |Z_s|^2 ds\leq \\ \nonumber 
&&  e^{\al p T} \E |\xi|^p + C_{p,\kappa}\gamma^{-p}(\E \int_0^T e^{\al p s} |f(s,0,0)|ds)^p + C_{p,\ka}(\gamma^{\frac{p}{p-1}}+ \eps^{p} ) \E \sup_{u \in [t,T]} |Y_u|^p \\ \nonumber 
&& \qquad +\frac{p-1}{\eps^{\frac{p}{p-1}}}\E \left( \sup_{t \in [0,T]} (L_s^+)^{p} \right) + p \E \sup_{t \in [0,T]} \left| \int_0^T e^{\al p s} |Y_t|^{p-1} \sgn(Y_s) Z_s dB_s \right|.
\end{eqnarray}
Next using the BDG inequality we have
\begin{eqnarray*}
&& \E \sup_{t \in [0,T]} \left| \int_0^T e^{\al p s} |Y_t|^{p-1} \sgn(Y_s) Z_s dB_s \right| \leq 2 \E \left( \int_0^T e^{2 \al p s} |Y_t|^{2(p-1)} \ind_{Y_s \neq 0}  |Z^n_s|^2 ds \right)^{1/2} \\
&& \qquad \leq 2 \E \left[ \left( \sup_{t \in [0,T]} e^{\al p t /2} |Y_t|^{p/2} \right) \left( \int_0^T e^{\al p s} |Y_t|^{p-2}  \ind_{Y_s \neq 0} |Z_s|^2 ds  \right)^{1/2} \right] \\
&& \qquad \leq \eta \E \left( \sup_{t \in [0,T]} e^{\al p t } |Y_t|^{p} \right) + \frac{1}{\eta} \E \left( \int_0^T e^{\al p s} |Y_t|^{p-2}  \ind_{Y_s \neq 0} |Z_s|^2 ds  \right).
\end{eqnarray*}
We now plug this inequality in (\ref{eq4-04-03}) and we obtain:
\begin{eqnarray*}
&&\E \sup_{t \in [0,T]} e^{\al p t} | Y_t|^p  \leq e^{\al p T} \E |\xi|^p + C_{p,\kappa} (\gamma^{-p}+\eps ^p )(\E \int_0^T e^{\al p s} |f(s,0,0)|ds)^p  \\ \nonumber
&& \qquad + \frac{p-1}{\eps^{\frac{p}{p-1}}} \E \left( \sup_{t \in [0,T]}  (L_s^+)^{p} \right) + \{C_{p,\ka}(\gamma^{\frac{p}{p-1}}+ \eps^{p}) +p\eta\} \E \sup_{u \in [t,T]} |Y_u|^p \\ \nonumber && \qquad + \frac{p}{\eta} \E \left( \int_0^T e^{\al p s} |Y_t|^{p-2}  \ind_{Y_s \neq 0} |Z_s|^2 ds \right).\\
&& \leq \left( 1+\frac{2p}{c(p)\eta} \right) e^{\al p T} \E |\xi|^p + \left( 1+\frac{2p}{c(p)\eta} \right) \frac{p-1}{\eps^{\frac{p}{p-1}}} \E \left( \sup_{t \in [0,T]}  (L_s^+)^{p} \right)\\
&& \qquad+C_{p,\kappa} (\gamma^{-p}+\eps ^p )\left(1+\frac{2p}{c(p)\eta}\right) \left( \E \int_0^T e^{\al p s} |f(s,0,0)|ds \right)^p \\ \nonumber 
&&  \qquad + \left\{ C_{p,\ka} \left(1+\frac{2p}{\eta c(p)} \right)(\gamma^{\frac{p}{p-1}}+ \eps^{p}) +p\eta  \right\} \E \sup_{u \in [t,T]} |Y_u|^p .
\end{eqnarray*}
Finally it is enough to chose $\eta=\frac{1}{2p}$ and $\gamma$, $\epsilon$ small enough to obtain the desired result.
\end{proof}
\medskip

\begin{lem} \label{estim_apriori1}
Assume that $(f,\xi,L)$ and $(f',\xi',L')$ are two triplets satisfying Assumptions (H). Suppose that $(Y,Z,K)$ is a solution of the RBSDE $(f,\xi,L)$ and $(Y',Z',K')$ is a solution of the RBSDE $(f',\xi',L')$. Let us set:
$$\begin{array}{ccc}
\Delta f= f- f', &\Delta \xi = \xi - \xi' & \Delta L = L-L' \\
\Delta Y= Y- Y', &\Delta Z = Z - Z' & \Delta K = K-K'
\end{array}$$
and assume that $\D L \in L^p([0,T] \times \Prb)$. Then there exists a constant $C$ such that
\begin{eqnarray*}
\E \sup_{t \in [0,T]} |\Delta Y_t|^p & \leq & C \E \left[ |\Delta \xi|^p + \left(\int_0^T |\Delta f(s,Y_s,Z_s)|ds\right)^p  \right] \\
& & \quad + C (\Psi_T)^{1/p} \left[  \E \sup_{t \in [0,T]} |\Delta L_t|^p \right]^{\frac{p-1}{p}},
\end{eqnarray*}
with
\begin{eqnarray*}
\Psi_T & = & \E \left[ |\xi|^p + \left(\int_0^T |f(u,0,0)|du \right)^p + \left(\sup_{t \in [0,T]} (L_t^+)^p \right) \right. \\
& & \qquad \left. + |\xi'|^p + \left(\int_0^T |f'(u,0,0)|du \right)^p + \left(\sup_{t \in [0,T]} ((L'_t)^+)^p \right)\right].
\end{eqnarray*}
\end{lem}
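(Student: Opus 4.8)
The plan is to derive and study the backward equation satisfied by $\Delta Y$. Subtracting the two reflected equations gives, for $t\leq T$,
$$\Delta Y_t = \Delta \xi + \int_t^T \big[f(s,Y_s,Z_s) - f'(s,Y'_s,Z'_s)\big]\,ds + (\Delta K_T - \Delta K_t) - \int_t^T \Delta Z_s\,dB_s.$$
The decisive preliminary move is to rewrite the generator increment as
$$f(s,Y_s,Z_s) - f'(s,Y'_s,Z'_s) = \Delta f(s,Y_s,Z_s) + \big[f'(s,Y_s,Z_s) - f'(s,Y'_s,Z'_s)\big],$$
so that by (H2)(ii) the bracketed remainder is controlled by $\ka(|\Delta Y_s| + |\Delta Z_s|)$, while $\Delta f(s,Y_s,Z_s)$ plays the role of an exogenous source term. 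Thus $\Delta Y$ solves a BSDE whose coefficient satisfies the hypotheses of Corollary \ref{corol_briand}, driven by the bounded variation process $\Delta K = K - K'$.

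Next I would apply Corollary \ref{corol_briand} to $\Delta Y$ on $[t,T]$. This produces on the left $|\Delta Y_t|^p$ together with $c(p)\int_t^T |\Delta Y_s|^{p-2}\ind_{\Delta Y_s \neq 0}|\Delta Z_s|^2\,ds$, and on the right the terminal value $|\Delta \xi|^p$, a generator contribution, the reflection increment $p\int_t^T |\Delta Y_s|^{p-1}\sgn(\Delta Y_s)\,d(\Delta K_s)$, and a stochastic integral. The generator contribution and the martingale term are handled exactly as in the proof of Lemma \ref{estimate_on_Y}: the Lipschitz part is absorbed by Young's inequality into $|\Delta Y_s|^p$ and the quadratic term $|\Delta Y_s|^{p-2}\ind|\Delta Z_s|^2$; the source $\Delta f(s,Y_s,Z_s)$ is split by Young into $(\int_0^T |\Delta f(s,Y_s,Z_s)|\,ds)^p$ and a small multiple of $\sup_t|\Delta Y_t|^p$; and after taking the supremum over $t$ and expectations, the Burkholder--Davis--Gundy inequality turns the stochastic integral into a small multiple of $\E\sup_t|\Delta Y_t|^p$ plus a fraction of $\E\int|\Delta Y_s|^{p-2}\ind|\Delta Z_s|^2\,ds$, which the left-hand quadratic term absorbs.

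The crux of the argument is the reflection term. Writing $\theta(x)=|x|^{p-1}\sgn(x)$, which is odd and nondecreasing, I would exploit the two Skorokhod conditions. On the support of $dK_s$ one has $Y_s=L_s$, hence $\Delta Y_s = L_s - Y'_s \leq L_s - L'_s = \Delta L_s$ because $Y'_s\geq L'_s$; monotonicity of $\theta$ then gives $\theta(\Delta Y_s)\leq\theta(\Delta L_s)\leq|\Delta L_s|^{p-1}$. Symmetrically, on the support of $dK'_s$ one has $Y'_s=L'_s$ and $Y_s\geq L_s$, whence $\Delta Y_s\geq\Delta L_s$ and $-\theta(\Delta Y_s)\leq-\theta(\Delta L_s)\leq|\Delta L_s|^{p-1}$. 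Combining,
$$\int_t^T \theta(\Delta Y_s)\,d(\Delta K_s) \leq \int_t^T |\Delta L_s|^{p-1}\,dK_s + \int_t^T |\Delta L_s|^{p-1}\,dK'_s \leq \left( \sup_{s \in [0,T]} |\Delta L_s| \right)^{p-1} (K_T + K'_T),$$
and Hölder's inequality with exponents $\tfrac{p}{p-1}$ and $p$ bounds the expectation by $(\E\sup_t|\Delta L_t|^p)^{(p-1)/p}\,(\E(K_T+K'_T)^p)^{1/p}$.

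It then remains to control $\E(K_T+K'_T)^p$. By estimate (\ref{eq2-04-03}) applied to each solution, $\E K_T^p$ and $\E(K'_T)^p$ are dominated by $\E[\sup_u|Y_u|^p+(\int|f(u,0,0)|\,du)^p]$ (resp.\ with primes), and Lemma \ref{estimate_on_Y} bounds $\E\sup|Y|^p$ and $\E\sup|Y'|^p$ by the data, so that $\E(K_T+K'_T)^p\leq C\,\Psi_T$. Substituting this into the Hölder bound yields the reflection contribution $C(\Psi_T)^{1/p}(\E\sup_t|\Delta L_t|^p)^{(p-1)/p}$, and choosing the Young and BDG parameters small enough to absorb all multiples of $\E\sup_t|\Delta Y_t|^p$ into the left-hand side gives the claim. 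I expect the main obstacle to be precisely the combined use of the two Skorokhod conditions with the barrier constraints $Y\geq L$, $Y'\geq L'$ to obtain the one-sided pointwise bounds on $\theta(\Delta Y_s)$; this is what forces the asymmetric Hölder split, produces the $(p-1)/p$ exponent, and is the only place the hypothesis $\Delta L\in L^p$ (through $\sup_t|\Delta L_t|\in L^p$) enters.
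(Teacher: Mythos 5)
Your proposal is correct and follows essentially the same route as the paper: Corollary \ref{corol_briand} applied to $\Delta Y$, the Skorokhod conditions combined with the barrier constraints $Y\geq L$, $Y'\geq L'$ to bound the reflection term by $|\Delta L|^{p-1}$ integrated against the increasing processes, H\"older's inequality with exponents $\tfrac{p}{p-1}$ and $p$, and the a priori bound $\E K_T^p+\E (K'_T)^p\leq C\,\Psi_T$ from (\ref{eq2-04-03}) and Lemma \ref{estimate_on_Y}. The only cosmetic difference is that the paper phrases the reflection estimate via the two-variable monotone map $(x,a)\mapsto|x-a|^{p-2}\ind_{x\neq a}(x-a)$ and keeps the signed measure $d(\Delta K)$, whereas you split into the two positive measures $dK$ and $dK'$; both give the same H\"older bound and the same final constant structure.
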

\begin{proof}
Using Corollary \ref{corol_briand}, we have for all $0 \leq t \leq T$:
\begin{eqnarray} \nonumber
&& |\D Y_t|^p + c(p)\int_t^T |\D Y_s|^{p-2} \ind_{\D Y_s \neq 0} |\D Z_s|^2 ds \leq |\D \xi|^p \\ \nonumber && \qquad + p \int_t^T |\D Y_s|^{p-1} \sgn(\D Y_s) (f(s,Y_s,Z_s)-f'(s,Y'_s,Z'_s)) ds \\ \nonumber 
&& \qquad + p \int_t^T \al |\D Y_s|^{p-1} \sgn(\Delta Y_s) d(\Delta K_s) - p \int_t^u |\D Y_s|^{p-1} \sgn(\D Y_s) \D Z_s dB_s\\
\label{comp_ineq1} && \quad \leq |\D \xi|^p + p \kappa \int_t^T |\D Y_s|^{p} ds \\ \nonumber && \qquad + p \kappa \int_t^T |\D Y_s|^{p-1} |\D Z_s| ds + p \int_t^T |\D Y_s|^{p-1} |\D f(s,Y_s,Z_s)| ds \\ \nonumber 
&& \qquad + p \int_t^T |\D Y_s|^{p-1} \sgn(\Delta Y_s) d(\Delta K_s) - p \int_t^T |\D Y_s|^{p-1} \sgn(\D Y_s) \D Z_s dB_s.
\end{eqnarray}
First note that
$$p\kappa |\D Y_s|^{p-1} |\D Z_s| \leq \frac{p \kappa^2}{(p-1)} |\D Y_s|^p + \frac{c(p)}{2} |\D Y_s|^{p-2} \ind_{\D Y_s \neq 0} |\D Z_s|^2.$$
Next if we denote by $\theta$ the function $ (x,a) \mapsto |x-a|^{p-2} \ind_{x \neq a} (x-a)$, we have
$$\begin{array}{l}
\int_t^T |\D Y_s|^{p-1} \sgn(\Delta Y_s) d K_s = \int_t^T |\D Y_s|^{p-1} \sgn(\Delta Y_s) d K_s\\
\qquad\qquad \qquad\qquad =\int_t^T \theta(Y_s,Y'_s) 1_{[Y_s=L_s]}dK_s = \int_t^T \theta(L_s,Y'_s) dK_s.
\end{array}$$ 
In the same way dealing with the other term as previously to obtain:
\begin{eqnarray*}
\int_t^T |\D Y_s|^{p-1} \sgn(\Delta Y_s) d(\Delta K_s) & = & \int_t^T |L_s-Y'_s|^{p-2} \ind_{L_s-Y'_s \neq 0} ( L_s - Y'_s) dK_s \\
&& - \int_t^T | Y_s - L'_s|^{p-2} \ind_{ Y_s - L'_s \neq 0} ( Y_s - L'_s) dK'_s.
\end{eqnarray*}
But for any $x,a\in \R$, the functions $a\in \R\mapsto \theta (x,a)$ and $x\in \R\mapsto \theta (x,a)$ are respectively non-increasing and non-decreasing, therefore:
\begin{eqnarray*} 
&&\int_t^T |\D Y_s|^{p-1} \sgn(\Delta Y_s) d(\Delta K_s)\leq \\ 
&& \int_t^T |\D L_s|^{p-2} \ind_{\D L_s \neq 0} ( \D L_s) dK_s - \int_t^T |\D L_s|^{p-2} \ind_{\D L_s \neq 0} ( \D L_s ) dK'_s \\
&&= \int_t^T |\D L_s|^{p-1} d(\D K_s)
\end{eqnarray*} 
since $Y\geq L$ and $Y'\geq L'$. Thus coming back to (\ref{comp_ineq1}) to get
\begin{eqnarray} \label{comp_ineq2}
&& |\D Y_t|^p + \frac{c(p)}{2} \int_t^T |\D Y_s|^{p-2} \ind_{\D Y_s \neq 0} |\D Z_s|^2 ds \leq |\D \xi|^p + p\int_t^T |\D Y_s|^{p-1} |\D f(s,Y_s,Z_s)| ds  \\ \nonumber 
&& \qquad + \left( p \kappa + \frac{p \kappa^2}{(p-1)} \right)\int_t^T |\D Y_s|^{p} ds  \\
\nonumber && \qquad + p \int_t^T |\D L_s|^{p-1} d(\D K_s) - p \int_t^T |\D Y_s|^{p-1} \sgn(\D Y_s) \D Z_s dB_s.
\end{eqnarray}
On the other hand the process $\left\{\int_0^t |\D Y_t|^{p-1} \sgn(\D Y_s) \D Z_s dB_s \right\}_{0 \leq t \leq T}$ is a martingale thanks to the Burkholder-Davis-Gundy and Young inequalities. With $t=0$ and taking the expectation in (\ref{comp_ineq2}) we have
\begin{eqnarray*}
&& \frac{c(p)}{2} \E \int_0^T |\D Y_s|^{p-2} \ind_{\D Y_s \neq 0} |\D Z_s|^2 ds \leq \E |\D \xi|^p \\
&& \qquad + \left( p \kappa + \frac{p \kappa^2}{(p-1)}  \right) \E \int_0^T |\D Y_s|^{p} ds  \\
&& \qquad + p\E \int_0^T |\D Y_s|^{p-1} |\D f(s,Y_s,Z_s)|  ds + p \E \int_0^T |\D L_s|^{p-1} d(\D K_s).
\end{eqnarray*}
Coming once again back to (\ref{comp_ineq2}), we also have
\begin{eqnarray*}
&& \E |\D Y_t|^p \leq \E |\D \xi|^p + \left( p \kappa + \frac{p \kappa^2}{(p-1)}  \right) \E \int_t^T |\D Y_s|^{p} ds \\
&& \qquad + p\E \int_0^T |\D Y_s|^{p-1} |\D f(s,Y_s,Z_s)|  ds + p \E \int_0^T |\D L_s|^{p-1} d(\Delta K_s).
\end{eqnarray*}
With the Gronwall lemma we conclude that
$$ \E \int_0^T |\D Y_s|^{p} ds \leq C_p \E \left( |\D \xi|^p + \int_0^T |\D Y_s|^{p-1} |\D f(s,Y_s,Z_s)|  ds + \int_0^T |\D L_s|^{p-1} d(\D K_s) \right).$$ 
Now with H\"older's inequality
$$\E \int_0^T |\D L_s|^{p-1} d(\D K_s) \leq \left( \E \sup_{s \in [0,T]}|\Delta L_s|^{p} \right)^{\frac{p-1}{p}} \E \left(|\Delta K_T|^p \right)^{\frac{1}{p}}$$
and since $\E |\Delta K_T|^p  \leq C_p (\E |K_T|^p  + \E |K'_T|^p)$, using inequality (\ref{eq2-04-03}) and Lemma \ref{estimate_on_Y}, we deduce that
$$\E |\Delta K_T|^p \leq C \Psi_T.$$
Therefore we obtain
\begin{eqnarray*}
&& \E \int_0^T |\D Y_s|^{p} ds + \frac{c(p)}{2} \E \int_0^T |\D Y_s|^{p-2} \ind_{\D Y_s \neq 0} |\D Z_s|^2 ds \\
&& \qquad \leq C \E \left( |\D \xi|^p + \int_0^T |\D Y_s|^{p-1} |\D f(s,Y_s,Z_s)| ds \right) + C \left( \E \sup_{s \in [0,T]}|\Delta L_s|^{p} \right)^{\frac{p-1}{p}} \left( \Psi_T \right)^{\frac{1}{p}}.
\end{eqnarray*}
But
\begin{eqnarray*}
\int_0^T |\D Y_s|^{p-1} |\D f(s,Y_s,Z_s)| ds & \leq & \sup_{s\leq T}|\D Y_s|^{p-1}\times \int_0^T  |\D f(s,Y_s,Z_s)| ds \\
& \leq & \rho^{\frac{p}{p-1}} \sup_{s\leq T}|\D Y_s|^{p}+\frac{1}{\rho^p} \left( \int_0^T |\D f(s,Y_s,Z_s)| ds \right)^p
\end{eqnarray*}
for any $\rho>0$. Next with (\ref{comp_ineq2}), BDG inequality, and the two previous inequalities, we obtain after having chosen $\rho$ small enough:
\begin{eqnarray*}
\E \sup_{s \in [0,T]} |\D Y_s|^{p} & \leq & C \E \left( |\D \xi|^p + \left\{\int_0^T  |\D f(s,Y_s,Z_s)|  ds \right\}^p \right)\\
&& + C \left( \E \sup_{s \in [0,T]}|\Delta L_s|^{p} \right)^{\frac{p-1}{p}} \left( \Psi_T \right)^{\frac{1}{p}}.
\end{eqnarray*}
The conclusion of the Lemma follows.
\end{proof}

\begin{thm}
Under the assumptions [H1]-[H3], there is at most one $L^p$-solution for the reflected BSDE associated with $(f,\xi,L)$.
\end{thm}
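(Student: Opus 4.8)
The plan is to read off uniqueness directly from the a priori estimate of Lemma \ref{estim_apriori1}. Let $(Y,Z,K)$ and $(Y',Z',K')$ be two $L^p$-solutions of the reflected BSDE associated with the same data $(f,\xi,L)$. I would apply Lemma \ref{estim_apriori1} with the two triples taken identically equal to $(f,\xi,L)$, so that $\D f\equiv 0$, $\D\xi=0$ and $\D L\equiv 0$. In particular the integrability requirement $\D L\in L^p([0,T]\times\Prb)$ is trivially satisfied, and every term on the right-hand side of the estimate vanishes. We thus obtain $\E\sup_{t\in[0,T]}|\D Y_t|^p\le 0$, whence $Y_t=Y'_t$ for all $t\le T$, $P$-a.s.

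The substantive point is then to upgrade $Y\equiv Y'$ to $Z\equiv Z'$ and $K\equiv K'$. Subtracting the dynamics of the two solutions and using $\D Y\equiv 0$ gives, in differential form,
$$d(\D K_t)=-\big(f(t,Y_t,Z_t)-f(t,Y_t,Z'_t)\big)\,dt+\D Z_t\,dB_t,$$
where $Y=Y'$ has already been inserted inside $f$. Integrating from $0$ to $t$ and recalling $\D K_0=0$ yields
$$\D K_t+\int_0^t\big(f(s,Y_s,Z_s)-f(s,Y_s,Z'_s)\big)\,ds=\int_0^t\D Z_s\,dB_s.$$
The left-hand side is a continuous process of finite variation, being the difference $K-K'$ of two continuous non-decreasing processes plus a Lebesgue integral, while the right-hand side is a continuous local martingale starting at $0$.

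The key step is the classical fact that a continuous local martingale of finite variation is indistinguishable from $0$. Applying it here forces $\int_0^t\D Z_s\,dB_s\equiv 0$, so its quadratic variation $\int_0^t|\D Z_s|^2\,ds$ vanishes and hence $\D Z=0$ in $\spam^p(\R^d)$, i.e. $Z=Z'$. Since $Z=Z'$, the drift $f(s,Y_s,Z_s)-f(s,Y_s,Z'_s)$ is identically zero, and the finite-variation part of the identity above then gives $\D K_t\equiv 0$, i.e. $K=K'$. I expect the only delicate ingredient to be this martingale-versus-finite-variation dichotomy (equivalently, the uniqueness of the Doob--Meyer/semimartingale decomposition); everything else is a direct substitution into Lemma \ref{estim_apriori1}.
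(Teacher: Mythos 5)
Your proposal is correct and follows essentially the same route as the paper: apply Lemma \ref{estim_apriori1} with identical data to get $Y=Y'$, then deduce $Z=Z'$ and $K=K'$. The paper states the latter two identifications without detail, whereas you justify them via the uniqueness of the semimartingale decomposition (a continuous local martingale of finite variation vanishes), which is exactly the argument being implicitly invoked.
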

\begin{proof}
Using the previous Lemma (since $L-L' = 0 \in L^p$), we obtain immediatly $Y=Y'$. Therefore we have also $Z=Z'$ and finally $K=K'$, whence uniqueness of the $L^p$-solution of the reflected BSDE associated with $(f,\xi,L)$.
\end{proof}

\section{Existence via the Snell Envelope Method}

We now focus on the issue of existence. To begin with let us first assume that the function $f$ does not depend on $(y,z)$.
\begin{thm} \label{eqssyz}
The reflected BSDE associated with $(f(t),\xi, L)$ has a unique $L^p$-solution.
\end{thm}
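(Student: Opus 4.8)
The plan is to build the solution from the Snell envelope of an auxiliary obstacle process and then recover $(Y,Z,K)$ by Brownian martingale representation, as in the classical $L^2$ theory, but paying attention to the fact that the data lie only in $L^p$. Since $f$ does not depend on $(y,z)$, a change of variable first removes the drift: I would set $\wtil{Y}_t = Y_t + \int_0^t f(s)\,ds$, $\wtil{\xi} = \xi + \int_0^T f(s)\,ds$ and $\wtil{L}_t = L_t + \int_0^t f(s)\,ds$, so that the equation in Definition \ref{defiLpsol} becomes $\wtil{Y}_t = \wtil{\xi} + K_T - K_t - \int_t^T Z_s\,dB_s$ under the constraints $\wtil{Y}_t \geq \wtil{L}_t$, $\int_0^T(\wtil{Y}_s - \wtil{L}_s)\,dK_s = 0$, with $\wtil{Y}_T = \wtil\xi \geq \wtil L_T$ (the last inequality coming from (H3)). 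Writing $N_t := \wtil Y_t + K_t = \E[\wtil\xi + K_T \mid \tri_t]$ shows $N$ is a martingale, so that any solution $\wtil Y$ is the \emph{smallest} supermartingale dominating the obstacle; this identifies the candidate as the Snell envelope $\wtil Y_t = \text{ess}\sup_{\tau \geq t}\E[\wtil\eta_\tau \mid \tri_t]$ of $\wtil\eta_t := \wtil L_t\ind_{t<T} + \wtil\xi\ind_{t=T}$, equivalently $Y_t = \text{ess}\sup_{\tau\geq t}\E[\int_t^\tau f(s)\,ds + L_\tau\ind_{\tau<T} + \xi\ind_{\tau=T}\mid \tri_t]$.

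Next I would construct and decompose this envelope. Because $\xi \in L^p$, $\int_0^T|f|\,ds \in L^p$ and $\sup_t L_t^+ \in \spas^p$ with $p>1$, the dominating variable $V := |\xi| + \int_0^T|f(s)|\,ds + \sup_{t}L_t^+$ lies in $L^p \subset L^1$; hence $\wtil\eta$ is of class [D] and its Snell envelope exists as the smallest RCLL supermartingale above it. Since $L$ and $t\mapsto\int_0^t f$ are continuous and $\wtil\xi \geq \wtil L_T$, the obstacle has no downward jump and the classical regularity result gives continuity of $\wtil Y$; its Doob--Meyer decomposition then reads $\wtil Y_t = \wtil Y_0 + M_t - K_t$ with $M$ a continuous martingale and $K$ continuous non-decreasing, $K_0=0$. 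Martingale representation gives a predictable $Z$ with $M_t = \int_0^t Z_s\,dB_s$, and putting $Y_t = \wtil Y_t - \int_0^t f(s)\,ds$ recovers items 3 and 4 of Definition \ref{defiLpsol}, while the characterisation $\{dK>0\}\subset\{\wtil Y=\wtil\eta\}$ yields the flat-off condition $\int_0^T(Y_s-L_s)\,dK_s=0$ of item 5.

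It remains to verify the $L^p$-integrability, which is exactly where $p\in]1,2[$ enters. From $Y_t \geq L_t \geq -V$ and $Y_t \leq \E[V\mid\tri_t]$ one gets $|Y_t|\leq \E[V\mid\tri_t]$, so Doob's $L^p$-maximal inequality (available precisely because $p>1$) gives $\E\sup_t|Y_t|^p \leq C_p\,\E[V^p]<\infty$, i.e. $Y\in\spas^p$. Once $Y\in\spas^p$, Lemma \ref{estimate_on_Z} yields $Z\in\spam^p$, hence $(Y,Z)\in\spaprod^p$, and the estimate (\ref{eq2-04-03}) obtained in the proof of Lemma \ref{estimate_on_Y} gives $K_T\in L^p$; thus $(Y,Z,K)$ is a genuine $L^p$-solution. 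Uniqueness is immediate, since the generator $f(t)$ satisfies (H2) with $\ka=0$ and the uniqueness theorem proved above applies.

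The step I expect to be the main obstacle is neither the algebraic identification of the candidate nor the Doob--Meyer decomposition, but the passage from mere \emph{existence} of the Snell envelope (which only needs class [D], i.e. $L^1$-data) to the sharp $\spaprod^p$-bounds together with $K_T\in L^p$. The whole scheme hinges on $p>1$ so that Doob's maximal inequality controls $\sup_t|Y_t|$ in $L^p$, and on bootstrapping through the a priori estimates of Lemmas \ref{estimate_on_Z} and \ref{estimate_on_Y} rather than re-deriving them; establishing continuity of the Snell envelope of the continuous obstacle under only $L^p$-integrability also requires some care.
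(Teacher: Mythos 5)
Your proposal follows essentially the same route as the paper: identify the candidate as the Snell envelope of $\int_0^\cdot f(s)\,ds + L_\cdot\ind_{[\cdot<T]}+\xi\ind_{[\cdot=T]}$, apply Doob--Meyer and martingale representation, and obtain the flat-off condition from the fact that $K$ charges only the contact set. One small repair: the lower bound $Y_t\geq L_t\geq -V$ is not justified, since $V$ controls only $\sup_t L_t^+$ and (H3) says nothing about $L^-$; the correct lower bound $Y_t\geq -\E[V\mid\tri_t]$ comes instead from taking $\tau=T$ in the essential supremum, after which your Doob-inequality and bootstrapping argument for the $\spaprod^p$ and $L^p$ bounds on $K_T$ goes through as stated.
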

\begin{proof} We are going to proof the existence of a solution in using the Snell envelope of processes. The Snell envelope of a process $X$ which belongs to class [D] is the smallest supermartingale of class [D] which is greater than $X$.

For $t\leq T$ let us set:
$$\tilde Y_t= \underset{\tau \geq t}{\mbox{esssup}} \E \left[ \int_0^\tau f(s)ds +L_\tau \ind_{[\tau <T]}+\xi \ind_{[\tau =T]}|{\cal F}_t \right].$$
Here $\tau$ is a ${\cal F}_t$-stopping time. The processes $\tilde Y$ verifies $\tilde Y_T=\xi$ and is called the Snell envelope of the process which is inside $\mbox{esssup}$.

Since the process $(\int_0^t|f(s)|ds)_{t\leq T}$ and $\xi$ belong to $L^p(\Omega)$ and $L^+$ belongs to $\spas^{p}$, then the process $\tilde Y$ exists and belongs to $\spas^{p}$. Furthermore thanks to Doob-Meyer decomposition there exists an increasing continuous process $(K_t)_{t\leq T}$ which belongs to $\spas^{p}$ ($K_0=0$) and
a martingale $(M_t)_{t\leq T}$ which is also in $\spas^{p}$ (see e.g. \cite{della2}, pp.221) such that:
$$\forall t\leq T, \,\, \tilde Y_t=M_t-K_t.$$ 
Next as $M\in \spas^{p}$ then there exists a process $Z\in \spam^{p}$ such that:
$$ \forall t\leq T, M_t=M_0+\int_0^tZ_sdB_s.$$
Now for $t\leq T$, let us set:
$$Y_t=\tilde Y_t-\int_0^tf(s)ds.$$ 
Therefore the triplet $(Y,Z,K)$ verifies: for any $t\leq T$,
$$Y_t = \xi + \int_t^T f(s)ds + K_T - K_t - \int_t^T Z_s dB_s.$$
Moreover  we obviously have $Y\geq L$. In order to complete the proof it remains to show that $(Y_t-L_t)dK_t=0$ for any $t\leq T$. So let $\tau \leq T$ be a stopping time and let us set $L^\xi_t:=L_t \ind_{[t<T]}+\xi \ind_{[t=T]}$ and $D_\tau$ the following stopping time:
$$D_\tau=\inf \left\{s\geq \tau, \tilde Y_s=\int_0^sf(u)du+L^\xi_s \right\}\wedge T.$$
Since the process $L$ is continuous on $[0,T[$ and may have a positive jump at $T$, then the stopping time $D_\tau$ is optimal after $\tau$. It follows that the process $(\tilde Y_s)_{s\in [\tau,D_\tau]}$ is a martingale and $\tilde Y_{D_\tau}=L^\xi_{D_\tau}+\int_0^{D_\tau}f(s)ds$ (see
e.g. \cite{nekaspproba}, pp.129, pp.143). Henceforth we have $\int_\tau^{D_\tau}(\tilde Y_s-\int_0^sf(u)du-L^\xi_s)dK_s=0$ which
implies that $\int_0^{T}(\tilde Y_s-\int_0^sf(u)du-L^\xi_s)dK_s=0$. If not, by continuity we can find a stopping time $\tau$ such that $\int_\tau^{D_\tau}(\tilde Y_s-\int_0^sf(u)du-L^\xi_s)dK_s>0$, which is absurd. Now the definition of $Y$ implies also that:
$$\int_0^{T}(Y_s-L_s)dK_s=0.$$ 
Thus the proof is complete.
\end{proof}
\bigskip

We now deal with the general case of generator i.e. $f$ depends on $(y,z)$ and is Lipschitz w.r.t. those arguments. So for $(U,V) \in \spaprod^p$ we define $(Y,Z,K) = \Phi(U,V)$ where $(Y,Z)$ is the $L^p$-solution of the BSDE associated with $(f(t,U_t,V_t),\xi,L)$, i.e.,
\begin{eqnarray*} 
&& (Y,Z)\in {\cal B}^p,\,\, K\in \spas^p\\&&Y_t = \xi + \int_t^T f(s,U_s,V_s)ds + K_T - K_t - \int_t^T Z_s dB_s,\,\,t\leq T \\
&&Y_t \geq L_t \mbox{ and }(Y_t-L_t)dK_t=0.
\end{eqnarray*}
The solution of this equation exists and is unique thanks to Theorem \ref{eqssyz}.
\medskip

Now for $(U',V')$ in $\spaprod^p$, we define in the same way $(Y',Z')=\Phi(U',V')$ and $(\D Y, \D Z)$ by $(Y-Y',Z-Z')$, $\D f_s = f(s,U_s,V_s)-f(s,U'_s,V'_s)$.

We are now going to prove that there exists a real constant $\al \in \R$ such that $\Phi$ is a contraction on $\spaprod^p$, equipped with the equivalent norm:
$$\| (Y,Z) \| = \| e^{\al .} Y \|_{\spas^{p}}  + \|e^{\al .} Z \|_{\spam^{p}}.$$
Actually we have:
\begin{lem} \label{control_on_snell_env}
There exists $\al \in \R$ and a constant $C_{\al}$ such that:
\begin{eqnarray}\label{control_y}
\| e^{\al .} \D Y \|_{\spas^{p}} \leq C_{\al} (\| e^{\al .} \D U \|_{\spas^{p}} + \|e^{\al .} \D V \|_{\spam^{p}} ).
\end{eqnarray}
\end{lem}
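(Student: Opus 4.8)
The plan is to estimate the weighted quantity $e^{\al p t}|\D Y_t|^p$ along the lines of the proofs of Lemmas \ref{estimate_on_Y} and \ref{estim_apriori1}, exploiting the decisive simplification that the two triples $\Phi(U,V)=(Y,Z,K)$ and $\Phi(U',V')=(Y',Z',K')$ are built from the \emph{same} terminal value $\xi$ and the \emph{same} barrier $L$. Hence $\D Y$ solves a BSDE with zero terminal condition, bounded-variation driver $\D K=K-K'$, and a driver $\D f_s=f(s,U_s,V_s)-f(s,U'_s,V'_s)$ that does not depend on $(y,z)$. First I would apply Corollary \ref{corol_briand} to $\D Y$ with $A=\D K$ and introduce the weight via It\^o's formula on $e^{\al p t}|\D Y_t|^p$ (which produces the extra term $-\al p\int_t^T e^{\al ps}|\D Y_s|^p\,ds$), obtaining for every $t\le T$ a pathwise inequality controlling $e^{\al p t}|\D Y_t|^p+c(p)\int_t^T e^{\al ps}|\D Y_s|^{p-2}\ind_{\D Y_s\neq0}|\D Z_s|^2\,ds$ by the (vanishing) terminal value, a reflection integral, the driver integral $p\int_t^T e^{\al ps}|\D Y_s|^{p-1}|\D f_s|\,ds$, and a stochastic integral.

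The key step is the treatment of the reflection term $p\int_t^T e^{\al p s}|\D Y_s|^{p-1}\sgn(\D Y_s)\,d(\D K_s)$. Because $L=L'$, the argument of Lemma \ref{estim_apriori1} applies with $\D L\equiv 0$: since $dK$ is carried by $\{Y=L\}$ and $dK'$ by $\{Y'=L\}$, while $Y\ge L$ and $Y'\ge L$, the two pieces of this term are individually non-positive (one integrand being $\le 0$, the other $\ge 0$ against the correct monotone measure), so the whole term is bounded by $p\int_t^T|\D L_s|^{p-1}\,d(\D K_s)=0$ and simply drops out. This is exactly what makes the estimate homogeneous in $(\D U,\D V)$: no $\Psi_T$-type additive term survives here, in contrast with Lemma \ref{estim_apriori1}.

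After discarding the reflection term I would use $|\D f_s|\le\ka(|\D U_s|+|\D V_s|)$ and split the cross terms by Young's inequality. The $\D U$-contribution $p\ka\int_t^T e^{\al ps}|\D Y_s|^{p-1}|\D U_s|\,ds$ produces, besides a $\sup_s e^{\al ps}|\D U_s|^p$ term, a multiple of $\int_t^T e^{\al ps}|\D Y_s|^p\,ds$ that is absorbed by the $-\al p\int_t^T e^{\al ps}|\D Y_s|^p\,ds$ term once $\al$ is chosen large. For the $\D V$-contribution I would write $e^{\al ps}=e^{\al(p-1)s}e^{\al s}$, factor out $\sup_s(e^{\al s}|\D Y_s|)^{p-1}$, and apply the Cauchy--Schwarz inequality in time to produce exactly $\big(\int_t^T e^{2\al s}|\D V_s|^2\,ds\big)^{p/2}$, i.e. the $\spam^p$-norm weight; a further Young step then separates a small multiple of $\sup_s e^{\al ps}|\D Y_s|^p$ from $\|e^{\al\cdot}\D V\|_{\spam^p}^p$. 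Taking the supremum over $t$ and then expectations, the stochastic integral is controlled by the Burkholder--Davis--Gundy inequality, yielding $\eta\,\E\sup_t e^{\al pt}|\D Y_t|^p+\eta^{-1}\E\int_0^T e^{\al ps}|\D Y_s|^{p-2}\ind_{\D Y_s\neq0}|\D Z_s|^2\,ds$; the quadratic term is absorbed via the companion estimate obtained by taking expectation at $t=0$ in the basic inequality, exactly as in Lemma \ref{estimate_on_Y}.

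The main obstacle will be the simultaneous absorption of the several $\sup_t e^{\al pt}|\D Y_t|^p$ contributions — one from each Young splitting and one from BDG — into the left-hand side: this forces the Young parameters and $\eta$ to be small while $\al$ must be large enough to kill the $\int e^{\al ps}|\D Y_s|^p\,ds$ terms, and one must verify these requirements are compatible. The other delicate point is arranging the $\D V$-term to reproduce the genuine $\spam^p$-norm rather than a spurious $\int e^{\al ps}|\D V_s|^p\,ds$, which is why the weight is split and Cauchy--Schwarz is used as above. Once the constants are fixed, rearranging yields inequality (\ref{control_y}) with $C_\al$ depending only on $p$, $\ka$, $T$ and $\al$.
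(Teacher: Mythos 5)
Your proof follows essentially the same route as the paper's: apply Corollary \ref{corol_briand} to $\D Y$ with the exponential weight, observe that the reflection term $\int e^{\al p s}|\D Y_s|^{p-1}\sgn(\D Y_s)\,d(\D K_s)$ is non-positive because both solutions are reflected on the \emph{same} barrier $L$ (so $dK$ lives on $\{Y=L\}$, $dK'$ on $\{Y'=L\}$, and $Y,Y'\ge L$), split the driver term by Young, absorb the $\int e^{\al ps}|\D Y_s|^p\,ds$ contributions by taking $\al$ large, and handle the stochastic integral by BDG together with the companion estimate on $\int e^{\al ps}|\D Y_s|^{p-2}\ind_{\D Y_s\neq 0}|\D Z_s|^2\,ds$. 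This is correct and proves the lemma as stated. One caveat worth noting: for the $\D V$-term the paper applies Young pointwise in time to $|\D Y_s|^{p-1}|\D f_s|$ and then uses H\"older in $s$ (valid since $p<2$) to bound $\int e^{\al ps}|\D V_s|^p\,ds$ by $T^{1-p/2}\bigl(\int e^{2\al s}|\D V_s|^2\,ds\bigr)^{p/2}$ --- the term you dismiss as ``spurious'' is in fact harmless --- and this keeps the whole constant $C_\al$ proportional to the Young parameter $\eps^p$, hence tunable to be $<1/2$ as Proposition \ref{contract} requires. Your alternative (factor out $\sup_s(e^{\al s}|\D Y_s|)^{p-1}$, then Cauchy--Schwarz, then Young) forces the conjugate $\sup_s e^{\al ps}|\D Y_s|^p$ term to be absorbed into the left-hand supremum rather than into $-\al p\int e^{\al ps}|\D Y_s|^p\,ds$, which bounds your Young parameter from below and hence the coefficient of $\|e^{\al\cdot}\D V\|_{\spam^p}^p$ from below; the lemma still holds, but the resulting $C_\al$ cannot be driven below $1/2$ and would not feed into the fixed-point argument without reverting to the paper's pointwise splitting.
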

\begin{proof}
Using Corollary \ref{corol_briand}, we have for all $0 \leq t \leq u \leq T$:
\begin{eqnarray} \label{eq1_27_06}
&& e^{\al p t} |\D Y_t|^p + c(p) \int_t^u e^{\al ps } |\D Y_s|^{p-2} \ind_{\D Y_s \neq 0} |\D Z_s|^2 ds   \\ \nonumber 
&& \quad \leq e^{\al p u} |\D Y_u|^p  + p \int_t^u e^{\al p s} |\D Y_s|^{p-1} \sgn(\D Y_s) \D f_s ds - p \int_t^u \al e^{\al p s }  |\D Y_s|^p ds \\ \nonumber 
&& \qquad + p \int_t^u e^{\al p s} |\D Y_s|^{p-1} \sgn(\D Y_s) d(\D K_s) - p \int_t^u e^{\al p s} |\D Y_s|^{p-1} \sgn(\D Y_s) \D Z_s dB_s.
\end{eqnarray}
Now for $\eps > 0$, using Young's inequality 
\begin{eqnarray*}
&&\int_t^u e^{\al p s} |\D Y_s|^{p-1} \sgn(\D Y_s) \D f_s ds \leq \int_t^u e^{\al p s} \left( \eps^{-\frac{p}{p-1}} \frac{p-1}{p} |\D Y_s|^{p} + \frac{\eps^p}{p} |\D f_s|^p \right) ds \\
&& \quad \leq \eps^{-\frac{p}{p-1}} \frac{p-1}{p} \int_t^u e^{\al p s} |\D Y_s|^{p} ds + \frac{\kappa^p 2^{p-1}\eps^p}{p} \int_t^u e^{\al p s} \left( |\D U_s|^{p} +  |\D V_s|^{p} \right)  ds \\
&& \quad  \leq \eps^{-\frac{p}{p-1}} \frac{p-1}{p} \int_t^u e^{\al p s}  |\D Y_s|^{p} ds + \frac{\kappa^p 2^{p-1}T\eps^p}{p} \left[ \left( \sup_{s \in [t,u]}  e^{\al p s}  |\D Y_s|^{p} \right) \right. \\
&& \qquad \left. + \left(  \int_t^u   e^{2 \al s} |\D V_s|^{2} ds
\right)^{p/2} \right].
\end{eqnarray*}
Moreover using Fatou's Lemma
\begin{eqnarray*}
&& \int_t^u e^{\al p s} |\D Y_s|^{p-1}  \sgn(\D Y_s) d(\D K_s) = \int_t^u e^{\al p s}|\D Y_s|^{p-2} \ind_{\D Y_s \neq 0} ( Y_s - L_s) dK_s \\
&& \qquad + \int_t^u e^{\al p s}|\D Y_s|^{p-2} \ind_{\D Y_s \neq 0} ( Y'_s - L_s) dK'_s \\
&& \qquad - \int_t^u e^{\al p s}|\D Y_s|^{p-2} \ind_{\D Y_s \neq 0} ( Y_s - L_s) dK'_s -  \int_t^u e^{\al p s}|\D Y_s|^{p-2} \ind_{\D Y_s \neq 0} ( Y'_s - L_s) dK_s \\
&& \quad \leq \int_t^u e^{\al p s}|\D Y_s|^{p-2} \ind_{\D Y_s \neq
0} ( Y_s - L_s) dK_s
+ \int_t^u e^{\al p s}|\D Y_s|^{p-2} \ind_{\D Y_s \neq 0} ( Y'_s - L_s) dK'_s \\
&& \quad =0
\end{eqnarray*}
since $dK_s=\ind_{[Y_s=L_s]}dK_s$ and
$dK'_s=\ind_{[Y'_s=L_s]}dK'_s$, for any $s\in [0,T]$. Coming back to
(\ref{eq1_27_06}) we obtain:
\begin{eqnarray} \label{eq2_27_06}
&& e^{\al p t} |\D Y_t|^p + c(p) \int_t^u e^{\al ps } |\D Y_s|^{p-2}
\ind_{\D Y_s \neq 0} |\D Z_s|^2 ds   \\ \nonumber && \quad \leq
e^{\al p u} |\D Y_u|^p  +  \left(  \eps^{-\frac{p}{p-1}}
\frac{p-1}{p} - p \al \right) \int_t^u e^{\al p s}  |\D Y_s|^{p}  ds
\\ \nonumber && \qquad + \frac{\kappa^p 2^{p-1} T\eps^p}{p} \left[
\left( \sup_{s \in [t,u]} e^{\al p s} |\D Y_s|^{p} \right) + \left(
\int_t^u e^{2 \al s} |\D V_s|^{2} ds \right)^{p/2} \right] \\
\nonumber && \qquad - p \int_t^u e^{\al p s} |\D Y_t|^{p-1} \sgn(\D
Y_s) \D Z_s dB_s.
\end{eqnarray}
But as in the proof of uniqueness, the process 
$$\left\{ M_t = \int_0^t e^{\al p s} |\D Y_t|^{p-1} \sgn(\D Y_s) \D Z_s dB_s \right\}_{0 \leq t \leq T}$$ 
is a uniformly integrable martingale.
Therefore with (\ref{eq2_27_06}), and by choosing $\al$ such that
$\displaystyle \eps^{-\frac{p}{p-1}} \frac{p-1}{p} - p \al \leq 0$,
we obtain:
\begin{eqnarray} \label{eq3_27_06}
&& c(p) \E \left[ \int_0^T e^{\al ps } |\D Y_s|^{p-2} \ind_{\D Y_s
\neq 0} |\D Z_s|^2 ds  \right] \leq \frac{\kappa^p
2^{p-1}T\eps^p}{p} \E \left[ \left( \sup_{s \in [t,u]}  e^{\al p s}
|\D U_s|^{p} \right) \right. \\ \nonumber && \qquad \left. + \left(
\int_t^u e^{2 \al s} |\D V_s|^{2} ds \right)^{p/2} \right]
\end{eqnarray}
and
\begin{eqnarray} \label{eq4_27_06}
\E \left[ \sup_{t \in [0,T]} e^{\al p t} |\D Y_t|^p \right] & \leq &
\frac{\kappa^p 2^{p-1}T\eps^p}{p} \E \left[ \left( \sup_{s \in
[t,u]} e^{\al p s}  |\D U_s|^{p} \right) \right. \\ \nonumber &&
\left. + \left( \int_t^u e^{2 \al s} |\D V_s|^{2} ds \right)^{p/2}
\right] + p \E \left[ \langle M,M \rangle_T^{1/2} \right] .
\end{eqnarray}
For the last inequality we have made use of BDG's one. But
\begin{eqnarray*}
&& \E \left[ \langle M,M \rangle_T^{1/2} \right]  \leq \E \left[ \left(\sup_{t \in [0,T]}  e^{\al p t/2} |\D Y_t|^{p/2} \right) \left(  \int_0^T e^{\al ps } |\D Y_s|^{p-2} \ind_{\D Y_s \neq 0} |\D Z_s|^2  \right)^{1/2}  \right] \\
&&  \leq \frac{1}{2p}  \E \left[ \sup_{t \in [0,T]}  e^{\al p t} |\D
Y_t|^{p} \right] + \frac{p}{2} \E \left[ \int_0^T e^{\al ps } |\D
Y_s|^{p-2} \ind_{\D Y_s \neq 0} |\D Z_s|^2 \right].
\end{eqnarray*}
Plugging now that inequality in (\ref{eq3_27_06}) and
(\ref{eq4_27_06}) to obtain:
\begin{eqnarray} \label{eq2_26_09}
&&\frac{1}{2} \E \left[ \sup_{t \in [0,T]} e^{\al p t} |\D Y_t|^p \right] \leq \frac{\kappa^p  2^{p-1}T\eps^p}{p} \E \left[ \left(
\sup_{s \in [0,T]}  e^{\al p s}  |\D U_s|^{p} \right) \right. \\ \nonumber
&& \hspace{7cm} \left. + \left( \int_t^u   e^{2 \al s} |\D V_s|^{2} ds \right)^{p/2} \right] \\ \nonumber 
&&\qquad  + \frac{p\kappa^p  2^{p-1}T\eps^p}{2c(p)} \E \left[
\left( \sup_{s \in [0,T]}  e^{\al p s}  |\D U_s|^{p} \right) +
\left( \int_t^u   e^{2 \al s} |\D V_s|^{2} ds \right)^{p/2} \right].
\end{eqnarray}
Finally it is enough to choose
$$C_{\al} =  \frac{2\kappa^p  2^{p-1}T\eps^p}{p} \left( 1 + \frac{p2}{2 c(p)}
\right) \quad \mbox{and} \quad \eps^{-\frac{p}{p-1}} \frac{p-1}{p}
\leq p \al.$$Thus the proof is complete.
\end{proof}
\bigskip

We next focus on the same estimate for $\D Z$.

\begin{lem} \label{control_on_mart_part}
There exists $\beta \in \R$ and a constant $C'_{\beta}$ such that
\begin{eqnarray}\label{control_z}\|e^{\beta .} \D Z \|_{\spam^{p}} \leq C'_{\beta}  (\|
e^{\frac{\beta}{2} .} \D U \|_{\spas^{p}} +  \|e^{\beta .} \D V
\|_{\spam^{p}} ).\end{eqnarray}
\end{lem}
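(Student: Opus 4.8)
The plan is to deduce the $\spam^{p}$-estimate for $\D Z$ from the weighted quadratic bound (\ref{eq3_27_06}) already obtained inside the proof of Lemma \ref{control_on_snell_env}, by an interpolation (Hölder--Young) argument that upgrades the $|\D Y|^{p-2}$-weighted energy into the genuine energy $\int_0^T e^{2\be s}|\D Z_s|^2\,ds$. The one preliminary fact I would record is that $\int_0^T \ind_{\D Y_s = 0}|\D Z_s|^2\,ds = 0$ almost surely (occupation-times formula), so that the indicator $\ind_{\D Y_s\neq 0}$ may be inserted freely inside $\int_0^T e^{2\be s}|\D Z_s|^2\,ds$.

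First I would split the exponential weight as $e^{2\be s} = (e^{cs})^{2-p}e^{\al p s}$, with the exponents tied by $c(2-p)+\al p = 2\be$, and write, on $\{\D Y_s\neq 0\}$,
$$e^{2\be s}|\D Z_s|^2 = \left( e^{cs}|\D Y_s| \right)^{2-p} e^{\al p s}|\D Y_s|^{p-2}\ind_{\D Y_s\neq 0}|\D Z_s|^2 .$$
Pulling the first factor out in supremum form gives
$$\int_0^T e^{2\be s}|\D Z_s|^2\,ds \leq \left( \sup_{s\in[0,T]} e^{cs}|\D Y_s| \right)^{2-p}\int_0^T e^{\al p s}|\D Y_s|^{p-2}\ind_{\D Y_s\neq 0}|\D Z_s|^2\,ds .$$
Next I would raise this to the power $p/2$, take expectations, and apply Young's inequality with the conjugate exponents $\frac{2}{2-p}$ and $\frac{2}{p}$ (the relation $\frac{2-p}{2}+\frac{p}{2}=1$ is exactly what lets the two factors separate). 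This bounds $\E\left[\left(\int_0^T e^{2\be s}|\D Z_s|^2\,ds\right)^{p/2}\right]$ by a multiple of $\E\left[\sup_{s\in[0,T]}e^{cps}|\D Y_s|^{p}\right]=\|e^{c\cdot}\D Y\|_{\spas^p}^{p}$ plus a multiple of $\E\int_0^T e^{\al p s}|\D Y_s|^{p-2}\ind_{\D Y_s\neq 0}|\D Z_s|^2\,ds$.

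The two resulting terms are then handled by the estimates already at our disposal: the second is precisely the left-hand side of (\ref{eq3_27_06}), hence controlled by $\|e^{\al\cdot}\D U\|_{\spas^p}^{p}+\|e^{\al\cdot}\D V\|_{\spam^p}^{p}$, while the first is controlled through inequality (\ref{control_y}) of Lemma \ref{control_on_snell_env}, which bounds $\|e^{c\cdot}\D Y\|_{\spas^p}$ in terms of $\D U$ and $\D V$. Collecting the contributions and taking a $p$-th root then yields an inequality of the announced shape, and one finally selects $\be$ large enough that the multiplicative constant $C'_{\be}$ is small, this smallness being what the subsequent contraction argument for $\Phi$ requires. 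The step I expect to be the main obstacle is precisely the bookkeeping of the exponential weights: one must choose the interpolation exponent $c$ together with the level $\al$ at which (\ref{eq3_27_06}) and (\ref{control_y}) are invoked, subject to the constraint $c(2-p)+\al p = 2\be$, so that after the Young splitting $\D U$ ends up measured in the supremum norm with weight $\frac{\be}{2}$ and $\D V$ in the $\spam^p$ norm with weight $\be$; reconciling the weight carried by the $\D Y$-supremum factor with these target weights, rather than the analytic estimates themselves, is the delicate point, and an alternative route applying Corollary \ref{corol_briand}-type It\^o expansion directly to $e^{2\be s}|\D Y_s|^2$ (as in Lemma \ref{estimate_on_Z}, using that the reflection term $\int e^{2\be s}\D Y_s\,d(\D K_s)\leq 0$ because both solutions reflect on the same barrier) produces the same asymmetry through two different Young apportionings of the cross term $e^{2\be s}|\D Y_s|(|\D U_s|+|\D V_s|)$.
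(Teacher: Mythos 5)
Your primary argument is correct, but it follows a genuinely different route from the paper's. The paper's proof of Lemma \ref{control_on_mart_part} is essentially the ``alternative route'' you sketch at the end: It\^o's formula is applied to $e^{\be s}|\D Y_s|^2$ up to a localizing sequence $\tau_n$, the reflection term is discarded via $\D Y_s\,d(\D K_s)\le 0$, the cross term $2\D Y_s\D f_s$ is split by Young's inequality with a parameter $\nu$ satisfying $\kappa^2/\nu\le\be$ so that the $|\D Y|^2$ integral drops out, and then the resulting pathwise inequality is raised to the power $p/2$, the term $\E\bigl|\int e^{\be s}\D Y_s\D Z_s\,dB_s\bigr|^{p/2}$ is absorbed by BDG, Fatou's lemma removes the localization, and Lemma \ref{control_on_snell_env} eliminates $\E[\sup_t e^{\be p t/2}|\D Y_t|^p]$. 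Your interpolation route instead recycles the two estimates already proved in Lemma \ref{control_on_snell_env} --- the degenerate energy bound (\ref{eq3_27_06}) and the supremum bound (\ref{control_y}) --- through the factorization $e^{2\be s}|\D Z_s|^2=(e^{cs}|\D Y_s|)^{2-p}\,e^{\al ps}|\D Y_s|^{p-2}\ind_{\D Y_s\neq0}|\D Z_s|^2$ and Young's inequality with exponents $\tfrac{2}{2-p}$ and $\tfrac2p$; the only extra ingredient is the occupation-times fact $\int_0^T\ind_{\D Y_s=0}|\D Z_s|^2ds=0$, which is valid here because $\D Y$ is a continuous semimartingale with $d\langle\D Y\rangle_s=|\D Z_s|^2ds$. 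This buys you a shorter proof with no second application of It\^o's formula, no second BDG absorption, and no need to revisit the reflection term, at the price of invoking local time. On the weight bookkeeping that worries you: the choice $c=\al=\be$ satisfies $c(2-p)+\al p=2\be$ identically and delivers the bound with both $\D U$ and $\D V$ at weight $\be$, which is exactly what Proposition \ref{contract} uses (the paper's own final display (\ref{eq3_26_09}) carries weight $\be$ on $\D U$ as well; the $\be/2$ in the lemma statement is an internal inconsistency of the paper, not an obstacle for you). One small correction: the smallness of $C'_\be$ needed for the contraction does not come from taking $\be$ large by itself, but from the factors $\eps^p$ (with $\al$ enlarged accordingly) already present in (\ref{eq3_27_06}) and in $C_\al$; your fixed Young constants $\tfrac{2-p}2$ and $\tfrac p2$ merely multiply these and do no harm.
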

\begin{proof}
For each integer $n\geq 1$ let us set: $$\tau_n = \inf \left\{ t \in
[0,T], \ \int_0^t |\D Z_s |^2 ds \right\} \wedge T.$$ Therefore
using It\^o's formula leads to
\begin{eqnarray*}
|\D Y_0|^2 + \int_0^{\tau_n} e^{\beta s} |\D Z_s|^2 ds & = & e^{\beta \tau_n} |\D Y_{\tau_n}|^2 + 2 \int_0^{\tau_n} e^{\beta s} \D Y_s \D f_s ds - \beta \int_0^{\tau_n} e^{\beta s} |\D Y_s|^2 ds \\
&+ & 2 \int_0^{\tau_n} e^{\beta s} \D Y_s d(\D K_s) - 2
\int_0^{\tau_n} e^{\beta s} \D Y_s \D Z_s dB_s.
\end{eqnarray*}
But for any $s \in [0,T]$, $\D Y_s d(\D K_s) \leq 0$ a.s. On the
other hand since $f$ is a Lipschitz function then for every $\nu
> 0$
\begin{eqnarray*}
|\D Y_0|^2 + \int_0^{\tau_n} e^{\beta s} |\D Z_s|^2 ds & \leq & e^{\beta \tau_n} |\D Y_{\tau_n}|^2 + \left(  \frac{\kappa^2}{\nu}  - \beta \right)  \int_0^{\tau_n} e^{\beta s} |\D Y_s|^2 ds \\
& + & \nu \int_0^{\tau_n} e^{\beta s} (|\D U_s|^2 + |\D V_s|^2) ds -
2 \int_0^{\tau_n} e^{\beta s} \D Y_s \D Z_s dB_s.
\end{eqnarray*}
Now if $\frac{\kappa^2}{\nu} \leq \beta$ we obtain:
\begin{eqnarray*}
\int_0^{\tau_n} e^{\beta s} |\D Z_s|^2 ds & \leq & e^{\beta \tau_n} |\D Y_{\tau_n}|^2 + (\nu T) \left[ \sup_{t \in [0,T] } e^{\beta t} |\D U_t|^2 \right]  \\
& + & \nu  \int_0^{\tau_n} e^{\beta s} |\D V_s|^2 ds  + 2 \left|
\int_0^{\tau_n} e^{\beta s} \D Y_s \D Z_s dB_s \right|.
\end{eqnarray*}
It follows that
\begin{eqnarray*}
\left(  \int_0^{\tau_n} e^{\beta s} |\D Z_s|^2 ds \right)^{p/2} & \leq & 2^{(p-1)} \left\{ e^{\beta \tau_n p/2} |\D Y_{\tau_n}|^p + (\nu T)^{p/2} \left[ \sup_{t \in [0,T] } e^{\beta t p/2} |\D U_t|^p \right] \right. \\
& + & \left.  \nu^{p/2}  \left( \int_0^{\tau_n} e^{\beta s} |\D
V_s|^2 ds \right)^{p/2} + 2^{p/2} \left| \int_0^{\tau_n} e^{\beta s}
\D Y_s \D Z_s dB_s \right|^{p/2} \right\}.
\end{eqnarray*}
But by the BDG inequality we have:
\begin{eqnarray*}
&& \E \left| \int_0^{\tau_n} e^{\beta s} \D Y_s \D Z_s dB_s \right|^{p/2} \leq \bar{c}_p\E \left[ \left( \int_0^{\tau_n} e^{2 \beta s} |\D Y_s|^2 |\D Z_s|^2 ds \right)^{p/4} \right] \\
&& \qquad \leq \bar{c}_p2  2^{3p/2} \E \left[ \sup_{t \in [0,T]} e^{
\beta s p/2} |\D Y_s|^p \right] + 2^{-3p/2} \E \left[ \left(
\int_0^T e^{\beta s} |\D Z_s|^2 ds \right)^{p/2} \right] .
\end{eqnarray*}
Therefore plugging this inequality in the previous one to obtain:
\begin{eqnarray*}
&& \frac{1}{2} \E \left( \int_0^{\tau_n} e^{\beta s} |\D Z_s|^2 ds  \right)^{p/2} \leq  2^{3p-1}
\bar{c}_p2 \E \left[ \sup_{t \in [0,T]}  e^{\beta s p/2} |\D Y_s|^p \right]+\\
&& \qquad 2^{(p-1)} \E\left\{ e^{\beta \tau_n p/2} |\D Y_{\tau_n}|^p
+ (\nu T)^{p/2} \left[ \sup_{t \in [0,T] } e^{\beta t p/2} |\D
U_t|^p \right] \right. \\ 
&& \hspace{6cm} \left. + \nu^{p/2}  \left( \int_0^{\tau_n} e^{\beta s} |\D
V_s|^2 ds \right)^{p/2} \right\}.
\end{eqnarray*}
Next using Fatou's Lemma yields:
\begin{eqnarray*}
\frac{1}{2} \E \left(  \int_0^{T} e^{\beta s} |\D Z_s|^2 ds \right)^{p/2} & \leq &  \nu^{p/2} 2^{(p-1)} \E \left\{ T^{p/2} \left[ \sup_{t \in [0,T] } e^{\beta t p/2} |\D U_t|^p \right] \right. \\
& + & \left.  \left( \int_0^{T} e^{\beta s} |\D V_s|^2 ds
\right)^{p/2} \right\} + 2^{3p-1} \bar{c}_p2 \E \left[ \sup_{t \in
[0,T]}  e^{ \beta p/2 s} |\D Y_s|^p \right]
\end{eqnarray*}
Finally choosing $\beta$ great enough (recall that $\beta> 0$) and
using Lemma \ref{control_on_snell_env}, to obtain :
\begin{equation} \label{eq3_26_09}
\E \left(  \int_0^{T} e^{\beta s} |\D Z_s|^2 ds \right)^{p/2} \leq
C'_{\beta} \E \left[ \sup_{t \in [0,T] } e^{\beta t p} |\D U_t|^p +
\left( \int_0^{T} e^{\beta s} |\D V_s|^2 ds \right)^{p/2} \right]
\end{equation}
with
$$C'_{\beta} = 2^{3p} \ \bar{c}_p2 \ C_{\beta} +  \nu^{p/2} 2^{p} \max(T^{p/2},1) \ \mbox{and} \ \frac{\kappa^2}{\nu} \leq \beta.$$
\end{proof}

As a result of Lemmas 1 $\&$ 2 we have:
\begin{prop}\label{contract}
There exist two constants $\gamma$ and  $\mathcal{C} < 1$ such that:
\begin{eqnarray*}
&& \E \left[ \sup_{t \in [0,T] } e^{\gamma t p} |\D Y_t|^p \right] + \E \left(  \int_0^{T} e^{\gamma s} |\D Z_s|^2 ds \right)^{p/2} \\
&& \quad \leq \mathcal{C} \left\{ \E \left[ \sup_{t \in [0,T] }
e^{\gamma t p} |\D U_t|^p \right] + \E \left( \int_0^{T} e^{\gamma
s} |\D V_s|^2 ds \right)^{p/2} \right\}.
\end{eqnarray*}
\end{prop}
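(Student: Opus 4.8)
The plan is to glue Lemmas~\ref{control_on_snell_env} and~\ref{control_on_mart_part} together at a single common exponential rate $\gamma$, exploiting that both of the constants they produce can be driven to $0$ by shrinking the free parameters $\eps$ and $\nu$, the only price being that the rate $\gamma$ must then be taken large. Concretely, the constant of Lemma~\ref{control_on_snell_env} is $C_{\al}=\frac{2\ka^{p}2^{p-1}T\eps^{p}}{p}\big(1+\frac{2p}{2c(p)}\big)$, which is $O(\eps^{p})$, under the constraint $\eps^{-\frac{p}{p-1}}\frac{p-1}{p}\leq p\gamma$; the constant of Lemma~\ref{control_on_mart_part} is $C'_{\be}=2^{3p}\bar c_{p}2\,C_{\be}+\nu^{p/2}2^{p}\max(T^{p/2},1)$, which is $O(\eps^{p})+O(\nu^{p/2})$, under the constraint $\ka^{2}/\nu\leq\gamma$. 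Since neither constraint imposes an upper bound on $\gamma$, the two smallness requirements and the two lower bounds on $\gamma$ are mutually compatible: one first fixes $\eps$ and $\nu$ small, and only afterwards chooses $\gamma$ large enough to satisfy both thresholds.

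Then I would run the two estimates with $\al=\be=\gamma$ and add them. Working directly with $p$-th powers (which is in fact the form the proofs of the two lemmas establish, e.g.\ through the passage from norms to $p$-th powers via the subadditivity of $x\mapsto x^{1/p}$ for $p>1$), Lemma~\ref{control_on_snell_env} yields
$$\E\Big[\sup_{t\in[0,T]}e^{\gamma p t}|\D Y_t|^{p}\Big]\leq C(\eps)\,\Big\{\E\Big[\sup_{t\in[0,T]}e^{\gamma p t}|\D U_t|^{p}\Big]+\E\Big(\int_0^T e^{\gamma s}|\D V_s|^{2}\,ds\Big)^{p/2}\Big\},$$
while Lemma~\ref{control_on_mart_part} yields the companion bound
$$\E\Big(\int_0^T e^{\gamma s}|\D Z_s|^{2}\,ds\Big)^{p/2}\leq C'(\eps,\nu)\,\Big\{\E\Big[\sup_{t\in[0,T]}e^{\gamma p t}|\D U_t|^{p}\Big]+\E\Big(\int_0^T e^{\gamma s}|\D V_s|^{2}\,ds\Big)^{p/2}\Big\},$$
with $C(\eps)\to0$ as $\eps\to0$ and $C'(\eps,\nu)\to0$ as $\eps,\nu\to0$. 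Summing the two displays and setting $\mathcal{C}=C(\eps)+C'(\eps,\nu)$ gives exactly the asserted inequality, and it then suffices to pick $\eps$, then $\nu$, small enough that $\mathcal{C}<1$, and finally $\gamma$ large enough to meet the two thresholds above.

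The step I expect to be the real work is reconciling the exponential weights carried by the $\D U$ and $\D V$ terms on the right-hand sides of the two lemmas, so that they coincide with the single weighted quantity appearing on the right of Proposition~\ref{contract}. The two auxiliary lemmas are stated with slightly different exponents on these terms (e.g.\ $e^{\frac{\gamma}{2}\cdot}$ versus $e^{\gamma\cdot}$ on $\D U$, and $e^{2\gamma s}$ versus $e^{\gamma s}$ inside the $\D V$ integral), and one cannot simply bound one weight by another up to a fixed factor: on $[0,T]$ such comparison factors are of order $e^{\gamma p T}$ and would blow up precisely in the regime $\gamma\to+\infty$ that we need. The clean remedy, which I would implement, is to carry one and the same weight $e^{\gamma\cdot}$ consistently through the proofs of both lemmas rather than patching exponents at the end; with the weights genuinely identical on both sides, the comparison factors disappear and $\mathcal{C}$ retains its smallness. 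Once the estimate is in hand, the equivalence of $\|\cdot\|$ with the unweighted $\spaprod^{p}$ norm (because $e^{\gamma t}$ is bounded above and below on $[0,T]$) makes $\Phi$ a genuine contraction, and the Banach fixed point theorem then delivers the unique $L^p$-solution.
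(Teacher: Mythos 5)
Your proposal follows essentially the same route as the paper: it combines Lemmas \ref{control_on_snell_env} and \ref{control_on_mart_part}, observes that $C_{\al}$ and $C'_{\beta}$ can be driven to zero by shrinking $\eps$ and $\nu$ while the constraints on $\al$ and $\beta$ are only lower bounds (so a common rate $\gamma=\max\{\al,\beta\}$ can be chosen afterwards), and sums the two estimates to get $\mathcal{C}=C_{\gamma}+C'_{\gamma}<1$. Your additional remark about reconciling the exponential weights on the $\D U$ and $\D V$ terms is well taken --- the paper glosses over this bookkeeping, and your fix of carrying one consistent weight through the proofs of both lemmas (rather than converting exponents at the end, which would cost factors blowing up as $\gamma\to\infty$) is the right one.
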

\begin{proof}
Recall that in the proofs of Lemmas \ref{control_on_snell_env} and
\ref{control_on_mart_part} we have required that the constants
$\eps$, $\al$, $\nu$ and $\beta$ should satisfy:
$$\eps^{-\frac{p}{p-1}} \frac{p-1}{p} \leq  p \al, \ \frac{\kappa^2}{\nu} \leq \beta$$
$$C_{\al} =  \frac{2\kappa^p  2^{p-1}T\eps^p}{p} \left( 1 + \frac{p2}{4 c(p)} \right),$$
$$C'_{\beta} = 2^{3p} \ \bar{c}_p2 \ C_{\al} +  \nu^{p/2} 2^{p} \max(T^{p/2},1).$$
So we can choose $\eps$ and $\nu$ in such a way that $C_{\al } <
1/2$ and $C'_{\beta} < 1/2$. Therefore the inequalities
(\ref{control_y}) and (\ref{control_z}) still valid if we replace
$\al$ and $\beta$ with $\gamma=\max\{\al,\beta\}$. Also it is enough
to choose $\mathcal{C} = C_{\gamma} + C'_{\gamma} < 1$ and the claim
is proved.
\end{proof}
\bigskip

We now give the main result of this section.

\begin{thm} Under [H1]-[H3], there exists a unique $L^p$-solution for the reflected BSDE associated with $(f(t,y,z),\xi,L)$, i.e., there exists a triple of processes $(Y,Z,K)$ such that:
\begin{eqnarray*}&& Y\in \spas^p, Z\in \spam^p, K\in \spas^p \mbox{ non-decreasing and }K_0=0\\
&&Y_t = \xi + \int_t^{T} f(r,Y_r,Z_r) dr + K_T - K_t - \int_t^T Z_r dB_r, \,\,\forall t\leq T;\\
&&Y\geq L \mbox{ and }(Y_t-K_t)dK_t=0,\,\,\forall t\leq T.
\end{eqnarray*}
\end{thm}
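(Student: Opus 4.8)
The plan is to observe that all the analytic effort has already been spent in Lemmas \ref{control_on_snell_env}--\ref{control_on_mart_part} and Proposition \ref{contract}, so that the statement collapses to a fixed-point argument on the map $\Phi$. First I would fix the functional-analytic framework. Since $\spaprod^p=\spas^p(\R)\times\spam^p(\R^d)$ is a Banach space and the interval $[0,T]$ is bounded, the weight $e^{\ga t}$ stays between two strictly positive constants; consequently the quantity
$$\Vert(Y,Z)\Vert_\ga:=\Big(\E\big[\sup_{t\le T}e^{\ga tp}|Y_t|^p\big]+\E\big(\int_0^T e^{\ga s}|Z_s|^2\,ds\big)^{p/2}\Big)^{1/p},$$
being an $\ell^p$-combination of (weighted versions of) the $\spas^p$- and $\spam^p$-norms, is itself a norm on $\spaprod^p$ equivalent to the canonical one. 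Hence $(\spaprod^p,\Vert\cdot\Vert_\ga)$ is complete.

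Next I would verify that $\Phi$ indeed maps $\spaprod^p$ into itself. For fixed $(U,V)\in\spaprod^p$ the frozen generator $g(t):=f(t,U_t,V_t)$ no longer depends on $(y,z)$, so Theorem \ref{eqssyz} applies once we know $g$ meets the integrability demanded there. This is the only genuine verification I anticipate: by the Lipschitz assumption (H2)(ii), $|g(t)|\le|f(t,0,0)|+\ka(|U_t|+|V_t|)$, whence $\int_0^T|g(t)|\,dt\le\int_0^T|f(t,0,0)|\,dt+\ka T\sup_{t\le T}|U_t|+\ka T^{1/2}\big(\int_0^T|V_t|^2\,dt\big)^{1/2}$; since $U\in\spas^p$ and $V\in\spam^p$, the right-hand side lies in $L^p(\Om)$, so Theorem \ref{eqssyz} produces a unique $\Phi(U,V)=(Y,Z)\in\spaprod^p$ together with an increasing $K\in\spas^p$ with $K_0=0$.

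Then I would invoke Proposition \ref{contract}, which in the notation above reads precisely $\Vert\Phi(U,V)-\Phi(U',V')\Vert_\ga^{\,p}\le\mathcal{C}\,\Vert(U,V)-(U',V')\Vert_\ga^{\,p}$ with $\mathcal{C}<1$, i.e. $\Phi$ is Lipschitz on $(\spaprod^p,\Vert\cdot\Vert_\ga)$ with constant $\mathcal{C}^{1/p}<1$. As the space is complete, Banach's fixed-point theorem furnishes a unique $(Y,Z)$ with $\Phi(Y,Z)=(Y,Z)$.

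Finally I would unwind the fixed-point identity. Applying $\Phi$ to $(Y,Z)$ returns the same pair together with its associated increasing process $K\in\spas^p$, $K_0=0$, and by the very definition of $\Phi$ the frozen generator evaluated at the fixed point equals $f(s,Y_s,Z_s)$, so that for every $t\le T$
$$Y_t=\xi+\int_t^T f(s,Y_s,Z_s)\,ds+K_T-K_t-\int_t^T Z_s\,dB_s,\qquad Y_t\ge L_t,\qquad \int_0^T(Y_s-L_s)\,dK_s=0.$$
Thus $(Y,Z,K)$ is an $L^p$-solution of the reflected BSDE associated with $(f(t,y,z),\xi,L)$, giving existence; uniqueness is the content of the theorem already proved in Section 2. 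I expect no conceptual obstacle: the work reduces to the bookkeeping of the admissibility check for the frozen generator and to stating the contraction of Proposition \ref{contract} against the correct ($\ell^p$-combination) product norm so that Banach's theorem applies on a complete space.
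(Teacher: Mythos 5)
Your proposal is correct and follows essentially the same route as the paper: both reduce the theorem to Banach's fixed-point theorem for the map $\Phi$ on $\spaprod^p$ equipped with the equivalent weighted norm, using Proposition \ref{contract} for the contraction and Theorem \ref{eqssyz} for the well-definedness of $\Phi$. Your write-up merely makes explicit two points the paper leaves implicit (the integrability check on the frozen generator $f(t,U_t,V_t)$ and the completeness of the weighted norm), which is sound bookkeeping rather than a different argument.
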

\begin{proof}
Thanks to Proposition \ref{contract}, the mapping $\Phi$ is a
contraction in the Banach space ${\cal B}^p$ endowed with the
equivalent norm $$\|(Y,Z)\|^p_{\gamma,p}=\E\left[(\sup_{t\leq
T}e^{\gamma s}|Y_s|)^p\right ]+\E\left [\left(\int_0^Te^{\gamma
s}||Z_s|^2ds\right)^{p/2}\right].$$ Therefore it has a fixed point
$(Y,Z)$ which in combination with the associated $K$ is the unique
solution of the reflected BSDE associated with $(f(t,y,z,),\xi,L)$.
\end{proof}

\section{Existence via Penalization} \label{section_penaliz}

We are going now to deal with the issue of existence of the
$L^p$-solution for the reflected BSDE associated with
$(f(t,y,z),\xi,L)$ in using the penalization method. Actually for
$n\geq 1$ let us consider $(Y^n,Z^n) \in \spaprod^p$ the unique
solution of the following BSDE:
$$\forall t \in [0,T], \, Y^n_t = \xi + \int_t^T f(s,Y^n_s,Z^n_s) ds + n \int_t^T (Y^n_s - L_s)^- ds - \int_t^T Z^n_s dB_s.$$
Indeed thanks to the result by Briand et al. \cite{briand}, this
solution exists and is unique. Next let us define $K^n$ by:
$$\forall t \in [0,T], \ K^n_t = n \int_0^t (Y^n_s - L_s)^- ds .$$

We first give some estimates for the processes $Y^n$, $Z^n$ and
$K^n$. Actually we have:
\begin{prop} \label{apriori_Lp_estimate}
There exists some constants $\al \in \R$  and $C$ which do not
depend on $n$ such that:
$$\E \left[ \sup_{t \in [0,T]} \left( e^{\al p s} |Y^n_s|^p \right) + \left( \int_0^T  e^{2 \al s} |Z^n_s|^2 \right)^{p/2} + |K^n_T|^p
\right] \leq C.$$
\end{prop}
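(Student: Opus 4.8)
The plan is to observe that, for each fixed $n$, the triple $(Y^n,Z^n,K^n)$ solves a BSDE of exactly the type treated in Lemmas \ref{estimate_on_Z} and \ref{estimate_on_Y}, and that the constants produced by those lemmas depend only on $p$ and $\ka$ — in particular not on the penalization parameter $n$. The crucial point is that, although the penalized generator $f(s,y,z)+n(y-L_s)^-$ has a Lipschitz constant of order $n$, we never treat the penalization term as part of the coefficient; instead we absorb it into the continuous non-decreasing process $K^n_t=n\int_0^t(Y^n_s-L_s)^-ds$. The a priori bounds of Section 2 are insensitive to the particular increasing process $K$, so they apply here with $n$-independent constants.

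First I would rewrite the penalized equation in the form
$$Y^n_t = \xi + \int_t^T f(s,Y^n_s,Z^n_s)\,ds + K^n_T - K^n_t - \int_t^T Z^n_s\,dB_s,\qquad t\le T,$$
with $K^n$ continuous, non-decreasing and $K^n_0=0$. Existence of $(Y^n,Z^n)\in\spaprod^p$ is guaranteed by Briand et al.\ \cite{briand}, and $K^n_T\in L^p$ follows from the representation $K^n_T=Y^n_0-\xi-\int_0^T f(s,Y^n_s,Z^n_s)\,ds+\int_0^T Z^n_s\,dB_s$ together with the BDG inequality. Next I would check the hypothesis of Lemma \ref{estimate_on_Y}: since $dK^n_s=n(Y^n_s-L_s)^-ds$ is carried by the set $\{Y^n_s\le L_s\}$, one has $dK^n_s=\ind_{[Y^n_s\le L_s]}dK^n_s$, which is precisely what that proof requires.

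I would then apply Lemma \ref{estimate_on_Y} to $(Y^n,Z^n,K^n)$, yielding
$$\E\sup_{t\in[0,T]}|Y^n_t|^p \le C_{\ka,p}\left[\E|\xi|^p+\E\Big(\int_0^T|f(s,0,0)|\,ds\Big)^p+\E\Big(\sup_{t\in[0,T]}(L_s^+)^p\Big)\right],$$
whose right-hand side is finite by (H1)--(H3) and independent of $n$; on the bounded interval $[0,T]$ the weight $e^{\al p s}$ is bounded above and below, so this also controls $\E\sup_t e^{\al p s}|Y^n_s|^p$. Lemma \ref{estimate_on_Z} then gives the uniform bound on $\E\big(\int_0^T|Z^n_s|^2\,ds\big)^{p/2}$ in terms of the already-controlled $\E\sup_t|Y^n_t|^p$ and $\E\big(\int_0^T|f(s,0,0)|\,ds\big)^p$. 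Finally, inequality (\ref{eq2-04-03}), established within the proof of Lemma \ref{estimate_on_Y} with an $n$-independent constant, applied at $t=0$ gives $\E|K^n_T|^p\le C_{p,\ka}\,\E\big[\sup_t|Y^n_t|^p+(\int_0^T|f(s,0,0)|\,ds)^p\big]$, and hence a uniform bound on $K^n$.

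The main obstacle here is conceptual rather than computational: one must resist estimating the penalized BSDE by viewing $n(y-L)^-$ as part of a Lipschitz coefficient, which would yield constants blowing up with $n$. The whole design of the Section 2 estimates is that they are uniform over the class of admissible increasing processes $K$ satisfying the Skorokhod-type support condition, and the explicit form of $K^n$ guarantees membership in this class. Collecting the three displayed bounds, and using once more that $e^{2\al s}$ is bounded on $[0,T]$, then yields the claimed estimate with a single constant $C$ independent of $n$.
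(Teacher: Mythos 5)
Your proposal is correct and follows essentially the same route as the paper: the authors likewise absorb the penalization term into the increasing process $K^n_t=n\int_0^t(Y^n_s-L_s)^-\,ds$, observe that $\int_0^T(Y^n_s-L_s)^+\,dK^n_s=0$ so that the support hypothesis of Lemma \ref{estimate_on_Y} holds, and then invoke Lemmas \ref{estimate_on_Z}, \ref{estimate_on_Y} and inequality (\ref{eq2-04-03}) to get $n$-independent bounds. Your write-up merely makes explicit the verification that the paper leaves implicit.
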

\begin{proof} It is obtained thanks to Lemmas \ref{estimate_on_Z} and \ref{estimate_on_Y}, in combination with inequality
(\ref{eq2-04-03}). Indeed we have
$$\int_0^T (Y^n_s - L_s)^+ dK^n_s = n\int_0^T (Y^n_s - L_s)^+ (Y^n_s - L_s)^- ds = 0.$$
\end{proof}

We are now going to show that the sequence of processes $(Y^n,Z^n,K^n)$ converges to the $L^p$-solution of the BSDE associated with $(f(t,y,z),\xi,L)$.
\bigskip

First thanks to comparison we have $Y^n\leq Y^{n+1}$, for any $n\geq
0$. Note that here comparison can be obtained as usual (see e.g.
\cite{KPQ97}).Therefore there exists a $\cal P$-measurable process
$Y$ such that for any $t\leq T$, $Y_t=\lim_{n\rightarrow
\infty}\nearrow Y^n_t$. Additionally thanks to Fatou's Lemma we have
$$\E[|Y_t|^p]<\infty,\,\,\forall t\leq T$$
since $\E\sup_{t\leq T}|Y^n_t|^p\leq C$,$\forall n\geq 0$.
\medskip

We now focus on the following:
\begin{lem}: $P$-a.s., $Y\geq L$, $Y\in \spas^p$ and  $\E[(\sup_{s\leq T}(Y^n_s-L_s)^-)^p]\rightarrow 0$ as $n\rightarrow \infty$.
\end{lem}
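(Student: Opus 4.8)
The plan is to establish the three assertions in sequence, since they build on one another. The penalized processes satisfy $Y^n_t = \xi + \int_t^T f(s,Y^n_s,Z^n_s)\,ds + K^n_T - K^n_t - \int_t^T Z^n_s\,dB_s$ with $K^n_t = n\int_0^t (Y^n_s-L_s)^-\,ds$, and by Proposition \ref{apriori_Lp_estimate} the quantities $\E\sup_{t\leq T}|Y^n_t|^p$ and $\E|K^n_T|^p$ are bounded uniformly in $n$. The monotone limit $Y_t=\lim_n\uparrow Y^n_t$ exists pointwise. First I would show $Y\geq L$. The natural route is to control the penalization term: from the uniform bound $\E|K^n_T|^p\leq C$ one gets $\E\left(n\int_0^T(Y^n_s-L_s)^-\,ds\right)^p\leq C$, so $\E\left(\int_0^T(Y^n_s-L_s)^-\,ds\right)^p\leq C/n^p\to 0$. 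Hence $\int_0^T(Y^n_s-L_s)^-\,ds\to 0$ in $L^p$, and along a subsequence $(Y^n_s-L_s)^-\to 0$ for a.e.\ $(s,\omega)$; since $Y^n\uparrow Y$ this forces $(Y_s-L_s)^-=0$, i.e.\ $Y_s\geq L_s$ a.e., and then for all $t$ by continuity of $L$ and the fact that $Y$ will turn out to be continuous (or by working with the limit along the increasing sequence directly).

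Next I would upgrade the pointwise convergence to the uniform statement $\E[(\sup_{s\leq T}(Y^n_s-L_s)^-)^p]\to 0$, which is the genuinely hard part and the heart of the lemma. The difficulty is that pointwise monotone convergence plus an $L^p$ bound on the time-integral does not by itself give convergence of the \emph{supremum} of the negative parts. The standard device here is to compare $Y^n$ with the solution of an auxiliary reflected BSDE. Concretely, let $(\bar Y,\bar Z,\bar K)$ be the $L^p$-solution of the reflected BSDE associated with the frozen generator $(f(s,Y^n_s,Z^n_s))_{s\leq T}$, terminal value $\xi$ and barrier $L$; this exists by Theorem \ref{eqssyz}. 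By a comparison argument $Y^n_s\leq \bar Y_s$, and since $\bar Y\geq L$ one obtains $(Y^n_s-L_s)^-\leq (\bar Y_s - Y^n_s)$, so that $\sup_{s\leq T}(Y^n_s-L_s)^-\leq \sup_{s\leq T}(\bar Y_s - Y^n_s)$. The task then reduces to estimating $\E\sup_{s\leq T}(\bar Y_s-Y^n_s)^p$ via the a priori estimate of Lemma \ref{estim_apriori1} (or by directly applying Corollary \ref{corol_briand} to $\bar Y - Y^n$), controlling it in terms of the vanishing penalization. The main obstacle will be setting up this comparison cleanly and showing the bound goes to zero uniformly in $n$; one must track the role played by $\E|K^n_T|^p$ and the Snell-envelope characterization of $\bar Y$ as an optimal-stopping value.

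Finally, the membership $Y\in\spas^p$ follows once uniform convergence of $Y^n$ (not merely of the negative parts) is in hand: combining $Y^n\uparrow Y$ pointwise with the uniform $\spas^p$-bound $\E\sup_{t\leq T}|Y^n_t|^p\leq C$, monotone convergence gives $\E\sup_{t\leq T}|Y_t|^p\leq C<\infty$, so $Y\in\spas^p$, and its continuity will be inherited in the later part of the argument (or established together with the convergence of $Z^n$). In short, the three claims are obtained by: (i) extracting $Y\geq L$ from the $L^p$-smallness of $\int_0^T(Y^n-L)^-\,ds$; (ii) dominating $(Y^n-L)^-$ by $\bar Y - Y^n$ and invoking the a priori estimates; and (iii) passing the uniform bound through Fatou/monotone convergence to place $Y$ in $\spas^p$. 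I expect step (ii), the uniform vanishing of the negative parts, to be where all the real work lies.
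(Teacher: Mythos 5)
Your steps (i) and (iii) are essentially the paper's: $Y\geq L$ is extracted from the uniform bound on $\E|K^n_T|^p$, and $Y\in\spas^p$ from the uniform $\spas^p$-bound plus monotone convergence. The genuine gap is in step (ii), which you correctly identify as the heart of the lemma but do not actually prove. The paper does not introduce any auxiliary reflected BSDE here; it uses a much more elementary device that your proposal misses: \emph{Dini's theorem}. Since $Y^n\uparrow Y$, the continuous functions $s\mapsto (L_s-Y^n_s)^+$ decrease pointwise to $(L_s-Y_s)^+\equiv 0$ on the compact interval $[0,T]$, so the convergence is uniform in $s$, $P$-a.s.; then $\sup_{s\leq T}(L_s-Y^n_s)^+\leq \sup_{s\leq T}(L_s-Y^0_s)^+\in L^p$ allows dominated convergence to conclude $\E[(\sup_{s\leq T}(Y^n_s-L_s)^-)^p]\to 0$. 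Your alternative route --- dominating $(Y^n-L)^-$ by $\bar Y^n-Y^n$ for an auxiliary RBSDE with frozen generator and then trying to show $\E\sup_s(\bar Y^n_s-Y^n_s)^p\to 0$ --- is left entirely as a sketch, and the required estimate is not a consequence of Lemma \ref{estim_apriori1} or Corollary \ref{corol_briand} as stated: controlling $\E\sup_s(\bar Y^n_s - Y^n_s)^p$ ``in terms of the vanishing penalization'' is essentially as hard as the statement you are trying to prove, so nothing has been reduced.

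A secondary gap: to pass from ``$Y_s\geq L_s$ for a.e.\ $s$'' to ``$Y_s\geq L_s$ for all $s$'' (which is also needed before Dini can be applied, since the pointwise limit $(L_s-Y_s)^+$ is a priori only upper semicontinuous and could vanish a.e.\ without vanishing everywhere), you appeal to the continuity of $Y$, which ``will turn out'' later --- but that continuity is established in the paper only \emph{after} this lemma, using the uniform convergence you are in the middle of proving, so the argument as written is circular. The paper breaks the circle by first invoking Peng's monotonic limit theorem to show that the monotone limit $Y$ is RCLL (via the weak $L^p$-convergence of the martingale and finite-variation parts of $Y^n-Y^0$), and right-continuity then suffices to upgrade the a.e.\ inequality to an everywhere inequality. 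You should either reproduce that regularity step or find another way to rule out an exceptional null set of times.
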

\begin{proof}
For any $n\geq 0$ and $t\leq T$, we have:
$$Y^n_t-Y^0_t=\int_t^T\{a^n(s)(Y^n_s-Y^0_s)+b^n(s)(Z^n_s-Z^0_s)\}ds+(K^n_T-K^n_t) -\int_t^T(Z^n_s-Z^0_s)dB_s$$
where the processes $(a^n(s))_{s\leq T}$ and $(b^n(s))_{s\leq T}$ are ${\cal P}$-measurable and uniformly bounded by the Lipschitz constant of $f$. But through Proposition
\ref{apriori_Lp_estimate}, there exists a constant $C$ such that:
$$ \forall n\geq 0, \ \E \left[ \int_0^T |a^n_s(Y^n_s-Y^0_s)+b^n_s(Z^n_s-Z^0_s)|^p ds \right]+\E \left[ \int_0^T|Z^n_s-Z^0_s|^p ds \right] \leq C.$$ 
Therefore there exist subsequences and processes $(g_t)_{t\leq T}$ and $(z_t)_{t\leq T}$ which are the weak limits in $L^p_\R([0,T]\times \Omega,dt\otimes dP,{\cal P})$ of
$(g^n_s:=(a^n_s(Y^n_s-Y^0_s)+b^n_s(Z^n_s-Z^0_s))_{s\leq T}$ and $(z^n_s:=Z^n_s-Z^0_s)_{s\leq T}$. Henceforth for any stopping time $\tau \leq T$, the following weak convergence holds :
$$\int_0^\tau z^n_sdB_s\rightarrow \int_0^\tau z_sdB_s \mbox{ and }\int_0^\tau g^n_sds\rightarrow \int_0^\tau g_sdB_s.$$ 
But
$$K^n_\tau=-(Y^n_\tau-Y^0_\tau)+(Y^n_0-Y^0_0)-\int_0^\tau g^n_sds+\int_0^\tau z^n_sdB_s$$
thus we have also the weak convergence
$$K^n_\tau\rightarrow K_\tau:=-(Y_\tau-Y^0_\tau)+(Y_0-Y^0_0)-\int_0^\tau g_sds+\int_0^\tau z_sdB_s \mbox{ and }\E(K_\tau)^p<\infty.$$ 
Now for any stopping times $\sigma\leq \tau\leq T$ it holds true that $K_\sigma\leq K_\tau$ since $K^n_\sigma\leq K^n_\tau$. It follows that the process $(K_t)_{t\leq T}$ is non-decreasing. Additionally we have $E[(K_T)^p]\leq \liminf_{n\rightarrow \infty}E[(K^n_T)^p]\leq C$ since the norm is $lsc$ with respect to the weak topology. Henceforth thanks to the monotonic limit of S.Peng (\cite{peng}, Lemma 2.2, pp.481) the processes $Y-Y^0$ and $K$ are RCLL and so is $Y$ since $Y^0$ is continuous.
\medskip

Next from $E[(K^n_T)^p]\leq C$ for any $n\geq 0$ we deduce, in taking the limit as $n\rightarrow \infty$, that:
$$\E\int_0^T(L_s-Y_s)^-ds]=0$$
and then $P$-$a.s.$, $Y_t\geq L_t$ for any $t<T$. As $\xi\geq L_T$ it follows that $Y\geq L$. Using now Dini's theorem and the Lebesgue dominated convergence one to obtain:
$$\E[(\sup_{s\leq T}(L_s-Y^n_s)^-)^p]\rightarrow 0\mbox{ as }n\rightarrow \infty.$$
Now for any $t\leq T$ we have, 
$$ Y^0_t\leq Y^n_t\leq \sup_{t\leq t}(L_t-Y^n_t)^-+(L^\xi_t)^+.$$
Taking the limit as $n\rightarrow \infty$ to get that $Y\in \spas^p$ since $Y^0$ and $L^\xi$ (see Section 3 for its definition) are so.
\end{proof}
\begin{rem}
Note that the process $Y$ is rcll and its jumps are negative since $\Delta Y=-\Delta K\leq 0$.
\end{rem}
\begin{thm}
The sequence of processes $((Y^n,Z^n,K^n))_{n\geq 0}$ converges to the $L^p$-solution $(Y,Z,K)$ of the BSDE (\ref{RBSDE}) associated with $(f(t,y,z),\xi,L)$.
\end{thm}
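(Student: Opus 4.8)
The plan is to upgrade the weak convergences already obtained into strong convergence in $\spaprod^p$, to identify the limit $(Y,Z,K)$ as an $L^p$-solution, and finally to invoke the uniqueness result of Section~2 to conclude that this limit is \emph{the} solution. The decisive step is to prove that the penalized sequence $(Y^n,Z^n)_{n\geq 0}$ is Cauchy in $\spaprod^p$; everything else follows by passing to the limit.

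First I would establish the Cauchy property. Fix $m\leq n$, so that $Y^m\leq Y^n$ by the comparison already recalled, and apply Corollary~\ref{corol_briand} to $\D Y:=Y^n-Y^m$ with the bounded-variation process $K^n-K^m$. The generator difference $f(s,Y^n_s,Z^n_s)-f(s,Y^m_s,Z^m_s)$ is handled by the Lipschitz assumption (H2)(ii) together with Young's inequality, exactly as in the proofs of Lemmas~\ref{estimate_on_Z} and~\ref{estim_apriori1}, the term in $|\D Z|^2$ being absorbed by the $c(p)$-term on the left. The only genuinely new contributions are the two penalty terms $p\int_t^T|\D Y_s|^{p-1}\sgn(\D Y_s)\,dK^n_s$ and $-p\int_t^T|\D Y_s|^{p-1}\sgn(\D Y_s)\,dK^m_s$. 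Since $dK^m_s$ is carried by $\{Y^m_s\le L_s\}$, on which $Y^n_s\geq Y^m_s$ gives $\sgn(\D Y_s)\geq 0$, the second term is nonpositive and may be discarded. The first term is carried by $\{Y^n_s\le L_s\}$, where $Y^m_s\leq Y^n_s\le L_s$ forces $0\leq \D Y_s\leq (Y^m_s-L_s)^-$; hence
$$\int_t^T|\D Y_s|^{p-1}\sgn(\D Y_s)\,dK^n_s\leq \Big(\sup_{s\leq T}(Y^m_s-L_s)^-\Big)^{p-1}K^n_T.$$
By H\"older's inequality, the uniform bound $\sup_n\E[(K^n_T)^p]\le C$ of Proposition~\ref{apriori_Lp_estimate}, and the convergence $\E[(\sup_{s\leq T}(Y^m_s-L_s)^-)^p]\to 0$ established in the previous lemma, the expectation of this quantity tends to $0$ as $m\to\infty$, uniformly in $n\geq m$. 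Combining these bounds with the Burkholder--Davis--Gundy inequality to control the martingale part, as in Lemma~\ref{estimate_on_Y}, yields $\E\sup_{t\leq T}|\D Y_t|^p+\E(\int_0^T|\D Z_s|^2ds)^{p/2}\to 0$; thus $(Y^n,Z^n)$ converges strongly in $\spaprod^p$ to a limit $(Y,Z)$, where $Y$ is the monotone limit already identified.

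Next I would pass to the limit in the penalized equation. Strong convergence and the Lipschitz property give $\int_0^\cdot f(s,Y^n_s,Z^n_s)\,ds\to\int_0^\cdot f(s,Y_s,Z_s)\,ds$ in $\spas^p$, while the stochastic integrals converge by the Burkholder--Davis--Gundy inequality; writing $K^n_t=Y^n_0-Y^n_t-\int_0^t f(s,Y^n_s,Z^n_s)\,ds+\int_0^t Z^n_s\,dB_s$ then shows that $K^n\to K$ in $\spas^p$ and that $(Y,Z,K)$ satisfies $Y_t=\xi+\int_t^T f(s,Y_s,Z_s)\,ds+K_T-K_t-\int_t^T Z_s\,dB_s$. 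Since $Y^n\to Y$ in $\spas^p$, a subsequence converges uniformly on $[0,T]$ almost surely, so the continuous processes $Y^n$ have a continuous limit $Y$; consequently $K$ is continuous, nondecreasing, with $K_0=0$ and $K_T\in L^p(\Om)$. The constraint $Y\geq L$ was already proved.

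It remains to verify the minimality (Skorokhod) condition and to conclude. From $dK^n_s=n(Y^n_s-L_s)^-\,ds$ one has $\int_0^T(Y^n_s-L_s)\,dK^n_s=-n\int_0^T((Y^n_s-L_s)^-)^2\,ds\leq 0$. Passing to the limit along the subsequence for which $Y^n\to Y$ and $K^n\to K$ uniformly --- the increasing processes $K^n$ having uniformly bounded total variation $K^n_T$ --- gives $\int_0^T(Y^n_s-L_s)\,dK^n_s\to\int_0^T(Y_s-L_s)\,dK_s$, so $\int_0^T(Y_s-L_s)\,dK_s\leq 0$; since $Y\geq L$ and $K$ is nondecreasing the reverse inequality is trivial, whence $\int_0^T(Y_s-L_s)\,dK_s=0$. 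Therefore $(Y,Z,K)$ is an $L^p$-solution of the reflected BSDE associated with $(f,\xi,L)$, and by the uniqueness theorem of Section~2 it coincides with the solution constructed in Section~3; the strong convergences $Y^n\to Y$ in $\spas^p$, $Z^n\to Z$ in $\spam^p$ and $K^n\to K$ in $\spas^p$ are precisely the asserted convergence. The main obstacle is the Cauchy estimate of the second paragraph, and within it the control of the penalty cross-term $\int|\D Y|^{p-1}\,dK^n$, which is the only place where the specific structure of the penalization and the uniform estimates of Proposition~\ref{apriori_Lp_estimate} are essential.
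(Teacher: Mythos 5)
Your argument is correct, but it follows a genuinely different route from the paper's. The paper localizes with the stopping times $\tau_k=\inf\{t:\,Y_t\geq k\ \mbox{or}\ Y^0_t\leq -k\ \mbox{or}\ |L_t|\geq k\}\wedge T$, performs an $L^2$ It\^o computation for $|Y^n-Y^m|^2$ on $[0,\tau_k]$, and controls the penalty cross term by H\"older with the \emph{conjugate} exponent $q=p/(p-1)>2$, pairing $\E[(K^n_{\tau_k})^p]^{1/p}$ with $\E[(\sup_{t\leq\tau_k}(L_t-Y^n_t)^+)^q]^{1/q}$; the localization is essential there precisely because $q>p$, so the $q$-th moment of the penalization error is only finite on $[0,\tau_k]$ where everything is bounded by $k$. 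It then extracts a weak $L^p$ limit for $Z$, argues as in El Karoui et al.\ for the uniform convergence of $Y^n$ and $K^n$ on $[0,\tau_k]$, and removes the localization using the stationarity of $(\tau_k)$. You instead run the $L^p$ It\^o formula (Corollary \ref{corol_briand}) globally on the difference $Y^n-Y^m$ and exploit the sign structure of the two penalty measures: the $dK^m$ term is nonpositive because $\D Y\geq 0$ on $\{Y^m<L\}$, and on the support of $dK^n$ one has $0\leq\D Y\leq (Y^m-L)^-$, so H\"older applied to $\bigl(\sup(Y^m-L)^-\bigr)^{p-1}K^n_T$ with exponents $p/(p-1)$ and $p$ keeps both factors at the $p$-th moment level, where Proposition \ref{apriori_Lp_estimate} and the preceding lemma apply. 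This removes the need for localization entirely and yields the strong $\spaprod^p$ convergence (the content of the paper's closing Remark) in one stroke; the identification of the limit and the Skorokhod condition then follow as in both arguments. The only point you gloss over is that passing from the bound on $\E\int|\D Y_s|^{p-2}\ind_{\D Y_s\neq 0}|\D Z_s|^2ds$ to $\E\bigl(\int_0^T|\D Z_s|^2ds\bigr)^{p/2}$ requires the companion $L^2$-It\^o-plus-BDG step of Lemma \ref{estimate_on_Z} adapted to the difference equation, with the cross term $\int\D Y_s\,d(\D K_s)$ handled by the same sign analysis; this is routine and does not affect the validity of the proof.
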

\begin{proof}
For $k\geq 0$, let us set:
$$\tau_k:=\inf\{t\geq 0,Y_t\geq k \mbox{ or }Y^0_t\leq -k \mbox{ or }|L_t|\geq k\}\wedge T.$$
The sequence of stopping times $(\tau_k)_{k\geq 0}$ is increasing,
of stationary type converging to $T$ since the process $Y$ is RCLL
and  $Y^0$, $L$ continuous. Additionally we have:$$\max\{\sup_{t\leq
\tau_k}|L_t|,\sup_{t\leq \tau_k}|Y_t|,\sup_{t\leq
\tau_k}|Y^n_t|\}\leq k$$ since $L$ and $Y^0$ are continuous, $Y$ has
only negative jumps and finally $Y^0\leq Y^n\leq Y$. Next for any
$k\geq 0$ and $n\geq 0$ we have:
\begin{eqnarray}\label{eqloc}
Y^n_{t\wedge \tau_k}=Y^n_{\tau_k}+\int_{t\wedge
\tau_k}^{\tau_k}f(s,Y^n_s,Z^n_s)ds+K^n_{\tau_k}-K^n_{t\wedge
\tau_k}-\int_{t\wedge \tau_k}^{\tau_k}Z^n_sdB_s,\,\,\forall t\leq
T.\end{eqnarray} Then for any $n,m$ and $t\leq T$, it holds true
that:
\begin{eqnarray*}
&&Y^n_{t\wedge \tau_k}-Y^m_{t\wedge
\tau_k}=(Y^n_{\tau_k}-Y^m_{\tau_k})+\int_{t\wedge
\tau_k}^{\tau_k}\{f(s,Y^n_s,Z^n_s)-f(s,Y^m_s,Z^m_s)\}ds\\&&\qquad\qquad\qquad\qquad\quad
+(K^n_{\tau_k}-K^m_{\tau_k})-(K^n_{t\wedge \tau_k}-K^m_{t\wedge
\tau_k})-\int_{t\wedge
\tau_k}^{\tau_k}(Z^n_s-Z^m_s)dB_s\\
&&=(Y^n_{\tau_k}-Y^m_{\tau_k})+\int_{t\wedge
\tau_k}^{\tau_k}\{a^{n,m}(s)(Y^n_s-Y^m_s)+b^{n,m}(s)(Z^n_s-Z^m_s)\}ds\\&&\qquad\qquad\qquad\qquad\quad
+(K^n_{\tau_k}-K^m_{\tau_k})-(K^n_{t\wedge \tau_k}-K^m_{t\wedge
\tau_k})-\int_{t\wedge \tau_k}^{\tau_k}(Z^n_s-Z^m_s)dB_s.
\end{eqnarray*}
where $a^{n,m}$ and $b^{n,m}$ are $\cal P$-measurable processes
uniformly bounded by the Lipschitz constant of $f$. Using now
It\^o's formula to obtain: \begin{eqnarray*}&&(Y^n_{t\wedge
\tau_k}-Y^m_{t\wedge \tau_k})2+\int_{t\wedge
\tau_k}^{\tau_k}|Z^n_s-Z^m_s|^2ds=(Y^n_{\tau_k}-Y^m_{\tau_k})2\\&&
\qquad\qquad +2\int_{t\wedge
\tau_k}^{\tau_k}\{a^{n,m}(s)(Y^n_s-Y^m_s)2+b^{n,m}(s)(Y^n_s-Y^m_s)(Z^n_s-Z^m_s)\}ds\\&&\qquad\qquad
+2\int_{t\wedge
\tau_k}^{\tau_k}(Y^n_s-Y^m_s)(dK^n_t-dK^m_t)-2\int_{t\wedge
\tau_k}^{\tau_k}(Y^n_s-Y^m_s)(Z^n_s-Z^m_s)dB_s.
\end{eqnarray*}
Next using localization and then taking expectation to obtain:
\begin{eqnarray*}
&&\E \int_{0}^{t\wedge\tau_k}|Z^n_s-Z^m_s|^2ds\\
&&\quad \leq \E(Y^n_{\tau_k}-Y^m_{\tau_k})2+ C\E\int_{t\wedge \tau_k}^{\tau_k}(Y^n_s-Y^m_s)^2ds + 2\E\int_{t\wedge\tau_k}^{\tau_k}(Y^n_s-Y^m_s)(dK^n_t-dK^m_t)\\
&&\quad  \leq \E(Y^n_{\tau_k}-Y^m_{\tau_k})2+C\E\int_{t\wedge
\tau_k}^{\tau_k}(Y^n_s-Y^m_s)^2ds
+2\E\{(K^n_{\tau_k})^p\}^{1/p}\E\{(\sup_{t\leq
\tau_k}(L_t-Y^n_t)^+)^q\}^{1/q}\\
&& \qquad\qquad\qquad\qquad
+2\E\{(K^m_{\tau_k})^p\}^{1/p}\E\{(\sup_{t\leq
\tau_k}(L_t-Y^m_t)^+)^q\}^{1/q}
\end{eqnarray*}
where $q$ is the conjugate of $p$. Next using dominated convergence
theorem and Proposition \ref{apriori_Lp_estimate} to deduce that:
$$\E\int_{0}^{\tau_k}|Z^n_s-Z^m_s|^2ds\rightarrow 0\mbox{ as }n,m\rightarrow \infty.$$
Now thanks to Lemma \ref{estimate_on_Z}, there exists a constant $C$
such that
$$\E\{\int_0^T|Z^n_s|^pds\}\leq C.$$
Therefore there exists a subsequence and a $\cal P$-measurable
process $Z$ which is the weak limit of that subsequence in
$L^p_\R([0,T]\times \Omega,dt\otimes dP,{\cal P})$. It follows that
for any $k\geq 0$ we have:
$$ \lim_{n\rightarrow \infty}\E \left[\int_0^{\tau_k}|Z^n_s-Z_s|^pds\right]= 0.$$
Further we can argue as in \cite{KKPPQ97} to obtain that:
$$\E \left[\sup_{t\leq T}(|Y^n_{s\wedge \tau_k}-Y^m_{s\wedge \tau_k}|^2+|K^n_{s\wedge \tau_k}-K^m_{s\wedge \tau_k}|^2)\right
]\rightarrow 0 \mbox{ as }n,m\rightarrow \infty. $$ It follows that
for any $k\geq 0$, the process $(Y_{t\wedge \tau_k})_{t\leq T}$ is
continuous and since $(\tau_k)_{k\geq 0}$ is of stationary type then
$Y$ is also a continuous process. Moreover thanks to Dini's theorem
and monotonic convergence theorem we have:
$$E[\sup_{s\leq T}|Y^n_s-Y_s|^p]\rightarrow 0 \mbox{ as }n\rightarrow \infty.$$
Finally for any $t\leq T$, let us set:
$$K_t=Y_0-Y_t-\int_0^tf(s,Y_s,Z_s)ds+\int_0^tZ_sdB_s.$$
Then the process $K$ is continuous, belongs to $\spas^p$ and for
any $k\geq 0$ we have:
$$\E \left[\sup_{t\leq T}|K^n_{s\wedge \tau_k}-K_{s\wedge \tau_k}|^p\right ]\rightarrow 0 \mbox{ as }n\rightarrow \infty.$$
As $K^n$ is increasing and the sequence $(\tau_k)_k$ is of
stationary type then $K$ is also increasing. Now going back to
(\ref{eqloc}) taking the limit as $n\rightarrow \infty$ to obtain
that:
\begin{eqnarray}\label{eqloc2}
Y_{t\wedge \tau_k}=Y_{\tau_k}+\int_{t\wedge \tau_k}^{\tau_k}f(s,Y_s,Z_s)ds+K_{\tau_k}-K_{t\wedge \tau_k}-\int_{t\wedge \tau_k}^{\tau_k}Z_sdB_s,\,\,\forall t\leq
T.\end{eqnarray} 
Additionally we can argue as in \cite {KKPPQ97} to obtain that:
$$ \int_0^{T\wedge \tau_k}(Y_s-L_s)dK_s=0.$$
Taking now the limit w.r.t. $k$ in the two previous equalities to obtain that:
\begin{eqnarray*}
Y_{t}=\xi+\int_{t}^{T}f(s,Y_s,Z_s)ds+K_{T}-K_{t}-\int_{t}^{T}Z_sdB_s,\,\,\forall t\leq T.
\end{eqnarray*}
and 
$$ \int_0^{T}(Y_s-L_s)dK_s=0.$$ 
Henceforth the process $(Y,Z,K)$ is the unique $L^p$-solution of the BSDE associated with $(f(t,y,z),L,\xi)$.
\end{proof}
\begin{rem}
We have also the following result whose proof is classical and then we omit it:
$$\lim_{n \to + \infty} \E \left[  \sup_{t \in [0,T]} |Y^n_t -  Y_t|^p + \left( \int_0^T |Z^n_t - Z_t|^2 dt \right)^{p/2} + \sup_{t \in [0,T]} |K^n_t - K_t|^p  \right] = 0.$$
\end{rem}

\section{Viscosity solutions}

Let $b : \R_+ \times \R^d \to \R^d$, $\sigma : \R_+ \times \R^d \to \R^{d \times d}$ be two globally Lipschitz functions and let us consider the following SDE:
$$dX_t = b(t,X_t)dt + \sigma(t,X_t) dB_t, t\leq T.$$
We denote by $(X^{t,x}_s)_{s \geq t}$ the unique solution of the previous SDE starting from $x$ at time $s=t$. Now we are given three continuous functions:
$$g : \R^d \to \R, \qquad f:[0,T] \times \R^d \times \R \times \R^d \to \R, \qquad h : [0,T] \times \R \to \R$$
such that $f$ is Lipschitz w.r.t. $y$ and $z$, uniformly w.r.t. $t$ and $x$. Moreover there exists $p \in ]1,2[$ s.t. for every $(t,x) \in [0,T] \times \R^d$,
\begin{equation}\label{cond_growth_inf}
\E \int_0^T \left( |f(s,X^{t,x}_s,0,0)|^p + |h(s,X^{t,x}_s)|^p \right) ds + \E |g(X_T^{t,x})|^p < + \infty.
\end{equation}
As said in the introduction, this condition relaxes the standard polynomial growth assumption and will enlarge the class of solutions. It is satisfied if for example $\sigma \sigma^*$ is uniformly elliptic, i.e., there exists $\lambda > 0$ such that 
$$\forall (t,x) \in [0,T] \times \R^d, \quad \forall \zeta \in \R^d \setminus \{ 0\}, \quad \zeta (\sigma \sigma^*)(t,x) \zeta^* \geq \lambda |\zeta|^2,$$
and if for some constant $A > 0$ (depending on $b$ and $\sigma$, and $T$) such that uniformly w.r.t. $t \in [0,T]$
\begin{equation} \label{growth_cond1}
\lim_{|x| \to + \infty} \left[ |f(t,x,0,0)|+ |g(x)| + |h(t,x)| \right] \exp( -A (\ln |x|)2) =0.
\end{equation}
The geometric Brownian motion (or Black-Scholes model in finance) is an example for which such a conditions are satisfied. Note that if we have stronger conditions on $b$ or $\sigma$, we can have weaker growth ones on $f$, $g$ and $h$. 

From now on we assume that $1<p<2$ and that for every $(t,x) \in [0,T] \times \R^d$ let us define $(Y^{t,x}_s,Z^{t,s}_x,K^{t,x}_s)_{s \in [t,T]}$ the unique solution of the reflected BSDE 
$$Y^{t,x}_s = g(X^{t,x}_T) + \int_t^T f(u,X^{t,x}_u, Y^{t,x}_u,Z^{t,x}_u) du + K^{t,x}_T - K^{t,x}_t - \int_t^T Z^{t,x}_u dB_u$$
with
$$a.s. \ \forall s \in [t,T], \ h(s,X^{t,x}_s) \leq Y^{t,x}_s.$$
Moreover on $[0,t]$, we set $Y^{t,x}_s = Y^{t,x}_t$, $Z^{t,x}_s=K^{t,x}_s=0$.

For every $(t,x)$, we will show that $Y^{t,x}_t$ is deterministic and we define a function
\begin{equation} \label{define_sol}
u(t,x) = Y^{t,x}_t.
\end{equation}
In a first part we will prove that $u$ is continuous and is a viscosity solution of the following obstacle problem 
\begin{eqnarray} \nonumber
&& \min \bigg[ u(t,x) - h(t,x), \\ \label{obs_pb} 
&& \qquad -\frac{\partial u}{\partial t}(t,x) - \mathcal{L} u(t,x) - f(t,x,u(t,x),\sigma(t,x) \nabla u(t,x)) \bigg] = 0 \\ \nonumber 
&& \hspace{10cm} (t,x) \in [0,T[ \times \R^d,\\ \nonumber 
&& u(T,x) = g(x), \quad x \in \R^d,
\end{eqnarray}
with the second order partial differential operator
$$\mathcal{L} = \frac{1}{2} \sum_{i,j=1}^d ((\sigma \sigma^*)(t,x))_{i,j} \frac{\partial2 }{\partial x_i \partial x_j} + \sum_{i=1}^d (b(t,x))_i \frac{\partial}{\partial x_i}.$$
In a second part we will prove that this is the unique continuous viscosity solution that belongs to some class of functions. However first let us recall the following definitions:
\begin{defin}
Let $u$ a function that belongs to $C([0,T]) \times \R^d$. It is said to be:

$(i)$ a viscosity subsolution of (\ref{obs_pb}) if $u(T,x) \leq g(x)$, $x \in \R^d$, and for any function $\phi \in C^{1,2}((0,T) \times \R^d)$, if $u-\phi$ has a local maximum at $(t,x)$ then 
$$\min(u(t,x)-h(t,x),-\frac{\partial \phi}{\partial t} - \mathcal{L} \phi (t,x) - f(t,x,u(t,x),\sigma \nabla \phi (t,x))) \leq 0.$$

$(ii)$  a viscosity supersolution of (\ref{obs_pb}) if $u(T,x) \geq g(x)$, $x \in \R^d$, and for any function $\phi \in C^{1,2}((0,T) \times \R^d)$, if $u-\phi$ has a local minimum at $(t,x)$ then
$$\min(u(t,x)-h(t,x),-\frac{\partial \phi}{\partial t} - \mathcal{L} \phi (t,x) - f(t,x,u(t,x),\sigma \nabla \phi (t,x))) \geq 0.$$

$(iii)$ a viscosity solution of (\ref{obs_pb}) if it is both a viscosity sub- and supersolution. 
\end{defin}

\subsection{Continuity and viscosity solution}

We have the following result:
\begin{prop}
For every $(t,x)$, $Y^{t,x}_t$ is deterministic and the function
\begin{equation*}
u(t,x) = Y^{t,x}_t
\end{equation*}
is continuous and satisfies
\begin{equation} \label{cond_infinity}
\lim_{|x| \to + \infty} |u(t,x)| \exp( -A (\ln |x|)^2) =0.
\end{equation}
\end{prop}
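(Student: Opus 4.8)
The plan is to establish the three assertions — that $Y^{t,x}_t$ is deterministic, that $u$ is continuous, and that $u$ obeys the bound \eqref{cond_infinity} — by combining the a priori estimates of Section 2 with standard moment and continuity estimates for the diffusion flow $(X^{t,x}_s)_{s\ge t}$.

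\textbf{Determinism.} First I would argue that $Y^{t,x}_t$ is a.s. constant. Since $X^{t,x}$ starts from the deterministic point $x$ at the deterministic time $t$, the randomness of the reflected BSDE on $[t,T]$ enters only through the increments $(B_s-B_t)_{t\le s\le T}$; hence the whole solution, and in particular $Y^{t,x}_t$, is measurable with respect to $\sigma\{B_s-B_t,\ t\le s\le T\}$. On the other hand, by adaptedness $Y^{t,x}_t$ is $\tri_t$-measurable. As these two $\sigma$-fields are independent, $Y^{t,x}_t$ is independent of itself and therefore a.s. equal to a constant, which legitimizes the definition \eqref{define_sol}.

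\textbf{Growth bound.} Since $u(t,x)=Y^{t,x}_t$ is deterministic, $|u(t,x)|^p\le \E\sup_{s\in[t,T]}|Y^{t,x}_s|^p$, and Lemma \ref{estimate_on_Y} bounds the right-hand side by
$$C_{p,\ka}\,\E\Big[|g(X^{t,x}_T)|^p+\Big(\int_t^T|f(s,X^{t,x}_s,0,0)|\,ds\Big)^p+\sup_{s\in[t,T]}\big(h(s,X^{t,x}_s)^+\big)^p\Big].$$
Then I would insert the growth hypothesis \eqref{growth_cond1}, which bounds $|g|$, $|f(\cdot,0,0)|$ and $|h|$ by $\exp(A(\ln|\cdot|)^2)$ times a factor vanishing at infinity, and combine it with the classical log-normal-type moment estimates for $X^{t,x}$ (depending only on $b$, $\sigma$, $T$). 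The constant $A$ is chosen precisely so that these flow moments are reabsorbed; multiplying by $\exp(-A(\ln|x|)^2)$ and letting $|x|\to+\infty$ then yields \eqref{cond_infinity}.

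\textbf{Continuity.} To compare $u(t,x)$ and $u(t',x')$ I would regard the two triples as solutions of reflected BSDEs with data $\big(f(\cdot,X^{t,x}_\cdot,\cdot,\cdot),\,g(X^{t,x}_T),\,h(\cdot,X^{t,x}_\cdot)\big)$ and its primed analogue, extended to the common interval via the convention $Y^{t,x}_s=Y^{t,x}_t$ for $s\le t$. Because $u(t,x)-u(t',x')$ is deterministic, $|u(t,x)-u(t',x')|^p\le \E\sup_s|\D Y_s|^p$, and Lemma \ref{estim_apriori1} bounds this by
$$C\,\E\Big[|\D\xi|^p+\Big(\int|\D f|\,ds\Big)^p\Big]+C\,(\Psi_T)^{1/p}\Big(\E\sup_s|\D L_s|^p\Big)^{\frac{p-1}{p}}.$$
Here $\Psi_T$ stays uniformly bounded, by the growth bound just established via Lemma \ref{estimate_on_Y}, so it remains to show that each difference of data tends to $0$ in $L^p$ as $(t',x')\to(t,x)$. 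Pointwise this follows from continuity of $g$, $f$, $h$ and of the flow $(t,x)\mapsto X^{t,x}$; the passage from a.s.\ convergence to $L^p$ convergence is the main obstacle and requires a uniform-integrability argument, again supplied by the log-normal moment estimates of $X^{t,x}$ together with \eqref{growth_cond1}. The genuinely delicate point is the time variable: for $t\ne t'$ the generator and obstacle live on different intervals, so I would control $\E\sup_s|X^{t,x}_s-X^{t',x'}_s|^q$ for a suitable $q\ge p$ by $C(|x-x'|^q+|t-t'|^{q/2})$ and treat the mismatch on $[\min(t,t'),\max(t,t')]$ separately through path continuity. Combining these gives $u(t,x)\to u(t',x')$, i.e.\ the continuity of $u$.
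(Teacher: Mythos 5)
Your proposal is correct and follows essentially the same route as the paper: continuity via Lemma \ref{estim_apriori1} applied to the differences of data $(g(X^{t,x}_T),f(\cdot,X^{t,x}_\cdot,\cdot,\cdot),h(\cdot,X^{t,x}_\cdot))$, the growth bound \eqref{cond_infinity} via Lemma \ref{estimate_on_Y} together with \eqref{growth_cond1}, and determinism from measurability with respect to the filtration of $(B_s-B_t)_{s\ge t}$ (the paper phrases this as $\tri^t_t$-measurability, you as an independence argument — the same standard fact). You in fact supply more detail than the paper on the uniform-integrability and time-mismatch issues, which the paper leaves implicit.
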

\begin{proof}
It suffices to show that whenever $(t_n,x_n) \to (t,x)$,
\begin{equation} \label{continuity1}
\E \left( \sup_{s \in [0,T]} |Y^{t_n,x_n}_s - Y^{t,x}_s|^p \right)
\to 0.
\end{equation}
Indeed, this will show that $(s,t,x) \mapsto Y^{t,x}_s$ is $L^p$
continuous, and so is $(t,x) \mapsto Y^{t,x}_t$. But $Y^{t,x}_t$ is
deterministic, since it is $\tri_t^t$ measurable. Recall that
$\{\tri_s^t, \ t \leq s \leq T\}$ denotes the natural filtration of
the Brownian motion $\{B_s - B_t , \ t \leq s \leq T\}$ argumented
with the $\Prb$ null sets of $\tri$. Now (\ref{continuity1}) is a
consequence of Lemma \ref{estim_apriori1} and the following
convergences
$$\E \left| g(X^{t,x}_T) - g(X^{t_n,x_n}_T) \right|^p \to 0,$$
$$\E \left( \sup_{s \in [0,T]} |h(s,X^{t,x}_s)-h(s,X^{t_n,x_n}_s)|^p \right) \to 0,$$
$$\E \left(\int_0^T \left| \ind_{[t,T]}(s) f(s,X^{t,x}_s,Y^{t,x}_s,Z^{t,x}_s) -
\ind_{[t_n,T]}(s) f(s,X^{t_n,x_n}_s,Y^{t,x}_s,Z^{t,x}_s)
\right|ds\right)^p \to 0,$$ which follow from the continuity
assumptions and the growth of $f$, $g$ and $h$.

The condition (\ref{cond_infinity}) follows from Lemma
\ref{estimate_on_Y} and condition (\ref{growth_cond1}).
\end{proof}

In order to prove that $u$ is a viscosity solution of the obstacle
problem (\ref{obs_pb}) we need a comparison result concerning the
solutions of reflected BSDE. For $\tilde{\xi}$, $\tilde{f}$ and
$\tilde{L}$, let us denote by $(\tilde{Y},\tilde{Z},\tilde{K})$ the
unique solution of
$$\tilde{Y}_t = \tilde{\xi} + \int_t^T \tilde{f}(s, \tilde{Y}_s,\tilde{Z}_s) ds +\tilde K_T-\tilde K_t- \int_t^T \tilde{Z}_s dB_s$$
with
$$P-a.s. \ \forall s \in [0,T], \ \tilde{L}_s \leq \tilde{Y}_s \mbox{ and }\int_0^T(\tilde Y_t-\tilde L_t)d\tilde K_t=0.$$
\begin{prop}
If a.s. $\tilde{\xi} \geq \xi$, $\tilde{f} \geq f$ and $\tilde{L}
\geq L$, then a.s. $\tilde{Y}_t \geq Y_t$ for every $t \in [0,T]$.
\end{prop}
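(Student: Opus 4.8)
The plan is to show directly that the positive part $(\D Y)^+$ of $\D Y := Y - \tilde Y$ is identically zero. Writing also $\D Z := Z - \tilde Z$, $\D K := K - \tilde K$ and subtracting the two reflected equations, $\D Y$ solves a BSDE whose driver is $\D g_s := f(s,Y_s,Z_s) - \tilde f(s,\tilde Y_s,\tilde Z_s)$ and whose finite-variation term is $\D K$. I would then apply the computation underlying Corollary \ref{corol_briand}, but to the convex $C^1$ function $x \mapsto (x^+)^p$ instead of $x \mapsto |x|^p$ (the degenerate second-order term near $0$ is controlled in exactly the same way when $p \in \,]1,2[$), obtaining for every $t$ an inequality of the form
\begin{align*}
&((\D Y_t)^+)^p + c(p)\int_t^T ((\D Y_s)^+)^{p-2}\ind_{[\D Y_s>0]}|\D Z_s|^2\,ds \le ((\D Y_T)^+)^p \\
&\quad + p\int_t^T ((\D Y_s)^+)^{p-1}\ind_{[\D Y_s>0]}\,\D g_s\,ds + p\int_t^T ((\D Y_s)^+)^{p-1}\ind_{[\D Y_s>0]}\,d(\D K_s) - \mathcal{M}_t,
\end{align*}
where $\mathcal{M}_t$ is a stochastic integral against $B$.

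Next I would check that every source term on the right has the favorable sign. Since $\tilde\xi \ge \xi$, the terminal term is $((\D Y_T)^+)^p = ((\xi-\tilde\xi)^+)^p = 0$. For the driver I split $\D g_s = \big(f(s,Y_s,Z_s)-f(s,\tilde Y_s,\tilde Z_s)\big) + \big(f(s,\tilde Y_s,\tilde Z_s) - \tilde f(s,\tilde Y_s,\tilde Z_s)\big)$; the second bracket is $\le 0$ by $f \le \tilde f$ and so contributes nonpositively, while the first is bounded by $\ka(|\D Y_s| + |\D Z_s|)$ via (H2)(ii). The crucial step is the reflection term: on $\{\D Y_s > 0\}$ one has $Y_s > \tilde Y_s \ge \tilde L_s \ge L_s$, hence $Y_s > L_s$, so the Skorokhod condition in the form $dK_s = \ind_{[Y_s=L_s]}dK_s$ forces $\ind_{[\D Y_s>0]}dK_s = 0$, whereas $-\ind_{[\D Y_s>0]}d\tilde K_s \le 0$ because $\tilde K$ is nondecreasing; therefore $\int_t^T \ind_{[\D Y_s>0]}\,d(\D K_s) \le 0$.

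Finally I would absorb the $\ka|\D Y_s|^{p-1}|\D Z_s|$ contribution into the quadratic term through the same Young inequality as in Lemma \ref{estimate_on_Y}, so that only $p\big(\ka + \tfrac{\ka^2}{p-1}\big)\int_t^T ((\D Y_s)^+)^p\,ds$ survives on the right. After localizing by the stopping times introduced in Lemma \ref{estimate_on_Z} and invoking the Burkholder--Davis--Gundy inequality to see that $\mathcal{M}$ is a genuine martingale, taking expectations yields $\E((\D Y_t)^+)^p \le p\big(\ka+\tfrac{\ka^2}{p-1}\big)\int_t^T \E((\D Y_s)^+)^p\,ds$, whence Gronwall's lemma gives $\E((\D Y_t)^+)^p = 0$ for every $t$. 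This means $Y_t \le \tilde Y_t$ a.s. for all $t$, which is the assertion.

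The main obstacle I anticipate is the rigorous justification of the Tanaka-type formula for $(x^+)^p$ and of the martingale property of $\mathcal{M}$ in the merely $L^p$-integrable framework: both need the localization-plus-Fatou argument already developed in Lemmas \ref{estimate_on_Z}--\ref{estim_apriori1} rather than a direct $L^2$ computation. By contrast the sign analysis of the reflection term, although it is the conceptual heart of the proof, becomes routine once the Skorokhod condition and the ordering of the barriers are combined as above.
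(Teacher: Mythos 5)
Your argument is correct, but it is not the route the paper takes. The paper's proof is a two-line reduction: it approximates both reflected solutions by the penalized BSDEs of Section 4, notes that $\tilde f\ge f$ and $\tilde L\ge L$ imply the penalized generator $\tilde f(s,y,z)+n(y-\tilde L_s)^-$ dominates $f(s,y,z)+n(y-L_s)^-$, applies the standard (non-reflected) comparison theorem to get $\tilde Y^n\ge Y^n$ for every $n$, and passes to the monotone limit. You instead prove comparison directly on the reflected equations by running the $L^p$ It\^o--Tanaka machinery of Corollary \ref{corol_briand} on $((Y-\tilde Y)^+)^p$; your sign analysis of each source term is sound, and in particular the key observation that $\ind_{[Y_s>\tilde Y_s]}dK_s=0$ (because $Y_s>\tilde Y_s\ge\tilde L_s\ge L_s$ forces $Y_s>L_s$, off the support of $dK$) while $-\ind_{[Y_s>\tilde Y_s]}d\tilde K_s\le 0$ is exactly the right way to kill the reflection term. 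The trade-off: the paper's proof is essentially free once the penalization construction exists, but it silently relies on an $L^p$ comparison theorem for unreflected BSDEs (only cited, not proved) and on the identification of the penalized limits with the given solutions via uniqueness; your proof is self-contained at the level of the two solutions themselves, is insensitive to how they were constructed, and yields a quantitative Gronwall estimate as a by-product, at the price of justifying the Tanaka-type inequality for $x\mapsto(x^+)^p$, $p\in\,]1,2[$ --- which, as you correctly flag, goes through by the same smoothing-localization-Fatou scheme as for $|x|^p$, together with the BDG/H\"older argument already used in Lemma \ref{estim_apriori1} to see that the stochastic integral is a true martingale.
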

\begin{proof}
Let $(\tilde{Y}^n,\tilde{Z}^n)$ and $(Y^n,Z^n)$ be the sequence
constructed by penalization (see Section \ref{section_penaliz}).
Using the standard comparison result for BSDE (see for example
\cite{pardoux}), then for every $n \in \N$, $\tilde{Y}^n_t \geq
Y^n_t$. Since the sequences $\tilde{Y}^n$ and $Y^n$ converge to
resp. $\tilde{Y}$ and $Y$, the conclusion follows.
\end{proof}

\begin{thm} The function $u:(t,x)\mapsto u(t,x)=Y^{t,x}_t$ defined
in (\ref{define_sol}) is a viscosity solution of the obstacle
problem (\ref{obs_pb}).
\end{thm}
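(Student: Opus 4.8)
The plan is to combine the penalization scheme of Section \ref{section_penaliz} with the stability of viscosity solutions under (locally uniform) limits, following the classical route of the nonlinear Feynman--Kac formula. For each $n\geq 0$ and each $(t,x)$, let $(Y^{n,t,x},Z^{n,t,x})$ be the penalized solution
$$Y^{n,t,x}_s = g(X^{t,x}_T) + \int_s^T f(r,X^{t,x}_r,Y^{n,t,x}_r,Z^{n,t,x}_r)\,dr + n\int_s^T (Y^{n,t,x}_r-h(r,X^{t,x}_r))^-\,dr - \int_s^T Z^{n,t,x}_r\,dB_r,$$
and set $u_n(t,x)=Y^{n,t,x}_t$. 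Arguing exactly as for the continuity of $u$ (with Lemma \ref{estim_apriori1} now applied to the standard, non-reflected BSDE), each $u_n$ is deterministic and continuous; the Markov/flow property together with the standard Feynman--Kac representation (see e.g.\ \cite{pardoux}) then shows that $u_n$ is a viscosity solution of the penalized semilinear equation
\begin{equation}\label{pen_pde}
-\frac{\partial u_n}{\partial t} - \mathcal{L}u_n - f(t,x,u_n,\sigma\nabla u_n) - n(u_n-h)^- = 0, \quad u_n(T,\cdot)=g.
\end{equation}

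First I would record two facts. Since $Y^{n,t,x}_t\nearrow Y^{t,x}_t$ (monotonicity of the penalization, cf.\ Section \ref{section_penaliz}), we have $u_n\nearrow u$ pointwise; as $u_n$ and $u$ are continuous, Dini's theorem upgrades this to convergence that is uniform on compact sets. Secondly, because $Y^{t,x}\geq h(\cdot,X^{t,x})$ the limit satisfies $u\geq h$, so the obstacle term $u-h$ is always nonnegative, and the terminal condition $u(T,x)=g(x)$ holds by construction.

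For the supersolution property, observe that $n(u_n-h)^-\geq 0$, so (\ref{pen_pde}) shows that each $u_n$ is in particular a viscosity supersolution of $-\partial_t v-\mathcal{L}v-f(t,x,v,\sigma\nabla v)=0$. Stability of supersolutions under locally uniform limits transfers this to $u$; together with $u-h\geq 0$ this yields, whenever $u-\phi$ has a local minimum at $(t_0,x_0)$, the inequality $\min\big(u-h,\,-\partial_t\phi-\mathcal{L}\phi-f(t_0,x_0,u(t_0,x_0),\sigma\nabla\phi)\big)\geq 0$. For the subsolution property, let $u-\phi$ have a strict local maximum at $(t_0,x_0)$. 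If $u(t_0,x_0)=h(t_0,x_0)$ the minimum is $\leq 0$ and nothing is to prove, so assume $u(t_0,x_0)>h(t_0,x_0)$. By locally uniform convergence there are points $(t_n,x_n)\to(t_0,x_0)$ at which $u_n-\phi$ attains a local maximum with $u_n(t_n,x_n)\to u(t_0,x_0)$; since the limit strictly exceeds $h$ and $h$ is continuous, $(u_n(t_n,x_n)-h(t_n,x_n))^-=0$ for all large $n$, so the penalization term drops out of the subsolution inequality for (\ref{pen_pde}) and, letting $n\to\infty$, one gets $-\partial_t\phi-\mathcal{L}\phi-f(t_0,x_0,u(t_0,x_0),\sigma\nabla\phi)\leq 0$.

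The main obstacle is justifying that each penalized value function $u_n$ is genuinely a viscosity solution of (\ref{pen_pde}) inside the present $L^p$ framework, where square integrability is unavailable: one must check that the flow property and the Feynman--Kac representation survive under the growth condition (\ref{cond_growth_inf}), and that the continuity estimates used for $u$ apply uniformly enough to the approximating equations so that Dini's theorem is legitimately invoked. The delicate feature of the limit passage is that the penalization coefficient $n$ blows up; this is circumvented by its favourable sign in the supersolution part, and by the observation that $n(u_n-h)^-$ vanishes near any point where $u$ strictly dominates the obstacle in the subsolution part.
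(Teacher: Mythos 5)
Your proposal is correct and follows essentially the same route as the paper: penalization, Dini's theorem to upgrade the monotone convergence $u_n\uparrow u$ to locally uniform convergence, and the standard stability argument (the paper invokes Lemma 6.1 of \cite{crandall} for the perturbed maximum points you construct by hand), with the penalty term eliminated by its sign in the supersolution part and by $u>h$ near the test point in the subsolution part. The one point you flag as delicate --- that each $u_n$ solves the penalized PDE in the $L^p$ setting --- is exactly the point the paper also only addresses by asserting that the proof of Theorem 3.2 in \cite{pardoux} carries over with straightforward modifications for $1<p<2$.
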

\begin{proof}
We are going to use the approximation of the RBSDE (\ref{RBSDE}) by
penalization, which was studied in Section 4. For each $(t,x) \in
[0,T] \times \R^d$, let $(Y^{t,x,n}, Z^{t,x,n})$ denote the solution
of the BSDE $$\begin{array}{l}\forall s \in [t,T], \ Y^{t,x,n}_s =
g(X^{t,x}_T)+ \int_s^T f(u,Y^{t,x,n}_u,Z^{t,x,n}_u) du
\\\qquad\qquad\qquad\qquad\qquad\qquad\qquad+ n \int_s^T (Y^{t,x,n}_u - h(u,X^{t,x}_u))^- du -
\int_s^T Z^{t,x,n}_u dB_u.\end{array}$$ From \cite{pardoux} it is
known that
$$u_n(t,x) = Y^{t,x,n}_t, \quad (t,x) \in [0,T]\times \R^d,$$
is the viscosity solution of the parabolic PDE: for every $(t,x) \in
[0,T[ \times \R^d$
$$\frac{\partial u_n}{\partial t} (t,x) + \mathcal{L} u_n(t,x) + f_n(t,x,u_n(t,x),\sigma(t,x) \nabla u_n(t,x)) = 0$$
with
$$f_n(t,x,y,z) = f(t,x,y,z) + n(y-h(t,x))^-$$
and $u_n(T,x) = g(x)$ for each $x \in \R^d$. To be more precise, we have to say that in \cite{pardoux} $p$ is supposed to be egal to 2. But with straightforward modifications in the proof of Theorem 3.2 in \cite{pardoux}, we can show that  the result holds also for $1 < p < 2$.

However, from the results of the previous section, for each $(t,x) \in [0,T] \times \R^d$,
$$u_n(t,x) \uparrow u(t,x), \quad \mbox{as } n \to + \infty.$$
Since $u_n$ and $u$ are continuous, it follows from Dini's theorem that the above convergence is uniform on compacts.

We now show that $u$ is a subsolution of (\ref{obs_pb}). Let $(t,x)$ be a point at which $u(t,x) > h(t,x)$, and let $\phi$ be a $C^{1,2}$ function such that $u-\phi$ has a maximum at point $(t,x)$. From Lemma 6.1 in \cite{crandall}, there exists sequences $n_j \to + \infty$, $(t_j,x_j) \to (t,x)$ such that
$$\frac{\partial \phi}{\partial t} (t_j,x_j) + \mathcal{L} \phi(t_j,x_j) + f_{n_j}(t_j,x_j,u_{n_j}(t_j,x_j),\sigma(t_j,x_j) \nabla \phi(t_j,x_j)) \leq 0.$$
From the assumption that $u(t,x) > h(t,x)$ and the uniform convergence of $u_n$, it follows that for $j$ large enough $u_{n_j}(t_j,x_j) > h(t_j,x_j)$. Therefore in taking the limit as $j\to + \infty$, the above inequality yields:
$$\frac{\partial \phi}{\partial t} (t,x) + \mathcal{L} \phi(t,x) + f(t,x,u(t,x),\sigma(t,x) \nabla \phi(t,x)) \leq 0.$$
and we have proved that $u$ is a subsolution of (\ref{obs_pb}).

The same arguments (with converse inequalities) prove that $u$ is also a supersolution of (\ref{obs_pb}). 
\end{proof}

\subsection{Uniqueness of the solution}

In order to establish the uniqueness of the solution of equation
result (\ref{define_sol}), we need to impose the following
additional assumption on $f$. For each $R > 0$, there exists a
continuous function $m_R : \R_+ \to \R_+$ such that $m_R(0) = 0$ and
\begin{equation} \label{lip_cond_wrt_x}
|f(t,x,r,p) - f(t,y,r,p)| \leq m_R(|x-y|(1+|p|)),
\end{equation}
for all $t \in [0,T]$, $|x| \leq R$, $|y| \leq R$, $|r| \leq R$, and
$p \in \R^d$.

\begin{thm} \label{thm_uniq_visc_sol}
Under the above assumptions and (\ref{lip_cond_wrt_x}), the obstacle
problem (\ref{obs_pb}) has at most one solution satisfying
(\ref{cond_infinity}).
\end{thm}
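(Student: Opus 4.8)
The statement is a uniqueness result that I would obtain through a comparison principle, applied twice. Concretely, the plan is to prove: if $u$ is a continuous viscosity subsolution and $v$ a continuous viscosity supersolution of (\ref{obs_pb}), both satisfying the growth bound (\ref{cond_infinity}), then $u\le v$ on $[0,T]\times\R^d$. Given two solutions $u_1,u_2$ of (\ref{obs_pb}) in the admissible class, each is simultaneously a sub- and a supersolution, so the principle yields $u_1\le u_2$ and $u_2\le u_1$, hence $u_1=u_2$. The scheme is the doubling-of-variables method (the user's guide \cite{crandall}), adapted to the obstacle (the ``$\min$'') structure and, above all, to the nonstandard growth $\exp(A(\ln|x|)^2)$.

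First I would record how the obstacle enters the viscosity inequalities. A supersolution always satisfies $v\ge h$, since $\min(v-h,\cdots)\ge 0$ forces both entries to be nonnegative; and at an interior maximum of a test difference, a subsolution at a point where $u>h$ must satisfy the genuine PDE inequality $-\phi_t-\mathcal{L}\phi-f\le 0$. Arguing by contradiction from $\sup(u-v)>0$, at the doubling maximum one has $u\ge v\ge h$, hence $u>h$ there, so the PDE inequalities for $u$ (from below) and $v$ (from above) are both available and the obstacle plays no further role beyond guaranteeing this.

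The core difficulty, where most of the work lies, is the unbounded domain together with $|u|,|v|=o(\exp(A(\ln|x|)^2))$. To force the doubled functional to attain its supremum and to absorb the error terms, I would build an auxiliary weight $\Theta(t,x)=\exp\!\big(a(t)\,\ell(x)^2\big)$ with $\ell(x)=\ln(|x|^2+e)$, where $a(\cdot)>0$ solves a Riccati-type inequality $a'(t)\le -C a(t)^2$ on $[0,T]$, the constant $C$ being fixed by the Lipschitz bounds on $b,\sigma$ and $f$. Since $b,\sigma$ grow at most linearly, $\langle\sigma\sigma^*\nabla\ell,\nabla\ell\rangle$ and $\operatorname{tr}(\sigma\sigma^* D^2\ell)$ stay bounded, whence to leading order $\mathcal{L}\Theta\le C\,a(t)^2\,\Theta\,\ell^2$ while $-\partial_t\Theta=-a'(t)\,\Theta\,\ell^2$; the Riccati choice makes the dangerous $\ell^2\sim(\ln|x|)^2$ contribution to $-\partial_t\Theta-\mathcal{L}\Theta$ nonnegative, and the remaining $\kappa$-Lipschitz terms $\kappa\Theta+\kappa|\sigma^*\nabla\Theta|$ are of lower order in $\ell$, so $\Theta$ is a strict supersolution of the linearized operator. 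The same computation explains why $A$ must be small relative to $T$ and the coefficient bounds (the ``$A$ depending on $b,\sigma,T$''): coercivity forces $a(t)\ell^2$ to dominate $A(\ln|x|)^2$ uniformly, i.e. $\min_t a(t)=a(T)>A/4$, while the Riccati blow-up caps $a(T)$ below a threshold of order $1/(CT)$; the two are compatible only when $A$ lies below an explicit bound. Coercivity then gives $u(t,x)-v(t,y)-\delta(\Theta(t,x)+\Theta(t,y))\to-\infty$ as $|x|+|y|\to\infty$, so the penalized suprema are reached.

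With $\Theta$ available the rest is routine bookkeeping. I would study
\[
\Phi_{\eps,\delta}(t,x,y)=u(t,x)-v(t,y)-\frac{|x-y|^2}{2\eps}-\delta\big(\Theta(t,x)+\Theta(t,y)\big),
\]
locate an interior maximum $(\bar t,\bar x,\bar y)$, apply the parabolic Ishii lemma of \cite{crandall} to produce the jets and the ordered matrices, and insert them into the sub- and supersolution inequalities. The $(y,z)$-Lipschitz property of $f$ controls the zeroth- and first-order differences; the strict-supersolution property of $\delta\Theta$ absorbs the operator terms it generates; and the local modulus (\ref{lip_cond_wrt_x}), applied with $|p|\sim|\bar x-\bar y|/\eps$, turns the $x$-dependence of $f$ into $m_R\!\big(|\bar x-\bar y|^2/\eps\big)$, which vanishes because $|\bar x-\bar y|^2/\eps\to 0$. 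Letting first $\eps\to 0$ and then $\delta\to 0$ contradicts $\sup(u-v)>0$. The main obstacle is precisely the construction and verification of $\Theta$: arranging the exponent $a(t)\ell(x)^2$ to be at once coercive against the admissible growth and a supersolution of the operator, which is possible only thanks to the calibrated smallness of $A$; once this is secured, the Ishii-lemma estimates and the obstacle case analysis are standard.
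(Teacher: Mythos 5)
Your overall strategy is sound and would prove the theorem, but it is organized quite differently from the paper's argument, which follows \cite{BBP97}. You run a single doubling-of-variables comparison between $u$ and $v$, building the growth-compensating weight $\delta\Theta$ directly into the penalized functional $\Phi_{\eps,\delta}$. The paper instead splits the work in two: it first proves (by doubling, Ishii's lemma, and the same obstacle case analysis you describe -- at the maximum $u>h$ because $w(t_0,x_0)>0$ and $v\geq h$) that $w=u-v$ is a viscosity \emph{subsolution} of the linearized equation (\ref{eq_diff_sub_super}), $\min(w,-w_t-\mathcal{L}w-\ka\|w\|-\ka\|\sigma\nabla w\|)=0$; it then constructs an explicit smooth strict supersolution $\chi(t,x)=\exp[(C(T-t)+A)\psi(x)]$ of that equation and concludes $|w|\leq\al\chi$ by an elementary maximum argument (no second doubling, since $\chi$ is smooth), letting $\al\to 0$. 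Your one-pass version buys a shorter exposition; the paper's two-step version isolates the obstacle/Ishii work in a lemma and reduces the growth issue to a clean ODE-type computation on $\chi$. Your treatment of the modulus (\ref{lip_cond_wrt_x}) with $|p|\sim|\bar x-\bar y|/\eps$ and of the Lipschitz terms in $(y,z)$ coincides with the paper's estimates.

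One point needs attention. The paper's weight has a \emph{linear}-in-time exponent coefficient $a(t)=C(T-t)+A$ and is a strict supersolution only on a short window $[t_1,T]$ with $t_1=T-A/C$; the full interval $[0,T]$ is covered by repeating the argument on $[t_2,t_1]$, $[t_3,t_2]$, etc. This is what allows the theorem to hold for the \emph{given} constant $A$ of (\ref{growth_cond1})--(\ref{cond_infinity}), whatever its size. Your Riccati choice $a'\leq -Ca^2$ forces $a(T)<1/(CT)$ while coercivity against the class (\ref{cond_infinity}) requires $a(T)>A/4$; as you yourself note, these are compatible only for $A$ below a threshold depending on $T$. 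As written, your proof therefore establishes uniqueness only in a possibly smaller class (or on a shorter horizon) than claimed. The fix is exactly the paper's backward-in-time iteration: restart the weight on successive subintervals of fixed length, using the uniqueness already obtained at the right endpoint as the new terminal identification. With that addition your argument is complete.
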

The proof is similar to the uniqueness proof given in \cite{BBP97}.
We just have to take into account the obstacle $h$.

Let $u$ and $v$ be two viscosity solutions of (\ref{obs_pb}). As in
\cite{BBP97}, the proof consists in two steps. We first show that
$u-v$ and $v-u$ are subsolutions of a specific partial differential
equation, then we build a suitable sequence of smooth supersolutions
of this equation to show that $|u-v| = 0$ in $[0,T] \times \R^d$.
Hereafter we denote by $\|.\|$ the sup norm in $\R^d$.
\begin{lem}
Let $u$ be a subsolution and $v$ a supersolution of (\ref{obs_pb}).
Then the function $w = u-v$ is a viscosity subsolution of
\begin{equation} \label{eq_diff_sub_super}
\min(w,-\frac{\partial w}{\partial t} - \mathcal{L} w - \kappa \|w\|
- \kappa \|\sigma \nabla w\|)=0,
\end{equation}
where $\kappa$ is the Lipschitz constant of $f$ in $(y,z)$.
\end{lem}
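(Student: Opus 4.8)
The plan is to verify the definition of viscosity subsolution for $w=u-v$ directly. I would fix a test function $\phi\in C^{1,2}((0,T)\times\R^d)$ and a point $(t_0,x_0)$ at which $w-\phi=u-v-\phi$ attains a local maximum; after adding a quadratic penalization to $\phi$ one may assume the maximum is strict. The goal is to show
$$\min\Big(w(t_0,x_0),\ -\frac{\partial \phi}{\partial t}(t_0,x_0)-\mathcal{L}\phi(t_0,x_0)-\kappa\|w(t_0,x_0)\|-\kappa\|\sigma(t_0,x_0)\nabla\phi(t_0,x_0)\|\Big)\leq 0.$$
If $w(t_0,x_0)\leq 0$ the first argument of the minimum is nonpositive and there is nothing to prove, so I would henceforth assume $w(t_0,x_0)>0$, i.e. $u(t_0,x_0)>v(t_0,x_0)$. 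The decisive use of the obstacle is that, $v$ being a supersolution, one has $v\geq h$ everywhere; hence $u(t_0,x_0)>v(t_0,x_0)\geq h(t_0,x_0)$, so the obstacle constraint is inactive for $u$ at $(t_0,x_0)$ and, by continuity of $u$ and $h$, at all nearby points. Consequently the subsolution property of $u$ collapses to the differential inequality for the PDE part alone near $(t_0,x_0)$, and our task reduces to establishing the second argument of the minimum.

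The core step is to couple the PDE sub-inequality for $u$ with the PDE super-inequality for $v$ through the doubling-of-variables device. I would introduce, for $\varepsilon>0$,
$$\Psi_\varepsilon(t,x,y)=u(t,x)-v(t,y)-\frac{|x-y|^2}{2\varepsilon}-\phi\Big(t,\frac{x+y}{2}\Big),$$
let $(t_\varepsilon,x_\varepsilon,y_\varepsilon)$ be a maximizer, and invoke the standard estimates giving $(t_\varepsilon,x_\varepsilon,y_\varepsilon)\to(t_0,x_0,x_0)$ and $|x_\varepsilon-y_\varepsilon|^2/\varepsilon\to 0$ as $\varepsilon\to 0$. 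The parabolic theorem on sums (the Crandall--Ishii lemma, see \cite{crandall}) then produces a common time-slope and second-order jets $(a,\ p_\varepsilon+\tfrac12\nabla\phi,\ X)\in\bar{\mathcal P}^{2,+}u(t_\varepsilon,x_\varepsilon)$ and $(a,\ p_\varepsilon-\tfrac12\nabla\phi,\ Y)\in\bar{\mathcal P}^{2,-}v(t_\varepsilon,y_\varepsilon)$, with $p_\varepsilon=(x_\varepsilon-y_\varepsilon)/\varepsilon$ and matrices satisfying the usual inequality $X\leq Y$ up to the Hessian of $\phi$. Since $u(t_\varepsilon,x_\varepsilon)>h(t_\varepsilon,x_\varepsilon)$ for $\varepsilon$ small, both the sub- and super-inequalities hold in their pure PDE form, and I would subtract them.

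In the difference, the second-order contribution is controlled with the correct sign by $X\leq Y$ together with the ellipticity structure of $\mathcal{L}$, while the lower-order terms reassemble into $-\partial_t\phi-\mathcal{L}\phi$ at the limit point. The difference of the nonlinear terms is split into a part stemming from the $x$-variation of $f$, which the assumption (\ref{lip_cond_wrt_x}) bounds by $m_R\big(|x_\varepsilon-y_\varepsilon|(1+|p_\varepsilon|)\big)$ and which tends to $0$ because $|x_\varepsilon-y_\varepsilon|\,|p_\varepsilon|=|x_\varepsilon-y_\varepsilon|^2/\varepsilon\to 0$, and a part stemming from the variation of the $(y,z)$-arguments, which the Lipschitz assumption (H2)(ii) bounds by $\kappa\|w\|+\kappa\|\sigma\nabla\phi\|$ up to a further term controlled by the Lipschitz continuity of $\sigma$ and the same penalization estimate. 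Letting $\varepsilon\to 0$ and using continuity of $u$, $v$, $\sigma$ and $\phi$ yields the desired inequality at $(t_0,x_0)$, which is exactly the second argument of the minimum in (\ref{eq_diff_sub_super}). I expect the principal obstacle to be the bookkeeping in the theorem on sums---tracking the Hessian correction from $\phi$ and the common time derivative in the parabolic setting---and, above all, the verification that the $x$-regularity term $m_R\big(|x_\varepsilon-y_\varepsilon|(1+|p_\varepsilon|)\big)$ vanishes, since this is precisely the point at which the extra structural assumption (\ref{lip_cond_wrt_x}) on $f$ is indispensable.
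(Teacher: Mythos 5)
Your proposal is correct and follows essentially the same route as the paper: doubling of variables, the Crandall--Ishii theorem on sums, reduction to the pure PDE inequalities because the obstacle is inactive, the structural assumption (\ref{lip_cond_wrt_x}) to absorb the $x$-regularity term via $|x_\varepsilon-y_\varepsilon|^2/\varepsilon\to0$, and the Lipschitz property of $f$ in $(y,z)$ to produce the $\kappa\|w\|+\kappa\|\sigma\nabla w\|$ terms. The only (harmless) deviations are that you double space alone rather than space and time, and that you discharge the obstacle by invoking $v\geq h$ everywhere (a standard consequence of the supersolution property plus continuity), whereas the paper reaches the same conclusion by a contradiction argument applied to the supersolution inequality at the doubled point $(\bar s,\bar y)$.
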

\begin{proof}
First remark that $(u-v)(T,x) \leq 0$. Next let $\phi \in
C^{1,2}((0,T)\times \R^d)$ and let $(t_0,x_0) \in (0,T) \times \R^d$
be a strict global maximum point of $w-\phi$ and we suppose that
$w(t_0,x_0) > 0$. Our aim is to prove that at $(t_0,x_0)$
$$-\frac{\partial \phi}{\partial t} - \mathcal{L} \phi - \kappa |w| - \kappa |\sigma \nabla \phi| \leq 0.$$
We sketch the proof of Lemma 3.7 in \cite{BBP97}. We introduce the
function
$$\psi_{\eps,\al}(t,x,s,y)= u(t,x) - v(s,y) - \frac{|x-y|^2}{\eps^2} - \frac{|t-s|^2}{\al^2} - \phi(t,x),$$
where $\eps$ and $\al$ are positive parameters which are devoted to
tend to zero. Since $(t_0,x_0)$ is a strict global maximum point of
$u-v-\phi$, there exists a sequence
$(\bar{t},\bar{x},\bar{s},\bar{y})$ such that
\begin{itemize}
    \item $(\bar{t},\bar{x},\bar{s},\bar{y})$ is a global maximum point of $\psi_{\eps,\al}$ in $([0,T] \times \bar{B_R})2$ where $B_R$ is a ball with a large radius $R$;
    \item $(\bar{t},\bar{x})$, $(\bar{s},\bar{y}) \to (t_0,x_0)$ as $(\eps,\al) \to 0$;
    \item $\frac{|\bar{x}-\bar{y}|^2}{\eps^2}$ and $\frac{|\bar{t}-\bar{s}|^2}{\al^2}$ are bounded and tend to zero when $(\eps,\al) \to 0$.
\end{itemize}
Moreover there exists two symmetric matrices $X$ and $Y$ such that
since $u$ is a subsolution, at point $(\bar{t},\bar{x})$
\begin{equation} \label{eq_u_subsol}
\min(u-h,\bar{a}-\frac{\partial \phi}{\partial t} - \frac{1}{2}
\mbox{Tr } (\sigma \sigma^* X) - \langle b, (\bar{p} + \nabla \phi)
\rangle - f(\bar{t},\bar{x},u,\sigma (\bar{p} + \nabla \phi))) \leq
0,
\end{equation}
and since $v$ is a supersolution, at point $(\bar{s},\bar{y})$
\begin{equation} \label{eq_v_supersol}
\min(v-h,\bar{a} - \frac{1}{2} \mbox{Tr } (\sigma \sigma^* Y) -
\langle b, \bar{p} \rangle - f(\bar{s},\bar{y},v,\sigma \bar{p} ))
\geq 0,
\end{equation}
where $\bar{a} = \frac{2(\bar{t}-\bar{s})}{\al^2}$ and $\bar{p} =
\frac{2(\bar{x}-\bar{y})}{\eps^2}$. Now we want to substract these
inequalities. Then from the Lipschitz continuity of $\sigma$ and $b$
we obtain:
$$\mbox{Tr } (\sigma \sigma^*(\bar{t},\bar{x}) X) - \mbox{Tr}
(\sigma \sigma^*(\bar{s},\bar{y}) Y) \leq C
\frac{|\bar{x}-\bar{y}|^2 + |\bar{t}-\bar{s}|^2}{\eps^2} + \mbox{Tr
} (\sigma \sigma^*(\bar{t},\bar{x}) D^2 \phi(\bar{t},\bar{x}));$$
and
$$|\langle b(\bar{t},\bar{x}), \bar{p} \rangle - \langle b(\bar{s},\bar{y}), \bar{p} \rangle| \leq
C \frac{|\bar{x}-\bar{y}|^2 + |\bar{t}-\bar{s}|^2}{\eps^2}.$$ We now
consider the difference between the nonlinear terms:
\begin{eqnarray*}
&& f(\bar{t},\bar{x},u(\bar{t},\bar{x}),\sigma(\bar{t},\bar{x}) (\bar{p} + \nabla \phi(\bar{t},\bar{x}))) - f(\bar{s},\bar{y},v(\bar{s},\bar{y}),\sigma(\bar{s},\bar{y}) \bar{p} ) \\
&& \quad \leq \rho_{\eps} (|\bar{t}-\bar{s}|) + m(|\bar{x}-\bar{y}|(1+|\bar{p} \sigma(\bar{s},\bar{y})|)) + \kappa |u(\bar{t},\bar{x})-v(\bar{s},\bar{y})| \\
&& \qquad \qquad \qquad \qquad \qquad \qquad + \kappa
|\bar{p}(\sigma(\bar{t},\bar{x}) - \sigma(\bar{s},\bar{y})) +
\sigma(\bar{t},\bar{x}) \nabla \phi(\bar{t},\bar{x})|.
\end{eqnarray*}
Note that $$|\bar{p}(\sigma(\bar{t},\bar{x}) -
\sigma(\bar{s},\bar{y}))| \leq C \frac{|\bar{x}-\bar{y}|^2 +
|\bar{t}-\bar{s}|^2}{\eps^2}$$ and
$$|\bar{x}-\bar{y}||\bar{p} \sigma(\bar{s},\bar{y})| \leq C \frac{|\bar{x}-\bar{y}|^2}{\eps^2}.$$

Now let us go back to (\ref{eq_u_subsol}) and (\ref{eq_v_supersol}).
We claim that $u(\bar{t},\bar{x}) - h(\bar{t},\bar{x}) > 0$ (taking
a subsequence if necessary). If not, there exists a subsequence such
that $u(\bar{t},\bar{x}) - h(\bar{t},\bar{x}) \leq 0$. Passing to
the limit we get $u(t_0,x_0) - h(t_0, x_0) \leq 0$. But from the
assumption  $u(t_0,x_0) - v(t_0, x_0) > 0$, we deduce that $0 \geq
u(t_0,x_0) - h(t_0, x_0) > v(t_0,x_0) - h(t_0, x_0)$. Therefore we
have $v(\bar{s},\bar{y}) - h(\bar{s},\bar{y}) < 0$, which leads to a
contradiction with (\ref{eq_v_supersol}). Henceforth
(\ref{eq_u_subsol}) becomes
$$\bar{a}-\frac{\partial \phi}{\partial t} - \frac{1}{2} \mbox{Tr } (\sigma \sigma^* X) - \langle b, (\bar{p} + \nabla \phi) \rangle - f(\bar{t},\bar{x},u,\sigma (\bar{p} + \nabla \phi)) \leq 0.$$
Thus we obtain
\begin{eqnarray*}
&& -\frac{\partial \phi}{\partial t} - \frac{1}{2} \mbox{Tr } (\sigma \sigma^* X) - \langle b, (\bar{p} + \nabla \phi) \rangle - f(\bar{t},\bar{x},u,\sigma (\bar{p} + \nabla \phi)) \\
&& \qquad + \frac{1}{2} \mbox{Tr } (\sigma \sigma^* Y) + \langle b,
(\bar{p} + \nabla \phi) \rangle + f(\bar{s},\bar{y},v,\sigma \bar{p}
)) \leq 0.
\end{eqnarray*}
With all previous estimates we get:
\begin{eqnarray*}
&& -\frac{\partial \phi}{\partial t} - \frac{1}{2} \mbox{Tr } (\sigma \sigma^* X) - \langle b, \nabla \phi \rangle - \kappa |u(\bar{t},\bar{x})-v(\bar{s},\bar{y})| - \kappa |\sigma(\bar{t},\bar{x}) \nabla \phi(\bar{t},\bar{x})| \\
&& \quad \leq \rho_{\eps} (|\bar{t}-\bar{s}|) + C
\frac{|\bar{x}-\bar{y}|^2 + |\bar{t}-\bar{s}|^2}{\eps^2} +
m(|\bar{x}-\bar{y}|(1+|\bar{p} \sigma(\bar{s},\bar{y})|)).
\end{eqnarray*}
Next let first $\al$ goes to zero. Since
$\frac{|\bar{t}-\bar{s}|^2}{\al^2}$ is bounded then
$|\bar{t}-\bar{s}|$ goes to zero. We remove the first term  and the
term $|\bar{t}-\bar{s}|^2$ of the right-hand side above. Then we let
$\eps \to 0$ and since $(\bar{t},\bar{x}) \to (t_0,x_0)$ we finally
have
$$-\frac{\partial \phi}{\partial t} (t_0,x_0) - \mathcal L \phi (t_0,x_0) - \kappa |w(t_0,x_0)| - \kappa |\sigma(t_0,x_0) \nabla \phi(t_0,x_0)| \leq 0.$$
Therefore $w$ is a subsolution of the equation
(\ref{eq_diff_sub_super}).
\end{proof}
\medskip

Recall now Lemma 3.8 in \cite{BBP97}.
\begin{lem}
For any $A > 0$, there exists $C > 0$ such that the function
$$\chi(t,x)= \exp \left[(C(T-t) + A) \psi(x) \right]$$
where
$$\psi(x) = \left[\ln((|x|^2+1)^{1/2})+1 \right]^2,$$
satisfies
$$-\frac{\partial \chi}{\partial t} - \mathcal{L} \chi - \kappa  \chi - \kappa  |\sigma \nabla \chi| > 0, \quad \mbox{in} \ [t_1,T] \times \R^d,$$
where $t_1=T-A/C$.
\end{lem}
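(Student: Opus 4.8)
The plan is a direct computation: substitute the explicit $\chi$ into the differential operator, factor out the strictly positive factor $\chi$, and then choose $C$ large. Write $\chi(t,x)=\exp(\mu(t)\psi(x))$ with $\mu(t)=C(T-t)+A$, so that $\mu'(t)=-C$. The decisive observation is that on the interval $[t_1,T]=[T-A/C,T]$ one has $T-t\in[0,A/C]$, hence $\mu(t)\in[A,2A]$ for \emph{every} choice of $C$; in other words, the restriction to $[t_1,T]$ is precisely what keeps $\mu$ bounded by a constant that does not involve $C$, so that $C$ enters the final expression only through the good term coming from $-\partial_t\chi$.

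Computing the derivatives gives $\partial_t\chi=-C\psi\,\chi$, $\nabla\chi=\mu\,\nabla\psi\,\chi$ and $D^2\chi=\mu\,\chi\,D^2\psi+\mu^2\chi\,\nabla\psi\,(\nabla\psi)^*$. Plugging these into $\mathcal{L}$ and factoring out $\chi>0$, it suffices to show that the bracket
$$C\psi-\frac{\mu}{2}\mbox{Tr}(\sigma\sigma^* D^2\psi)-\frac{\mu^2}{2}\langle\sigma\sigma^*\nabla\psi,\nabla\psi\rangle-\mu\langle b,\nabla\psi\rangle-\kappa-\kappa\mu|\sigma\nabla\psi|$$
is strictly positive on $[t_1,T]\times\R^d$.

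Next I would estimate the derivatives of $\psi$. Setting $\rho=(|x|^2+1)^{1/2}$ and $\ell=\ln\rho+1\geq 1$, so that $\psi=\ell^2$, one finds $\nabla\ell=x/(|x|^2+1)$, $\nabla\psi=2\ell\nabla\ell$ and $D^2\psi=2\nabla\ell(\nabla\ell)^*+2\ell\,D^2\ell$, whence $|\nabla\psi|\leq C\ell\,|x|/(|x|^2+1)$ and $|D^2\psi|\leq C\ell/(|x|^2+1)$. Since $b$ and $\sigma$ are globally Lipschitz, $|b(t,x)|\leq C(1+|x|)$ and $\|\sigma\sigma^*(t,x)\|\leq C(1+|x|)^2$. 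The point is that $\psi$ has been tuned exactly to the linear growth of the coefficients: the ratios $(1+|x|)^2|x|^2/(|x|^2+1)^2$ and $(1+|x|)^2/(|x|^2+1)$ are bounded on $\R^d$, so that each of $\mbox{Tr}(\sigma\sigma^* D^2\psi)$, $\langle\sigma\sigma^*\nabla\psi,\nabla\psi\rangle$, $\langle b,\nabla\psi\rangle$ and $|\sigma\nabla\psi|$ is dominated by $C\ell\leq C\psi$ (using $\ell\geq 1$), while $\kappa\leq\kappa\psi$ trivially.

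Finally, using $\mu\leq 2A$ and $\mu^2\leq 4A^2$ on $[t_1,T]$, every negative term in the bracket is bounded above by $C_2\psi$ for a constant $C_2=C_2(A,\kappa,\mathrm{Lip}(b),\mathrm{Lip}(\sigma),d)$ that does \emph{not} depend on $C$. Hence the bracket is at least $(C-C_2)\psi\geq C-C_2$, since $\psi\geq 1$, and choosing $C>C_2$ makes it strictly positive; multiplying back by $\chi>0$ yields the claim. The only real content is the growth bookkeeping of the previous step — verifying that the logarithmic profile of $\psi$ against the linear growth of $\sigma$ and $b$ leaves all competing terms dominated by $\psi$ itself; once that is in hand, the self-referential appearance of $t_1=T-A/C$ is harmless, precisely because the bound $\mu\in[A,2A]$ holds for any $C$.
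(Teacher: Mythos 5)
Your computation is correct and complete. The paper itself does not prove this lemma: it simply cites Lemma 3.8 of Barles--Buckdahn--Pardoux \cite{BBP97} and remarks that the setting here is the same without the jump part. Your argument fills in exactly the computation that reference contains: factor out $\chi>0$, observe that the restriction to $[t_1,T]=[T-A/C,T]$ forces $\mu(t)=C(T-t)+A\in[A,2A]$ independently of $C$ (this is indeed the one subtle point, since $t_1$ is defined through $C$), check that the logarithmic profile of $\psi$ against the at-most-linear growth of $b$ and $\sigma$ makes every competing term $O(\psi)$ with a constant depending only on $A$, $\kappa$, $d$ and the Lipschitz constants, and then take $C$ large. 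All the derivative formulas and the boundedness of the ratios $(1+|x|)^2/(|x|^2+1)$ and $(1+|x|)^2|x|^2/(|x|^2+1)^2$ are right. One cosmetic slip: the quadratic term $\frac{\mu^2}{2}\langle\sigma\sigma^*\nabla\psi,\nabla\psi\rangle$ is bounded by $C\ell^2=C\psi$ rather than by $C\ell$ as you state for the other three terms; since the domination you actually need is by $C_2\psi$ and $\ell\le\ell^2=\psi$, this changes nothing. What your route buys over the paper's is a self-contained verification; what it costs is nothing, since the estimate is elementary.
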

\begin{proof}
The proof of this result is given in \cite{BBP97}. Note that here,
w.r.t. the setting of this latter article, we have the same
assumptions without the jump process, i.e. $B \equiv 0$. Now the
function $\chi$ is positive therefore it is a supersolution of
(\ref{obs_pb}) with $f(t,x,y,\sigma z) = \kappa y + \kappa |\sigma
z|$ and $h \equiv 0$.
\end{proof}
\bigskip

The rest of the proof of Theorem \ref{thm_uniq_visc_sol} is the same
as in \cite{BBP97}. For any $\al > 0$,
$$|u(t,x)-v(t,x)| \leq \al \chi(t,x) , \ \mbox{ in } [t_1,T] \times \R^d.$$
The sketch of the proof is the following.
\begin{itemize}
    \item Using the growth condition on $u$ and $v$, $\lim_{n \to + \infty} |u-v|(t,x)/\chi(t,x) = 0$ uniformly for $t \in [t_1,T]$, for some $A >0$. This implies that $|u-v| - \al \chi$ is bounded from above in $[t_1,T] \times \R^d$. Hence
    $$M=\max_{[t_1,T] \times \R^d} (|u-v| - \al \chi)e^{K(T-t)}$$
    is achieved at some point $(t_0,x_0)$. Without loss of generality we may assume that $w(t_0,x_0) = (u-v)(t_0,x_0) > 0$.
    \item This means that the function $w-\phi$ has a global maximum point at $(t_0,x_0)$, where
    $$\phi(t,x) = \al \chi(t,x) + (w-\al \chi)(t_0,x_0) e^{K(t-t_0)}.$$
    We use the fact that $w$ is a subsolution of
    (\ref{eq_diff_sub_super}), i.e.,
    $$-\frac{\partial \phi}{\partial t}(t_0,x_0) - \mathcal{L} \phi(t_0,x_0) - \kappa |w(t_0,x_0)| - \kappa |\sigma(t_0,x_0) \nabla \phi(t_0,x_0)|\leq 0.$$
    But since $w(t_0,x_0)=|w(t_0,x_0)|$, the left-hand side is
        $$\al \left[ -\frac{\partial \chi}{\partial t}(t_0,x_0) - \mathcal{L} \chi(t_0,x_0) - \kappa |\chi(t_0,x_0)| - \kappa |\sigma(t_0,x_0) \nabla \chi(t_0,x_0)|) \right].$$
        \item This leads to a contradiction. Then $t_0=T$ and since $w(T,x)=0$, we have $|w(t,x)| - \al \chi(t,x) \leq 0$
        on $[t_1,T] \times \R^d$. Letting $\al$ tending to zero, we obtain $u=v$ in $[t_1,T] \times \R^d$.
        \item Repeat recursively this argument on $[t_2,t_1]$ with $t_2 = (t_1-A/C)^+$, etc., we obtain $u=v$ on $[0,T] \times \R^d$.
\end{itemize}

\end{document}